\numberwithin{equation}{section}
\newtheorem{prop}{Proposition}[section]
\newtheorem{lem}[prop]{Lemma}
\newtheorem{ddd}[prop]{Definition}
\newtheorem{theorem}[prop]{Theorem}
\newtheorem{cor}[prop]{Corollary}
\newcommand{\im}{\mathop{\mbox{\rm Im}}}
\newcommand{\vol}{\mathop{\mbox{\rm vol}}}
\newcommand{\dom}{\mathop{\rm dom}}
\newcommand{\spc}{\mathbf A}
\newcommand{\spcb}{\mathbf B}
\newcommand{\surg}{\mathcal S}
\newcommand{\ins}{\mathop{\it Inv}_{\alpha_0}\nolimits}
\newcommand{\Pj}{{\mathcal P}}
\newcommand{\T}{{\mathscr T}}
\newcommand{\Tl}{{\mathscr T}^{\times}}
\newcommand{\Tlp}{{\mathscr T}^{\times,s}}
\newcommand{\id}{\mathop{\rm id}\nolimits}
\newcommand{\Tr}{{\rm Tr}}
\newcommand{\spfl}{\mathop{\rm sf}\nolimits}
\newcommand{\Hi}{{\mathcal H}}
\newcommand{\inv}{\mathcal I}
\newcommand{\Y}{{\mathcal Y}}
\newcommand{\Yl}{{\mathcal Y}^{\times}}
\newcommand{\Ylp}{{\mathcal Y}^{\times,s}}
\newcommand{\N}{{\mathcal N}}
\newcommand{\Si}{{\mathcal S}}
\newcommand{\R}{{\mathcal R}}
\newcommand{\ideal}{{\mathcal I}}
\newcommand{\F}{{\mathcal F}}
\newcommand{\U}{{\mathcal U}}
\newcommand{\D}{{\mathcal D}}
\newcommand{\Li}{{\mathcal L}}
\newcommand{\Ca}{{\mathcal C}}
\newcommand{\A}{{\mathcal A}}
\newcommand{\B}{{\mathcal B}}
\newcommand{\Bi}{{\mathcal B}_{\infty}}
\newcommand{\Ai}{{\mathcal A}_{\infty}}
\newcommand{\C}{C^{\infty}}
\newcommand{\Ok}{\hat \Omega_k}
\newcommand{\Oi}{\hat \Omega_*}
\newcommand{\ra}{\partial}
\newcommand{\ten}{\otimes}
\newcommand{\pl}[1]{\varprojlim\limits_{#1}}
\newcommand{\ve}{\varepsilon}
\newcommand{\ov}{\overline}
\newcommand{\aj}{\dagger}
\DeclareMathOperator{\iu}{i}
\DeclareMathOperator{\supp}{supp}
\DeclareMathOperator{\Ran}{Ran}
\DeclareMathOperator{\sign}{sign}
\DeclareMathOperator{\ind}{ind}
\DeclareMathOperator{\ch}{ch}
\DeclareMathOperator{\Ker}{Ker}
\DeclareMathOperator{\di}{d}
\def\bbbr{{\rm I\!R}} 
\def\bbbn{{\rm I\!N}} 
\def\bbbc{{\rm I\!C}}
\def\bbbq{{\mathchoice {\setbox0=\hbox{$\displaystyle\rm Q$}\hbox{\raise
0.15\ht0\hbox to0pt{\kern0.4\wd0\vrule height0.8\ht0\hss}\box0}}
{\setbox0=\hbox{$\textstyle\rm Q$}\hbox{\raise
0.15\ht0\hbox to0pt{\kern0.4\wd0\vrule height0.8\ht0\hss}\box0}}
{\setbox0=\hbox{$\scriptstyle\rm Q$}\hbox{\raise
0.15\ht0\hbox to0pt{\kern0.4\wd0\vrule height0.7\ht0\hss}\box0}}
{\setbox0=\hbox{$\scriptscriptstyle\rm Q$}\hbox{\raise
0.15\ht0\hbox to0pt{\kern0.4\wd0\vrule height0.7\ht0\hss}\box0}}}}
\def\bbbz{{\mathchoice {\hbox{$\sf\textstyle Z\kern-0.4em Z$}}
{\hbox{$\sf\textstyle Z\kern-0.4em Z$}}
{\hbox{$\sf\scriptstyle Z\kern-0.3em Z$}}
{\hbox{$\sf\scriptscriptstyle Z\kern-0.2em Z$}}}}
\def\bbbc{{\mathchoice {\setbox0=\hbox{$\displaystyle\rm C$}\hbox{\hbox
to0pt{\kern0.4\wd0\vrule height0.9\ht0\hss}\box0}}
{\setbox0=\hbox{$\textstyle\rm C$}\hbox{\hbox
to0pt{\kern0.4\wd0\vrule height0.9\ht0\hss}\box0}}
{\setbox0=\hbox{$\scriptstyle\rm C$}\hbox{\hbox
to0pt{\kern0.4\wd0\vrule height0.9\ht0\hss}\box0}}
{\setbox0=\hbox{$\scriptscriptstyle\rm C$}\hbox{\hbox
to0pt{\kern0.4\wd0\vrule height0.9\ht0\hss}\box0}}}}
\title{Higher $\rho$-invariants\\ and the surgery structure set}
\author{Charlotte Wahl}
\begin{document}
\begin{abstract}
We study noncommutative $\eta$- and $\rho$-forms for homotopy equivalences. We prove a product formula for them and show that the $\rho$-forms are well-defined on the structure set. We also define an index theoretic map from $L$-theory to $C^*$-algebraic $K$-theory and show that it is compatible with the $\rho$-forms. Our approach, which is based on methods of Hilsum--Skandalis and Piazza--Schick, also yields a unified analytic proof of the homotopy invariance of the higher signature class and of the $L^2$-signature for manifolds with boundary.
\end{abstract}

\maketitle
\markright{{\textsc HIGHER $\rho$-INVARIANTS AND THE SURGERY STRUCTURE SET}}

\section{Introduction}

The study of metrics of positive scalar curvature and the study of differentiable structures on closed manifolds have inspired each other in a very fruitful way (see for example the survey \cite{rs}). Metrics of positive scalar curvature on a closed spin manifold $N$ represent elements in the set $\mathrm{Pos}^{\mathrm{spin}}_n(B\pi_1(N))$, $n=\dim N$, while differentiable structures on $N$ (not necessarily spin) can be investigated via the structure set ${\mathcal S}(N)$. 
An important tool in the study of differentiable structures is given by Wall's surgery exact sequence (see \cite[\S 5]{lu} for a survey), which has the form
$$L_{n+1}(\bbbz \pi_1(N)) \to {\mathcal S}(N) \to {\mathcal N}(N) \to L_n(\bbbz\pi_1(N)) \ .$$
The set $\mathrm{Pos}^{\mathrm{spin}}_n(B\pi_1(N))$ can be studied via  Stolz's sequence \cite{st}
$$R_{n+1}^{\mathrm{spin}}(B\pi_1(N)) \to \mathrm{Pos}^{\mathrm{spin}}_n(B\pi_1(N)) \to \Omega^{\mathrm{spin}}_n(B\pi_1(N)) \to R_{n}^{\mathrm{spin}}(B\pi_1(N)) \ .$$
Here we are only interested in the first map of each sequence: Roughly speaking, elements of the groups $L_{n+1}(\bbbz \pi_1(N))$ and $R_{n+1}^{\mathrm{spin}}(B\pi_1(N))$ can be represented by manifolds with boundary (with additional structures) and the maps $L_{n+1}(\bbbz \pi_1(N)) \to {\mathcal S}(N)$ and $R_{n+1}^{\mathrm{spin}}(B\pi_1(N)) \to \mathrm{Pos}^{\mathrm{spin}}_n(B\pi_1(N))$ are built from restriction to the boundary. 

For oriented $N$ information about the sets $L_{n+1}(\bbbz \pi_1(N))$ and ${\mathcal S}(N)$ can be obtained via
the $L^2$-signature $\sign_{(2)}$ and the $L^2$-$\rho$-invariant $\rho_{(2)}$ of Cheeger and Gromov. Namely, there is a commuting diagram \cite{ls,cw}
$$\xymatrix{
L_{n+1}(\bbbz \pi_1(N)) \ar[r] \ar[d]^{\sign-\sign_{(2)}} &{\mathcal S}(N) \ar[d]^{\rho_{(2)}}\\
\bbbr \ar[r]^{=}& \bbbr\ .}$$
 
A similar diagram exists for the positive scalar curvature sequence, with $\sign$ and $\sign_{(2)}$ replaced by the index and the $L^2$-index, respectively, of the spin Dirac operator on a manifold with boundary. However, in this case, the higher index theory for manifolds with boundary developed by Leichtnam and Piazza \cite{lpAPS,lptwisthigh} yields an even more general diagram \cite{lpposscal}. Recall that higher index theory, which generalizes $L^2$-index theory, studies Dirac operators twisted by the Mishenko--Fomenko bundle, which for $N$ is $\Pj:=\tilde N \times_{\pi_1(N)} C^*\pi_1(N)$. The index of such an operator is an element in $K_*(C^*\pi_1(N))$. For operators on manifolds with boundary, the index is only well-defined if the induced boundary operator is invertible. For invertible Dirac operators on closed manifolds there exists a higher $\eta$-invariant, which is a noncommutative differential form. The higher Atiyah--Patodi--Singer index theorem proven by Leichtnam and Piazza allows to show that the following diagram commutes. (To be precise, the framework of Leichtnam and Piazza uses the reduced $C^*$-algebra. The analogue for the maximal $C^*$-algebra follows from \cite{wazyl}.)
$$\xymatrix{
R_{n+1}^{\mathrm{spin}}(B\pi_1(N)) \ar[r]\ar[d]^{-\ind} & \mathrm{Pos}^{\mathrm{spin}}_n(B\pi_1(N))\ar[d]^{\rho}\\
K_*(C^*\pi_1(N))\ar[r]^-{\ch} &\Omega_*\Bi/\ov{\Omega_*^{<e>}\Bi + \dots}} $$ 
Here $\Omega_*\Bi$ is an algebra of noncommutative differential forms associated to the algebra $\Bi$, where $\Bi$ is a suitable Fr\'echet completion of $\bbbc\pi_1(N)$. The Karoubi Chern character $\ch$ is a generalization of the Chern character in differential geometry. The higher $\rho$-invariant is the ``delocalized'' part of the higher $\eta$-invariant, i. e. the part which survives after dividing out -- besides supercommutators and exact forms -- the subspace $\Omega_*^{<e>}\Bi$ of forms ``localized at the identity''. The map $\ind$ is the higher Atiyah--Patodi--Singer index mentioned before.  

The higher $\rho$-invariant for the signature operator twisted by $\Pj$ is in general not defined since the operator is in general not invertible. This is the reason why an analogue of the previous diagram for the surgery sequence is not straightforward. In this paper we will construct such an analogue: We define a map $\rho^{\mathcal S}:{\mathcal S}(N) \to  \Omega_*\Bi/\ov{\Omega_*^{<e>}\Bi + \dots}$ and an index theoretic map $\sign^L:L_{n+1}(\bbbz\pi_1(N)) \to K_*(C^*\pi_1(N))\ten\bbbz[\frac 12]$ and show that they are compatible with each other. Conjecturally, this latter map agrees (possibly up to a constant factor) with the well-known map defined using quadratic forms \cite{ro}.

Our result is related to the program ``Mapping surgery to analysis'', which was carried out using a different framework in \cite{hr}: Higson and Roe mapped the surgery exact sequence to an analytically defined exact sequence in $K$-theory. Since we deal with homology, heuristically our map $\rho^{\mathcal S}$ retains less information on the structure set, however the precise relation between both maps remains unclear. Already for the Atiyah--Patodi--Singer $\rho$-invariants the relation is complicated; it was established in \cite{hr2}. Piazza and Schick are studying an index theoretic approach to the results of Higson and Roe, which may clarify the relation. In this context they have developped independently some of the present methods \cite{pspers}.
The motivation for our approach lies in fact that (by pairing with cyclic cocycles) we obtain numerical invariants for which there is a product formula. In the case of positive scalar curvature such a product formula was proven and applied in \cite{lpposscal}.
 
We briefly sketch how $\rho^{\mathcal S}$ is constructed:
The definition of $\rho^{\mathcal S}$ is based on methods of Piazza and Schick \cite{ps} which in turn rely on a formalism of Hilsum and Skandalis \cite{hs}: Recall that an element in the structure set ${\mathcal S}(N)$ is represented by an oriented homotopy equivalence $f:M \to N$. Hilsum and Skandalis constructed an invertible selfadjoint Fredholm operator on $\Omega^*_{(2)}(N,\Pj)\oplus \Omega^*_{(2)}(M^{op},f^*\Pj)$ which can be connected to the signature operator by a  path of selfadjoint Fredholm operators. They concluded that the higher index of the signature operator on $N \cup M^{op}$ vanishes, i. e. that higher $K$-theoretic signatures are homotopy invariant. Piazza  and Schick showed that the invertible operator can be constructed as a perturbation of the signature operator by an integral operator. They used it to define an $L^2$-$\eta$-invariant. The generalization of their definition to the higher case is straightforward. We check that the noncommutative $\eta$-form thus obtained does not depend on the many choices involved in its definition and that the associated noncommutative $\rho$-form is in addition independent of the metric. 

Then we show that the $\rho$-form is well-defined on the structure set by generalizing the formalism of Hilsum and Skandalis to manifolds with cylindrical ends (which we consider instead of manifolds with boundary). Our approach also yields new proofs of the homotopy invariance of the higher signatures for manifolds with cylindrical ends, a result originally due to Leichtnam, Lott and Piazza \cite{llp}, and of the $L^2$-signatures \cite{ls}.  

We use the framework developed in \cite{wazyl}, which generalizes higher Atiyah--Patodi--Singer index theory: It allows for more general boundary conditions and more general $C^*$-algebras than the framework of \cite{lpAPS,lptwisthigh}. In particular, an essential tool will be the Atiyah--Patodi--Singer index theorem for Dirac operators over $C^*$-algebras (Theorem 9.4 in \cite{wazyl}, the higher analogue is Theorem 7.6 in \cite{lpAPS} and Theorem 4.1 in \cite{lptwisthigh}). 

{\bf Acknowlegments:} I am grateful to Paolo Piazza and Thomas Schick for information about their project and to Ulrich Bunke and Georges Skandalis for inspiring discussions. Furthermore I thank the referee for carefully reading the manuscript and pointing out some inaccurate technicalities.

\section{Preliminaries}
\label{setting}
In this and the following section we recall and adapt the relevant constructions of \cite{hs,ps}. 
Our results rely on a detailed understanding of them. In fact, we need to consider them for maps of degree one and not only for homotopy equivalences. 

We also need family versions of these constructions: We let the initial data depend on a parameter from a parameter space $B$. Later this will allow to conclude the independence of the construction of all choices.

Let $M$, $N$ be closed oriented manifolds of dimension $n$ and let $\tilde N \to N$ be a Galois covering with deck transformation group $\Gamma$. Let $\A$ be a unital $C^*$-algebra such that there is a unital involutive algebra homomorphism $\pi:\bbbc\Gamma \to \A$. The main examples we have in mind are $\A=C^*\Gamma$ or quotients thereof.

Let $M^{op}$ be $M$ with reversed orientation.

We define the $\A$-vector bundle $\F_N=\tilde N \times_{\Gamma} \A$ and endow it with the canonical $\A$-valued scalar product and the canonical flat hermitian connection, denoted by $d_N$. 

Let $B \subset \bbbr^m$ be a closed cube and let $f:B \times M \to N$ be a smooth map such that for each $b \in B$ the induced map $f_b:M \to N$ is a smooth map of degree one. We set $\F_M:=f^*_b\F_N$ for some $b\in B$ (and hence for all $b \in B$ up to a canonical isomorphism). We also pull back the $\A$-scalar product. The pull back of the hermitian connection $d_N$ is denoted by $d_M$. We write $\F$ for the bundle $\F_N\cup \F_M$ on $N\cup M$.

If $N$ is endowed with a Riemannian metric, then we write $\Omega_{(2)}^*(N, \F_N)=L^2(N,\Lambda^*T^*N \ten\F_N)$ and write $\Omega_{H^1}^*(N, \F_N)$ for the Sobolev space $H^1(N,\Lambda^*T^*N \ten\F_N)$.

Now we assume that $M$, $N$ are endowed with continuous families of Riemannian metrics parametrized by $B$. This defines Hilbert $C(B,\A)$-modules $C(B,\Omega_{(2)}^*(M,\F_M))$ and $C(B,\Omega_{(2)}^*(N, \F_N))$, as well as $C(B,\Omega_{H^1}^*(M,\F_M))$ and $C(B,\Omega_{H^1}^*(N,\F_N))$. Note that these are really spaces of sections since the spaces $\Omega_{(2)}^*(M,\F_M)$ and $\Omega_{(2)}^*(N, \F_N)$ depend on $b$. These spaces are Clifford modules with Clifford multiplication $c(\alpha)\omega =\alpha\wedge \omega-\iota(\alpha)\omega$. Clearly the Clifford multiplication and therefore also the chirality operators $\tau_M$, $\tau_N$ depend on $b\in B$. Recall that $\tau_M=\iu^{n/2}c(\vol_M)$ if $n$ is even and $\iu^{(n+1)/2}c(\vol_M)$ if $n$ is odd \cite[\S 3.2]{bgv}. Here $\vol$ denotes the volume form.

For the signature operator we use the conventions from \cite[\S 3.6]{bgv}. They are different for example from the ones in \cite{hs,ps}. 

Let $\D_M$ and $\D_N$ be the signature operators acting on $C(B,\Omega_{(2)}^*(M,\F_M))$ and $C(B,\Omega_{(2)}^*(N, \F_N))$, respectively. 

For $n$ even it holds that 
$$\D_M=d_M - \tau_M d_M \tau_M \ ,$$ 
and for $n$ odd 
$$\D_M=\tau_M d_M + d_M \tau_M \ .$$ 

The signature operator $\D=\left(\begin{array}{cc} \D_N & 0 \\ 0 & -\D_M  \end{array}\right)$ twisted by $\F$ on $N \cup M^{op}$ acts on the Hilbert $C(B,\A)$-module $$\Hi:=C(B,\Omega_{(2)}^*(N, \F_N))\oplus C(B,\Omega_{(2)}^*(M,\F_M)) \ .$$ 

Similarly we define 
$$\Hi^1:=C(B,\Omega_{H^1}^*(N, \F_N))\oplus C(B,\Omega_{H^1}^*(M,\F_M))\ .$$

We let $\tau$ be the chirality operator of $N \cup M^{op}$, thus 
$$\tau=\left(\begin{array}{cc} \tau_N & 0 \\ 0 & -\tau_M \end{array}\right) \ .$$
A quadratic form $Q$ on $\Hi$ is defined for $\alpha\in \Omega^k(N \cup M)$, $\beta \in \Omega^*(N \cup M)$, $v,w\in \C(N\cup M,\F)$ by
\begin{align*}
Q(\alpha \ten v,\beta \ten w)&:=Q_{N\cup M^{op}}(\alpha \ten v,\beta \ten w)\\
&:=(-1)^{kn+k(k-1)/2}\iu^{-(n+1)/2}\int_{N\cup M^{op}} (\alpha^* \wedge \beta) \langle v,w\rangle_{\F} \ .
\end{align*}
Similarly we have $Q_N$, $Q_M$, $Q_{M^{op}}$ etc.

By \cite[Prop. 3.58]{bgv} $$Q(\alpha,\tau \beta)=\langle \alpha,\beta \rangle \ .$$
The following properties are relevant for the formalism from \cite{hs}: 

If $n$ is even, the operator $D:=\left(\begin{array}{cc} d_N & 0 \\ 0 & d_M \end{array}\right)$ fulfills $Q(D\alpha,\beta)=-Q(\alpha,D\beta)$ and $D^2=0$. 

If $n$ is odd, this holds for the operator $D:=\iu\left(\begin{array}{cc} d_N & 0 \\ 0 & d_M \end{array}\right)$. 

We denote by $A^{\aj}:=\tau A^*\tau$ the adjoint of an operator $A$ with respect to $Q$. Thus $D^{\aj}=-D$.

Let $I=]-1,1[$. For $k \in 8\bbbn$ big enough there is a smooth map $p:B \times I^k \times M \to N$ such that for each $b \in B$ the induced map $p_b:I^k\times M \to N$ is a submersion and such that $p_b(0,x)=f_b(x)$ for each $x \in M$. We assume that $p$ extends to a smooth map from the closure $B \times \ov{I^k}$ to $N$. (We tacitly make this assumption in similar situations.) These properties ensure that the pullback $p^{\star}:=(p_b^{\star})_{b \in B}$ is bounded and adjointable as an operator from $C(B,\Omega_{(2)}^*(N, \F_N))$ to $C(B,\Omega_{(2)}^*(I^k \times M,\F_M))$. Thus we can define the pushforward $p_!:=(p^{\star})^{\aj}=\tau_N (p^{\star})^*\tau_{I^k \times M^{op}}=-\tau_N (p^{\star})^*\tau_{I^k \times M}$. (The adjoint $(p^{\star})^{\aj}$ is taken here with respect to $Q_N$ and $Q_{I^k \times M^{op}}$.)

Let $v \in C(B,\Omega_c^k(I^k))$ be a real-valued form with $\int_{I^k}v_b=1$ for all $b\in B$. Then the map
$$T_v(p):C(B,\Omega^*_{(2)}(N,\F_N)) \to C(B,\Omega^*_{(2)}(M,\F_M)),~ \omega \mapsto  \int_{I^k} v \wedge p^{\star}\omega$$
is a bounded adjointable operator between Hilbert $C(B,\A)$-modules. 
The operator is also bounded if we consider it as a map from $C(B,\Omega^*_{H^1}(N,\F_N))$ to $C(B,\Omega^*_{H^1}(M,\F_M))$. We sketch how this can be proven: Since $p$ is a submersion we may choose open sets $U_1 \subset I^k \times M$, $B' \subset B$, $U_2 \subset N$ and coordinate systems on $U_1$ and $U_2$ depending smoothly on $b\in B'$ such that $p_b:U_1 \to U_2$ is well-defined and represented by the restriction of a projection $\bbbr^{k+n} \to \bbbr^n$ for any $b \in B'$. We choose functions $\phi_j \in \C_c(U_j)$ such that $\phi_j$ equals $1$ on an open subset of $U_j$ and $\supp \phi_2$ is contained in the range of $p_b$ for all $b \in B'$. Using that the projection $\bbbr^{k+n} \to \bbbr^n$ induces a continuous map $H_{loc}^1(\bbbr^n) \to H^1_{loc}(\bbbr^{k+n})$, one checks that the operator $\phi_1p^{\star}\phi_2:C(B',\Omega^*_{H^1}(N,\F_N)) \to C(B',\Omega^*_{H^1}(I^k \times M,\F_M))$ is bounded. Now, using partitions of unity on $N, M, B$, we may conclude that $p^{\star}:C(B,\Omega^*_{H^1}(N,\F_N)) \to C(B,\Omega^*_{H^1}(I^k \times M,\F_M))$ is bounded. Furthermore, the embedding $C(B,\Omega^*_{H^1}(I^k\times M,\F_M)) \to C(B,\Omega^*_{(2)}(I^k)) \ten \Omega^*_{H^1}(M,\F_M)$ is bounded and $\int_{I^k} v\wedge: C(B,\Omega^*_{(2)}(I^k)) \to C(B)$ is bounded, as well. Note that all the maps involved are adjointable.

There is a stabilization process: Namely, $T_v(p)=T_{w\wedge v}(1\times p)$ for $1 \times p: I^l \times I^k \times M \to N$ with $w \in C(B,\Omega_c^l(I^l)), l \in 8\bbbn$ as above.

It holds that $T_v(p)(\dom d_N)\subset \dom d_M$ and $T_v(p)d_N=d_MT_v(p)$. Furthermore, $T_v(p)^{\aj}=-\tau_N T_v(p)^* \tau_M$.

Note that here our sign convention differs from the ones used in \cite{hs,ps} (which in general also do not agree with each other). The minus sign occurs since the adjoint $\aj$ is taken with respect to $Q_N$ on the source space and $Q_{M^{op}}$ on the target space of $T_v(p)$.  

Let the submersion $p$ depend on an additional parameter $t \in [0,1]$ such that $p: B \times \ov{I^k} \times M \times [0,1] \to N$ is smooth. Then, as operators from $C(B,\Omega^*(N,\F_N))$ to $C(B,\Omega^*(I^k \times M,\F_M))$
$$p_1^{\star}-p_0^{\star}= d_{I^k\times M} s_p+s_p d_N$$
with $s_p(\alpha)=\int_0^1 i_{\ra_t}p^{\star}\alpha$, see \cite[3.2. Lemme]{hs}. Here $p^{\star}$ is understood as a pullback from $C(B,\Omega_{(2)}^*(N,\F_N))$ to $C(B,\Omega_{(2)}^*(I^k \times M \times [0,1],\F_M))$.

Thus for $\omega \in C(B,\Omega^*(N,\F_N))$ we have in $C(B,\Omega^*(M,\F_M))$
\begin{align*}
(T_v(p_1)-T_v(p_0))(\omega) &= \int_{I^k} v \wedge (d_{I^k\times M} s_p + s_p d_N)(\omega)\\
&=d_M \int_{I^k} v \wedge s_p(\omega) + \int_{I^k} v \wedge s_p (d_N\omega) \ .
\end{align*}

The operator 
$$\omega \mapsto \int_{I^k} v \wedge s_p(\omega)$$
is bounded from $C(B,\Omega^*_{(2)}(N,\F_N))$ to $C(B,\Omega^*_{(2)}(M,\F_M))$. It is also bounded if we consider it as a map from $C(B,\Omega^*_{H^1}(N,\F_N))$ to $C(B,\Omega^*_{H^1}(M,\F_M))$. Furthermore, it maps $\dom d_N$ to $\dom d_M$. We say that $T_v(p_1)$ equals $T_v(p_0)$ modulo boundaries.

Note that any two submersions $p_0$, $p_1$ associated to $f$ as above are (after possibly stabilizing) homotopic through a path of submersions $p_t$ associated to $f$.

\begin{lem}
\label{iso}
If $f$ is a homotopy equivalence, then the map $T_v(p)$ induces an isomorphism from $\Ker d_N/\im d_N$ to $\Ker d_M/\im d_M$.
\end{lem}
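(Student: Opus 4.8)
\emph{Proof plan.}
Heuristically $T_v(p)$ realizes, on twisted de Rham cohomology, the pullback $f^\star\colon H^*(N,\F_N)\to H^*(M,\F_M)$ — because $p$ restricts at $\{0\}\times M$ to $f$, and $\int_{I^k}v\wedge\,\cdot$ is, on cohomology, inverse to the pullback along the projection $I^k\times M\to M$ — and this pullback is an isomorphism since $f$ is a homotopy equivalence. To produce an honest inverse within the present formalism I would proceed as follows. As $B$ is connected the maps $f_b$ are all homotopic, so I fix a smooth homotopy inverse $g\colon N\to M$ of $f_{b_0}$ for one $b_0\in B$ (necessarily of degree one) and, exactly as $p$ was built from $f$, I choose for $l\in 8\bbbn$ large enough a submersion $q\colon\ov{I^l}\times N\to M$ with $q(0,y)=g(y)$ and a real form $w\in C(B,\Omega_c^l(I^l))$ with $\int_{I^l}w_b=1$. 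Since $f\circ q$ restricts at $\{0\}\times N$ to $f\circ g\simeq\id_N$ and $d_N$ is flat, parallel transport identifies $q^\star\F_M=(f\circ q)^\star\F_N$ canonically with the pullback of $\F_N$ from $N$; hence $\eta\mapsto\int_{I^l}w\wedge q^\star\eta$ defines an operator $T_w(q)\colon C(B,\Omega^*(M,\F_M))\to C(B,\Omega^*(N,\F_N))$ with the same formal properties as $T_v(p)$ — in particular it commutes with the differentials and descends to a map $\Ker d_M/\im d_M\to\Ker d_N/\im d_N$.

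Next I would show that the composition $T_w(q)\circ T_v(p)$ is again an operator of the same type. Put $\Phi:=\id_{I^k}\times q\colon I^k\times I^l\times N\to I^k\times M$, which is a submersion because $q$ is. Naturality of integration over the fibre under base change gives $q^\star\int_{I^k}\,\cdot=\int_{I^k}\Phi^\star\,\cdot$; combining this with the projection formula and with $\Phi^\star p^\star\omega=(p\circ\Phi)^\star\omega$ one obtains the operator identity $T_w(q)\circ T_v(p)=T_{v\wedge w}(r)$, where $r\colon I^{k+l}\times N\to N$, $r(s,t,y):=p(s,q(t,y))$, is a submersion, $v\wedge w\in C(B,\Omega_c^{k+l}(I^{k+l}))$ has total integral $1$, and $r(0,0,y)=p(0,g(y))=f(g(y))$, so $r$ restricts at the origin of the cube to the map $f\circ g\simeq\id_N$. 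All signs that would a priori occur are trivial because $k,l\in 8\bbbn$. Symmetrically $T_v(p)\circ T_w(q)=T_{w\wedge v}(r')$ with $r'(t,s,x)=q(t,p(s,x))$ restricting at the origin to $g\circ f\simeq\id_M$.

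It then remains to identify $T_{v\wedge w}(r)$ with the identity on $\Ker d_N/\im d_N$. The identity operator of $C(B,\Omega^*(N,\F_N))$ is $T_u(\pi)$, where $\pi\colon I^j\times N\to N$ is the projection and $u\in C(B,\Omega_c^j(I^j))$ has $\int_{I^j}u_b=1$, since $\int_{I^j}u\wedge\pi^\star\omega=\omega$; this is unchanged under stabilization, by $T_v(p)=T_{w\wedge v}(1\times p)$, so I may take both cubes to have the same dimension. Because $f\circ g$ is homotopic to $\id_N$, combining the construction of $p$ carried out over such a homotopy with the remark preceding the lemma produces, after stabilization, a smooth path of submersions $B\times\ov{I^j}\times N\times[0,1]\to N$ joining $r$ to $\pi$. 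Applying the ``modulo boundaries'' formula to that path, one gets for every $\omega\in\Ker d_N$ that $T_{v\wedge w}(r)\omega-T_u(\pi)\omega=d_N(A\omega)\in\im d_N$ for a suitable bounded operator $A$, so $T_w(q)\circ T_v(p)$ induces the identity on $\Ker d_N/\im d_N$; likewise $T_v(p)\circ T_w(q)$ induces the identity on $\Ker d_M/\im d_M$. Hence the map induced by $T_v(p)$ is an isomorphism, with inverse induced by $T_w(q)$.

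The step I expect to cost the most work is joining $r$ to the projection $\pi$ through a path of submersions: one has to upgrade the homotopy $f\circ g\simeq\id_N$ to a path in the space of submersions (stabilizing as needed), which repeats in a family the construction that produced $p$ from $f$. The reduction of $T_w(q)\circ T_v(p)$ to a single operator $T_{v\wedge w}(r)$ is routine but sign-sensitive — here the hypotheses $k,l\in 8\bbbn$ make all potential signs vanish — and the flat identification of $q^\star\F_M$ with a pullback of $\F_N$ must be set up coherently so that these composition identities hold on the nose.
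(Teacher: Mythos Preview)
Your proposal is correct and follows essentially the same route as the paper: choose a homotopy inverse $g$, build the submersion $q$ and the operator $T_w(q)$, compute the composite as a single $T_{w\wedge v}(r)$ for a submersion $r$ restricting to $f\circ g$ (resp.\ $g\circ f$) at the origin, and then use the ``equal modulo boundaries'' argument along a homotopy of submersions to the projection. The paper's proof is terser but structurally identical; your extra care about identifying $q^\star\F_M$ with the pullback of $\F_N$ via flatness and about the signs (trivial because $k,l\in 8\bbbn$) is appropriate elaboration rather than a different approach.
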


\begin{proof}
For notational simplicity we suppress the dependence on $b$.

Let $g:N \to M$ be a smooth map such that $g \circ f$ and $f \circ g$ are homotopic to the identity and let $q:I^l \times N \to M$ be a submersion as above with $q(0,x)=g(x)$. Then, with $\id \times q:I^k \times(I^l \times N) \to I^{k}\times M$, 
\begin{align*}
T_{w}(q)T_v(p)\alpha&=\int_{I^{l}} w \wedge q^{\star}(\int_{I^k} v \wedge p^{\star} \alpha)\\
&=\int_{I^{k+l}} w\wedge v \wedge (\id \times q)^{\star}p^{\star}\alpha\\
&=T_{w \wedge v}(p \circ (\id \times q))\alpha \ .
\end{align*}

Since $(p \circ (\id \times q))(0,0,x)=(f\circ g)(x)$ and $f\circ g$ is homotopic to the identity, the submersion $p \circ (\id \times q)$ is homotopic to the identity through submersions (after possibly stabilizing). Thus $T_w(q)T_v(p)$ equals the identity modulo boundaries. Analogously, $T_v(p)T_w(q)$ equals the identity modulo boundaries. The proof is now straightforward.
\end{proof}

As before, in the following lemma we allow $f$ to be map of degree one.

\begin{lem}
\label{lemY}
Recall that $\dim M=n$. It holds:
\begin{enumerate}
\item There is a bounded operator $Y$ of degree $-1$ on $C(B,\Omega^*_{(2)}(N,\F_N))$ such that $$1+ T_v(p)^{\aj}T_v(p)=d_NY + Yd_N \ .$$ Furthermore, $(-1)^{n+1}Y^{\aj}=Y$, $Y(\dom d_N) \subset \dom d_N$, and $Y$ is bounded as an operator on $C(B,\Omega^*_{H^1}(N,\F_N))$.
\item Any convex linear combination of operators $Y$ fulfilling (1) fulfills (1).
\end{enumerate}
\end{lem}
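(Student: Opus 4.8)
The plan is to write $Y$ down explicitly by combining Hodge theory for the de Rham complex of $N$ twisted by $\F_N$ with the intertwining properties of $T_v(p)$, and to use the hypothesis $\deg(f_b)=1$ only to control what $S:=T_v(p)^{\aj}T_v(p)$ does on cohomology. First I would record the soft properties of $S$. Taking $\aj$-adjoints in $T_v(p)d_N=d_MT_v(p)$ and using $d_N^{\aj}=(-1)^{n+1}d_N$, $d_M^{\aj}=(-1)^{n+1}d_M$ (which follow from the relations $Q(D\alpha,\beta)=-Q(\alpha,D\beta)$ recorded above) gives $T_v(p)^{\aj}d_M=d_NT_v(p)^{\aj}$, so $S$ commutes with $d_N$. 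Since $\tau_N^2=\tau_M^2=\id$ and the chirality operators are selfadjoint one has $A^{\aj\aj}=A$ for every operator, hence $S^{\aj}=S$ and $R:=1+S$ is $\aj$-selfadjoint. Moreover $S$ has degree $0$, maps $\dom d_N$ into itself, and is bounded on $C(B,\Omega^*_{(2)}(N,\F_N))$ and on $C(B,\Omega^*_{H^1}(N,\F_N))$ — the latter because $T_v(p)$ and, by the analogous argument applied to its Hilbert-space adjoint, $T_v(p)^{\aj}$ are bounded on the $H^1$-spaces.

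Next I would invoke Hodge theory for this complex. Since $\dim_{\A}(\Ker d_N/\im d_N)=\dim_{\A}H^*(N,\F_N)$ is a topological invariant and $B$ is a compact cube over which the metrics vary continuously, one obtains uniformly in $b\in B$ that $\im d_N$ is closed, that $C(B,\Omega^*_{(2)}(N,\F_N))$ is the orthogonal direct sum of harmonic forms, $\im d_N$ and $\im d_N^*$, and that there are bounded operators $P$ (projection onto harmonic forms) and $G$ (Green operator), smoothing and bounded also on the $H^1$-spaces, with $Pd_N=d_NP=0$, $Gd_N=d_NG$, $Gd_N^*=d_N^*G$ and $(d_Nd_N^*+d_N^*d_N)G=\id-P$. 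Put
$$Y_0:=Rd_N^*G+d_N^*GRP\ .$$
A short computation using these identities together with $Rd_N=d_NR$ and $d_N^*d_NG=(\id-P)-d_Nd_N^*G$ gives $d_NY_0+Y_0d_N=R-PRP$. Thus everything reduces to the single identity $PRP=0$, i.e. to the statement that $S$ induces $-\id$ on $\Ker d_N/\im d_N$.

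This last point is the only genuinely geometric step, and the one I expect to be the main obstacle; it is where $\deg(f_b)=1$ enters. Since $p_b$ is homotopic to $f_b$ followed by the projection $I^k\times M\to M$ and $\int_{I^k}v_b=1$, the operator $T_v(p)$ induces $f_b^*$ on cohomology. The pairing induced by $Q_N$ on $H^*(N,\F_N)$, resp. by $Q_{M^{op}}$ on $H^*(M,\F_M)$, is the twisted Poincaré pairing (perfect over $\A$, in the parametrised sense), the second one being the negative of the Poincaré pairing of $M$ because $M^{op}$ carries the opposite fundamental class; hence $T_v(p)^{\aj}$, being the $\aj$-adjoint of $T_v(p)$, induces $-f_{b!}$, the Umkehr map of $f_b$. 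The projection formula for cap products with local coefficients gives $f_{b!}f_b^*=\deg(f_b)\cdot\id=\id$, so $S$ induces $-\id$ on $H^*(N,\F_N)\cong\Ker d_N/\im d_N$, whence $PRP=P+PSP=P-P=0$. Only $\deg f_b=1$ is used here, not that $f_b$ is a homotopy equivalence; the sign $-1$ is precisely what makes $1+S$, rather than $1-S$, exact. The main care in this step goes into setting up twisted Poincaré duality over the $C^*$-algebra $\A$ in the family sense and into identifying the cohomology map of the $\aj$-adjoint with the Umkehr map while tracking the orientation reversal.

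It remains to arrange the symmetry and to prove (2). The operator $Y_0$ has all the properties asserted for $Y$ except possibly $(-1)^{n+1}Y_0^{\aj}=Y_0$. Since $R^{\aj}=R$, a short computation shows that $(-1)^{n+1}Y_0^{\aj}$ also solves $d_NY+Yd_N=R$; and because $\tau_N$ commutes with $d_Nd_N^*+d_N^*d_N$, hence with $G$ and $P$, the operator $(-1)^{n+1}Y_0^{\aj}$ is again of degree $-1$, bounded on the $L^2$- and $H^1$-spaces, and preserves $\dom d_N$. Therefore $Y:=\tfrac12\bigl(Y_0+(-1)^{n+1}Y_0^{\aj}\bigr)$ satisfies $(-1)^{n+1}Y^{\aj}=Y$ together with all the remaining requirements, which proves (1). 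Part (2) is then immediate: $Y\mapsto d_NY+Yd_N$ and $Y\mapsto Y^{\aj}$ are linear, and boundedness (on both spaces), degree and the domain condition all pass to convex — indeed affine — combinations.
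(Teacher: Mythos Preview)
Your approach has a genuine gap at the Hodge--theory step. For the de Rham complex of a closed manifold twisted by a flat Hilbert $\A$-module bundle $\F_N$, the range of $d_N$ is in general \emph{not} closed, so neither the harmonic projector $P$ nor a bounded Green operator $G$ exist on the Hilbert $\A$-module $C(B,\Omega^*_{(2)}(N,\F_N))$. The sentence ``$\dim_{\A}(\Ker d_N/\im d_N)$ is a topological invariant'' already presupposes that this quotient is a finitely generated projective $\A$-module, which fails in basic examples: for $N=T^n$, $\Gamma=\bbbz^n$ and $\A=C^*\bbbz^n\cong C(T^n)$ the fibrewise cohomology jumps at the trivial character and the twisted Laplacian has spectrum $[0,\infty)$ with no gap at $0$. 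The paper itself flags exactly this issue later (see \S\ref{high}), where the closed-range condition for a single degree of $d_M$ is called ``rather strong'' and is imposed as an explicit extra hypothesis. Your argument would only go through under such a hypothesis, which is not part of the lemma and would be far too restrictive for the intended applications.

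This is why the paper's proof takes a completely different route and constructs $Y$ at the chain level, as an explicit sum of chain homotopies with smooth kernels, never invoking harmonic projections. Writing $T_v(p)=S_vp^{\star}$ with $S_v\alpha=\int_{I^k}v\wedge\alpha$, one first compares $S_v^{\aj}S_v$ with $E_v:\alpha\mapsto v\wedge\alpha$ via a homotopy between the two projections $q_1,q_2:I^k\times I^k\times M\to I^k\times M$; this yields $T_v(p)^{\aj}T_v(p)=E_v(p):=p_!(v\wedge p^{\star}\,\cdot\,)$ modulo an explicit boundary. One then passes to a specific submersion $\ov p=\tilde\pi\circ(\id\times f)$ built from an embedding $N\hookrightarrow\bbbr^k$, uses $\deg f=1$ in the form $\int_{I^k\times M}(\id\times f)^{\star}\omega=\int_{I^k\times N}\omega$ to get $E_v(\ov p)=-E_v(\tilde\pi)$, and finally homotopes $\tilde\pi$ to the trivial projection so that $E_v(\tilde\pi)=1$ modulo boundaries. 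Every homotopy here produces a bounded integral operator, hence bounded on both $L^2$ and $H^1$ without spectral assumptions. Your cohomological identification of the action of $T_v(p)^{\aj}T_v(p)$ as $-\id$ is morally the same final step, but the paper realises it on the chain level rather than after passing to cohomology --- which is precisely what is needed over a general $C^*$-algebra.
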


\begin{proof}
For $i=1,2$ define $q_i:I^k \times I^k \times M\to I^k \times M,~(t_1,t_2,x) \mapsto (t_i,x)$.

For $\alpha,\beta \in \Omega^*_c(I^k)$, $\omega \in C(B,\Omega^*(M,\F_M))$ it holds, for example, $$q_{1!}(\alpha \wedge \beta \wedge \omega)=(\int_{I^k} \alpha) \wedge \beta \wedge \omega \ .$$

The adjoint of the map $$S_v:C(B,\Omega_{(2)}^*(I^k \times M,\F_M)) \to C(B,\Omega^*_{(2)}(M,\F_M)),~\alpha \mapsto \int_{I^k} v \wedge \alpha$$ 
equals $S_v^*(\beta) = \tau_{I^k} v \wedge  \beta$,
hence $S_v^{\aj}(\beta)=v \wedge \beta$.
Thus $$S_v^{\aj}S_v \alpha=v \wedge \int_{I^k}v \wedge \alpha=q_{2!}((q_1^{\star}v)\wedge (q_2^{\star}v) \wedge q_2^{\star}\alpha) \ .$$
Define on $C(B,\Omega_{(2)}^*(I^k \times M,\F_M))$
\begin{align*}
E_v: \alpha &\mapsto q_{2!}((q_1^{\star}v)\wedge (q_2^{\star}v) \wedge (q_1^{\star}\alpha))\\
& \quad =v \wedge \alpha\ .
\end{align*} 
Choose a homotopy of submersions $q:I^k \times I^k \times M \times [1,2] \to I^k \times M$ from $q_1$ to $q_2$ such that $q(I^k \times I^k \times \{x\}\times [1,2]) \subset I^k \times \{x\}$  for $x \in M$. Then for $\alpha \in C(B,\Omega^*_c(I^k \times M,\F_M))$
$$(S_v^{\aj}S_v-E_v)\alpha=q_{2!}\bigl((q_1^{\star}v)\wedge (q_2^{\star}v) \wedge (d_{I^k \times I^k \times M}s_{q}(\alpha)+s_{q}(d_{I^k \times M}\alpha))\bigr) \ .$$ 
On $C(B,\Omega^*_{(2)}(N,\F_N))$ define $E_v(p):\alpha \mapsto p_!(v \wedge p^{\star}\alpha)$. Since $T_v(p)=S_v p^{\star}$, we get 
\begin{align*}
\lefteqn{(T_v^{\aj}(p)T_v(p)-E_v(p))\alpha}\\
&=p_!(S_v^{\aj}S_v-E_v)p^{\star} \alpha \\
&=d_M (p_!q_{2!}\bigl((q_1^{\star}v)\wedge (q_2^{\star}v) \wedge s_{q}(p^{\star}\alpha)\bigr)+ p_!q_{2!}\bigl((q_1^{\star}v)\wedge (q_2^{\star}v) \wedge s_{q}(p^{\star} d_N\alpha)\bigr) \ .
\end{align*} 

A particular submersion $\ov p$ associated to $f$ can be constructed as follows: For $k\in 8\bbbn$ large there is an embedding $j:N \to \bbbr^k$. One may assume that $j(N) + \ov{I^k} \subset U$, where $U$ is a tubular neighbourhood of $j(N)$. It comes with a projection $\pi:U \to j(N)$. One defines (identifying $j(N)$ and $N$)
$$\ov{p}: I^k \times M \to N,~ \ov{p}(t,x)=\pi(f(x)+t) \ .$$

Since $p$ and $\ov{p}$ are homotopic through submersions (after possibly stabilizing), it holds  $E_v(\ov{p})=E_v(p)$ up to boundaries. 

It remains to study $E_v(\ov{p})$: For that aim define $\tilde \pi: I^k \times N \to N, \tilde \pi(t,x)=\pi(x+t)$. Thus $\ov{p}=\tilde \pi\circ (\id \times f)$. If $\alpha,\beta \in C(B,\Omega^*_c(I^k \times N,\F_N))$, then, since $f$ has degree one,
$$\int_{I^k \times M}(\id \times f)^{\star}\alpha \wedge (\id \times f)^{\star}\beta=\int_{I^k \times M}(\id \times f)^{\star}(\alpha \wedge \beta)=\int_{I^k \times N} \alpha \wedge \beta \ .$$
Thus $Q_{I^k \times M}((\id \times f)^{\star}\alpha,(\id \times f)^{\star}\beta)=Q_{I^k \times N}(\alpha,\beta)$. 
For $\alpha,\beta \in C(B,\Omega^*(N,\F_N))$ it follows that 
\begin{align*}
Q_N(\ov{p}_!(v \wedge \ov{p}^{\star}\alpha),\beta)&=Q_{I^k \times M^{op}}(v \wedge (\id\times f)^{\star} \tilde \pi^{\star}\alpha,(\id\times f)^{\star} \tilde \pi^{\star}\beta)\\
&=-Q_{I^k \times M}(v \wedge (\id\times f)^{\star} \tilde \pi^{\star}\alpha,(\id\times f)^{\star} \tilde \pi^{\star}\beta)\\
&=-Q_{I^k \times N}(v\wedge \tilde \pi^{\star}\alpha,\tilde\pi^{\star}\beta)\\
&=-Q_{N}(\tilde \pi_!(v \wedge \tilde\pi^{\star}\alpha),\beta) \ .
\end{align*}
Hence $E_v(\ov{p})=-E_v(\tilde\pi)$. Since (after possibly stabilizing) $\tilde \pi$ is homotopic to the projection $p_2:I^k \times N \to N$ and $E_v(p_2)=1$, the assertion follows. 

It remains to note that if $Y$ fulfills the equation in (1), then $\frac 12(Y + (-1)^{n+1}Y^{\aj})$ fulfills the equation as well.
\end{proof}

Up to this point the material was from \cite{hs}. Now we turn to \cite[\S 9]{ps}.

\begin{ddd}
An operator $B \in B(\Hi)$ is $\ve$-spectrally concentrated with respect to $\D$ if for any function $\psi \in \C_c(\bbbr)$ with $\supp \psi \cap [-\ve,\ve]=\emptyset$ it holds that $\psi(\D)B=B\psi(\D)=0$.

We say that $B$ is spectrally concentrated with respect to $\D$ if there is $\ve \in (0,\infty)$ such that $B$ is $\ve$-spectrally concentrated with respect to $\D$.
\end{ddd}

Clearly every spectrally concentrated operator is also bounded as an operator on $\Hi^1$. Furthermore, if $B$ is spectrally concentrated with respect to $\D$, then there is $\phi \in \C_c(\bbbr)$ with $\phi(\D)B=B\phi(\D)=B$. In particular, $B$ is smoothing since $\phi(\D)$ is a smoothing operator.

Let $(\phi_{\ve})_{\ve \in (0,\infty]}$ be a family of even functions in $\C(\bbbr)$ with values in $[0,1]$ such that $\supp \phi_{\ve} \subset (- \ve/2,\ve/2)$ and $\phi_{\ve}(x)=1$ for $x \in [-\ve/4,\ve/4]$. Note that $\phi_{\infty}=1$. Assume that the map $[0,\infty) \to \C_{loc}(\bbbr), ~t \mapsto \phi_{1/t}$ is smooth. 

An example can be constructed as follows: Let $\phi \in \C_c(\bbbr)$ be even such that $\supp \phi \subset (-1/2,1/2)$ and $\phi(x)=1$ for $x \in [-1/4,1/4]$. For $\ve>0$ set $\phi_{\ve}(x)=\phi(\frac{x}{\ve})$. (We need the flexibility of the general situation when we prove the independence of our definitions of the choice of $\phi_{\ve}$.)
  
Set 
$$\T_{v,\ve}(p)=\phi_{\ve}(\D_M)T_v(p) \phi_{\ve}(\D_N):C(B,\Omega^*_{(2)}(N,\F_N)) \to C(B,\Omega^*_{(2)}(M,\F_M)) \ .$$

For $\ve \in (0,\infty)$ the operator $\T_{v,\ve}(p)$ is a compact operator depending continuously on $\ve$ in the norm topology. For $\ve \to \infty$ it converges to $\T_{v,\infty}(p)=T_v(p)$ in the strong $*$-operator topology.

Thus for any compact subset $J \subset (0,\infty]$ the family $(\T_{v,\ve}(p))_{\ve \in J}$ defines an adjointable bounded operator from $C(J \times B,\Omega^*_{(2)}(N,\F_N))$ to $C(J\times B,\Omega^*_{(2)}(M,\F_M))$, for which we write $\T_v(p)$. Is also bounded and adjointable as an operator from 
$C(J \times B,\Omega^*_{H^1}(N,\F_N))$ to $C(J\times B,\Omega^*_{H^1}(M,\F_M))$. 

\begin{lem}
\label{lemYeps}
With $\dim M=n$ it holds:
\begin{enumerate}
\item $\T_{v,\ve}(p)(\dom d_N)\subset \dom d_M$; $\T_{v,\ve}(p)d_N=d_M\T_{v,\ve}(p)$.
\item If $f$ is a homotopy equivalence, then $\T_{v,\ve}(p)$ induces an isomorphism from $\Ker d_N/\im d_N$ to $\Ker d_M/\im d_M$.
\item There is a bounded operator $\Y$ of degree $-1$ on $C(J\times B,\Omega^*_{(2)}(N,\F_N))$ such that
$$1+\T_v(p)^{\aj}\T_v(p)=d_N\Y + \Y d_N \ .$$
Furthermore, $(-1)^{n+1}\Y^{\aj}=\Y$, $\Y(\dom d_N) \subset \dom d_N$, and $\Y$ is bounded as an operator on $C(J\times B,\Omega^*_{H^1}(N,\F_N))$.
\item The operator $\Y$ decomposes as $\Y' + \Y''$ such that $\Y'_{\ve}$ is $\ve$-spectrally concentrated for each $\ve \in J$ and $\Y''$ commutes with $\D_N^2$.
\item Any convex linear combination of operators $\Y$ fulfilling (3) and (4) fulfills (3) and (4).
\end{enumerate}
\end{lem}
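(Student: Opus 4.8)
The plan is to derive all five parts from the defining formula $\T_{v,\ve}(p)=\phi_{\ve}(\D_M)T_v(p)\phi_{\ve}(\D_N)$, from Lemmata~\ref{iso} and~\ref{lemY}, and from elementary properties of the spectral cut-offs; I work with $n$ even throughout, the odd case differing only in signs, and suppress the parameters $\ve\in J$ and $b\in B$, the dependence on them being continuous and adjointable exactly as for $\T_v(p)$ itself. First I would record what is needed about the cut-offs. Writing $\delta_N:=d_N^*$, the operator $\D_N^2=d_N\delta_N+\delta_Nd_N$ is the Hodge Laplacian of $N$; it commutes with $d_N$ and with $\tau_N$, and similarly over $M$. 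Since $\phi_{\ve}$ is even, $\phi_{\ve}(\D_N)$ is a function of $\D_N^2$, hence commutes with $d_N$ on $\dom d_N$ and with $\tau_N$, so $\phi_{\ve}(\D_N)^{\aj}=\phi_{\ve}(\D_N)$; and $\phi_{\ve}$ having compact support, $\phi_{\ve}(\D_N)$ is smoothing, in particular bounded on the $H^1$-module and with range in $\C(N,\F_N)$. The same holds over $M$. Assertion~(1) is then immediate: $\T_{v,\ve}(p)$ has range in $\C(M,\F_M)\subset\dom d_M$, and $\T_{v,\ve}(p)d_N=\phi_{\ve}(\D_M)d_MT_v(p)\phi_{\ve}(\D_N)=d_M\T_{v,\ve}(p)$ using these commutations and $T_v(p)d_N=d_MT_v(p)$.

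For~(2) the key remark is that $\phi_{\ve}(\D_N)$ acts as the identity on $\Ker d_N/\im d_N$, and likewise over $M$. Since $\phi_{\ve}-1$ vanishes near $0$, one may write $\phi_{\ve}(x)-1=x^2g_{\ve}(x^2)$ with $g_{\ve}$ bounded, smooth and compactly supported, so $\phi_{\ve}(\D_N)-1=(d_N\delta_N+\delta_Nd_N)g_{\ve}(\D_N^2)$; if $d_N\omega=0$ then $d_Ng_{\ve}(\D_N^2)\omega=g_{\ve}(\D_N^2)d_N\omega=0$, whence $(\phi_{\ve}(\D_N)-1)\omega=d_N\!\left(\delta_Ng_{\ve}(\D_N^2)\omega\right)\in\im d_N$. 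Therefore $\T_{v,\ve}(p)=\phi_{\ve}(\D_M)\circ T_v(p)\circ\phi_{\ve}(\D_N)$ induces on $\Ker d_N/\im d_N$ the same map as $T_v(p)$, which is an isomorphism by Lemma~\ref{iso}.

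For~(3) and~(4) I would use $\T_{v,\ve}(p)^{\aj}=\phi_{\ve}(\D_N)T_v(p)^{\aj}\phi_{\ve}(\D_M)$ and expand
$$1+\T_{v,\ve}(p)^{\aj}\T_{v,\ve}(p)=1+\phi_{\ve}(\D_N)T_v(p)^{\aj}\phi_{\ve}(\D_M)^2T_v(p)\phi_{\ve}(\D_N),$$
substituting $\phi_{\ve}(\D_M)^2=1-\D_M^2h_{\ve}(\D_M^2)$ and $1=\phi_{\ve}(\D_N)^2+\D_N^2g_{\ve}(\D_N^2)$ with $g_{\ve},h_{\ve}$ bounded, smooth, compactly supported and vanishing near $0$; then writing $\D_N^2=d_N\delta_N+\delta_Nd_N$ and $\D_M^2=d_M\delta_M+\delta_Md_M$, commuting $d$ past $g_{\ve}(\D_N^2)$, $h_{\ve}(\D_M^2)$ and $\phi_{\ve}(\D_N)$, using $T_v(p)d_N=d_MT_v(p)$ together with its $\aj$-adjoint $d_NT_v(p)^{\aj}=T_v(p)^{\aj}d_M$, and applying Lemma~\ref{lemY} to the term $\phi_{\ve}(\D_N)(1+T_v(p)^{\aj}T_v(p))\phi_{\ve}(\D_N)$, all terms collect into $1+\T_{v,\ve}(p)^{\aj}\T_{v,\ve}(p)=d_N\Y_{0,\ve}+\Y_{0,\ve}d_N$ with
$$\Y_{0,\ve}=\delta_Ng_{\ve}(\D_N^2)+\phi_{\ve}(\D_N)Y\phi_{\ve}(\D_N)-\phi_{\ve}(\D_N)T_v(p)^{\aj}\,\delta_Mh_{\ve}(\D_M^2)\,T_v(p)\phi_{\ve}(\D_N).$$
This operator has degree $-1$; it maps into $\C$, hence preserves $\dom d_N$; and it is bounded on the $L^2$- and the $H^1$-module, since $\delta_Ng_{\ve}(\D_N^2)=\left[\delta_N(1+\D_N^2)^{-1/2}\right]\left[(1+\D_N^2)^{1/2}g_{\ve}(\D_N^2)\right]$ is a product of bounded operators (the second because $(1+x)^{1/2}g_{\ve}(x)$ is bounded), similarly $\delta_Mh_{\ve}(\D_M^2)$, while $\phi_{\ve}(\D_N),\phi_{\ve}(\D_M),T_v(p),T_v(p)^{\aj}$ are bounded on both. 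As in the last line of the proof of Lemma~\ref{lemY} I would then replace $\Y_{0,\ve}$ by $\Y_{\ve}:=\frac12\bigl(\Y_{0,\ve}+(-1)^{n+1}\Y_{0,\ve}^{\aj}\bigr)$, which still satisfies the identity above and in addition $(-1)^{n+1}\Y^{\aj}=\Y$; this proves~(3). For~(4): $\delta_Ng_{\ve}(\D_N^2)$ and its $\aj$-adjoint commute with $\D_N^2$ (because $\delta_N$, $\tau_Nd_N\tau_N$ and $g_{\ve}(\D_N^2)$ do), so these go into $\Y''$; the remaining two summands of $\Y_{0,\ve}$ and their $\aj$-adjoints all carry a factor $\phi_{\ve}(\D_N)$ on each side, and since $\supp\phi_{\ve}\subset(-\ve/2,\ve/2)$ one has $\psi\phi_{\ve}=0$ whenever $\psi\in\C_c(\bbbr)$ and $\supp\psi\cap[-\ve,\ve]=\emptyset$, so these summands are $\ve$-spectrally concentrated and go into $\Y'$. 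Assertion~(5) is then immediate, as the equation in~(3), the decomposition $\Y=\Y'+\Y''$, the relation $(-1)^{n+1}\Y^{\aj}=\Y$, boundedness on the $H^1$-module with preservation of $\dom d_N$, $\ve$-spectral concentration, and commutation with $\D_N^2$ are all preserved under convex (indeed affine) combinations.

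The conceptual content is contained in the functional-calculus identities above; the main obstacle I anticipate is bookkeeping, namely verifying that the family $(\Y_{\ve})_{\ve\in J}$ genuinely defines a bounded adjointable operator on the Hilbert $C(J\times B,\A)$-module $C(J\times B,\Omega_{(2)}^*(N,\F_N))$ and is bounded on $C(J\times B,\Omega_{H^1}^*(N,\F_N))$ — for which I would argue exactly as for $\T_v(p)=(\T_{v,\ve}(p))_{\ve\in J}$ in the paragraph preceding the lemma, the only delicate point being continuity at $\ve=\infty$, where $\phi_{\ve}(\D_N)$ tends to $1$ only in the strong $*$-topology — together with keeping track of the sign changes in the definitions of $\aj$ and of the signature operator when $n$ is odd.
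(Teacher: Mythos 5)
Your proof is correct, but it takes a genuinely different route from the paper's. The paper follows Piazza--Schick: it first builds a chain homotopy $Z_\ve=\int_0^1 g_\ve(d_M+d_M^*)T_v(p)\phi_{t,\ve}(d_N+d_N^*)\,dt+\int_0^1\phi_{t,\ve}(d_M+d_M^*)T_v(p)g_\ve(d_N+d_N^*)\,dt$ with $T_v(p)-\T_v(p)=d_MZ+Zd_N$, sets $\tilde Y=Y-Z^\aj T_v(p)+\T_v(p)^\aj Z$ to obtain (3), and then obtains (4) by a further, separate construction: it replaces $\tilde Y$ by the combination $Y^1_\ve=\phi_{2\ve}(\D_N)\tilde Y_\ve\phi_{2\ve}(\D_N)$ plus the explicit terms $U^1_\ve=d_N^*\xi_\ve(\D_N)$, $U^2_\ve=d_N^*\zeta_\ve(\D_N)$, symmetrizes, and refers to \cite[Lemma~9.7]{ps} for the verification. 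You instead expand $1+\T_{v,\ve}(p)^\aj\T_{v,\ve}(p)$ directly, inserting $1=\phi_\ve(\D_N)^2+\D_N^2g_\ve(\D_N^2)$ on the left and $\phi_\ve(\D_M)^2=1-\D_M^2h_\ve(\D_M^2)$ in the middle, which after commuting $d$ past the functional calculus and invoking Lemma~\ref{lemY} produces a closed-form $\Y_{0,\ve}$ in which the required decomposition of (4) is already visible term-by-term ($\delta_Ng_\ve(\D_N^2)$ commutes with $\D_N^2$; the two $\phi_\ve(\D_N)(\cdots)\phi_\ve(\D_N)$-sandwiched terms are $\ve$-spectrally concentrated). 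This is cleaner and more self-contained: it avoids the homotopy integral, the intermediate operator $\tilde Y$ that fails (4), the auxiliary functions $\xi_\ve,\zeta_\ve$, and the appeal to \cite{ps}; the price is that the bookkeeping of the substitution has to be carried out by hand, which you do correctly. Your derivations of (1), (2), and (5) agree in substance with what the paper would do, and your remark that continuity at $\ve=\infty$ is only in the strong-$*$ topology is exactly the point the paper also makes for $\T_v(p)$. One small slip: in your argument for (2) you call $g_\ve$ (with $\phi_\ve(x)-1=x^2g_\ve(x^2)$) ``compactly supported''; it is not, since $g_\ve(y)\sim -1/y$ for large $y$. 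Only boundedness (and the boundedness of $(1+y)^{1/2}g_\ve(y)$, which you do use for the Sobolev bound) is actually needed, so this does not affect the argument.
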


\begin{proof}
We only give those arguments that will be needed later, and refer to \cite[Lemma 9.7]{ps} for the omitted details.

For $t \in [0,1]$ set $\phi_{t,\ve}:=(1-t)\phi_{\ve}+t$ and $g_{\ve}(x)=\frac{d}{dt}\phi_{t,\ve}(x)/x=(-\phi_{\ve}(x)+1)/x$. Since $(d_N+d_N^*)g(d_N+d_N^*)$ defines a bounded operator on $C(J \times B,\Omega_{(2)}^*(N,\F_N))$, the operator $g(d_N+d_N^*)$ is bounded from $C(J\times B,\Omega_{(2)}^*(N,\F_N))$ to $C(J \times B,\Omega_{H^1}^*(N,\F_N))$.

With 
\begin{align*}
Z_{\ve}&=\int_0^1 g_{\ve}(d_M+d_M^*) T_v(p) \phi_{t,\ve}(d_N+d_N^*)~dt\\
&\quad +\int_0^1 \phi_{t,\ve}(d_M+d_M^*) T_v(p) g_{\ve}(d_N+d_N^*)~dt
\end{align*}
it holds that
$T_v(p)-\T_v(p)=d_M Z + Z d_N$.

The integral converges in the strong-$*$-topology as an operator from $C(B,\Omega_{(2)}^*(N,\F_N))$ to $C( B,\Omega_{H^1}^*(M,\F_M))$. Thus we get a bounded operator $Z:C(J \times B,\Omega_{(2)}^*(N,\F_N)) \to C(J \times B,\Omega_{H^1}^*(M,\F_M))$. 

If $Y$ is as in the previous lemma, then the operator $\tilde Y=Y-Z^{\aj}T_v(p)+\T_{v}(p)^{\aj}Z$ fulfills the first equation in (3). In general it does not fulfill (4).

Set $\xi_{\ve}=(1-\phi_{2\ve}(x))\phi_{2\ve}(x)/x^2$ and $\zeta_{\ve}(x)=(1-\phi_{2\ve}(x))/x^2$.
Define 
$$Y^1_{\ve}:=\phi_{2\ve}(\D_N)\tilde Y_{\ve}\phi_{2\ve}(\D_N) \ , 
U^1_{\ve} =d_N^*\xi_{\ve}(\D_N) \ ,
U^2_{\ve} =d_N^*\zeta_{\ve}(\D_N) \ .$$
Note that $Y^1_{\ve}$ is $\ve$-spectrally concentrated and that $U^1_{\ve},~U^2_{\ve}$ commute with $\D_N^2$. Furthermore, $U^1,U^2$ are bounded as operators from $C(J \times B,\Omega_{(2)}^*(N,\F_N))$ to $C(J \times B,\Omega_{H^1}^*(N,\F_N))$.

Now set $\Y_{\ve}'=\frac 12(Y^1_{\ve}+(-1)^{n+1}(Y^1_{\ve})^{\aj})$ and $\Y_{\ve}''=\frac 12(U^1_{\ve}+U^2_{\ve}+ (-1)^{n+1}(U^1_{\ve}+U^2_{\ve})^{\aj})$. The arguments in \cite{ps} show that these operators fulfill the claim. 
\end{proof}

\section{The perturbed signature operator}
\label{signop}

The material of this section is essentially from \cite{hs,ps}. In the following we suppress the dependence on $p,v$ from the notation. 

Let $\gamma \omega =(-1)^k\omega$ for $\omega \in \Lambda^kT^*(N\cup M) \ten \F$.
  
For $\alpha \in \bbbr$, $\beta \in [0,1]$ we define on $C(J,\Hi)$

$$\R_{\beta}=\left(\begin{array}{cc} 1 & 0 \\  \beta \T & 1\end{array}\right),\quad 
\Li_{\alpha,\beta}=\left(\begin{array}{cc} 1+\beta^2 \T^{\aj}\T & (1- \iu\alpha  \gamma \Y)\beta \T^{\aj}\\ \beta \T(1 + \iu \alpha  \gamma \Y) & 1 \end{array}\right) \ .$$
If $\dim M=n$ is even, then we define
$$\delta_{\alpha}=\left(\begin{array}{cc} d_N & \iu \alpha \T^{\aj}\gamma \\ 0 &  d_M \end{array}\right) \ ,$$
and if $n$ is odd, we set
$$\delta_{\alpha}=\left(\begin{array}{cc} \iu d_N & -\alpha \T^{\aj}\gamma \\ 0 &  \iu d_M \end{array}\right) \ .$$

One checks that $\delta_{\alpha}^2=0$.

Set $$\Li_{\alpha,\beta}^e:=\Li_{0,\beta}=\left(\begin{array}{cc} 1+\beta^2 \T^{\aj}\T & \beta \T^{\aj}\\ \beta \T & 1 \end{array}\right)$$
and
$$\Li_{\alpha,\beta}^o:=\alpha\beta\left(\begin{array}{cc} 0 & - \iu \Y \T^{\aj}\\  \iu \T\Y & 0 \end{array}\right) \ .$$

Then $\Li_{\alpha,\beta}=\Li_{\alpha,\beta}^e + \gamma \Li_{\alpha,\beta}^o$ and $\gamma$ commutes with $\Li_{\alpha,\beta}^e$ and anticommutes with $\Li_{\alpha,\beta}^o$.

It will be important that the operator $\gamma$ does not appear in $\Li_{\alpha,\beta}^{e/o}$. This property does not hold for the operators defined in \cite{hs,ps}, which is why our definitions here are slightly different. 

The operators $\R_{\beta}$ and $\Li_{\alpha,\beta}$ are bounded. Since $\R_{\beta}$ is  invertible and $\R_{\beta}^{\aj}\R_{\beta}=\Li_{0,\beta}$, the latter operator is invertible for all $\beta$ and the operator $\Li_{\alpha,\beta}$ is invertible for $|\alpha| <\alpha_0$ with $\alpha_0$ small enough.

We define the set 
$$\ins:=\{(\alpha,\beta) \in [-\alpha_0,\alpha_0]\times [0,1]~|~ \beta=1 \mbox{ if } \alpha \neq 0 \} \ .$$

In the following we will always assume that $(\alpha,\beta) \in \ins$. 

Using $\gamma^{\aj}=(-1)^n\gamma$, one gets that $\Li_{\alpha,\beta}^{\aj}=\Li_{\alpha,\beta}$. Thus $$C_{\alpha,\beta}(\omega_1,\omega_2):=Q(\omega_1,\Li_{\alpha,\beta} \omega_2)$$ 
is a quadratic form in the sense of \cite{hs}. 

Using that $\beta=1$ for $\alpha\neq 0$, we get that
$\delta_{\alpha}^{\aj}\Li_{\alpha,\beta}=-\Li_{\alpha,\beta}\delta_{\alpha}$. Thus the operator $\delta_{\alpha}$ is skew-hermitian with respect to $C_{\alpha,\beta}$.

The selfadjoint operator $\Li_{\alpha,\beta}^{-1}\tau$ is compatible with $C_{\alpha,\beta}$ in the sense of \cite{hs}. Thus  $$\Si_{\alpha,\beta}:=(\Li_{\alpha,\beta}^{-1}\tau)|(\Li_{\alpha,\beta}^{-1}\tau)|^{-1}=\tau\Li_{\alpha,\beta} |\tau\Li_{\alpha,\beta}|^{-1}$$ 
is an involution compatible with $C_{\alpha,\beta}$. In the proof of the product formula it will become relevant that the operator $\tau$ cancels out from $|\tau\Li_{\alpha,\beta}|= (\Li_{\alpha,\beta}^*\Li_{\alpha,\beta})^{1/2}$. 

The operator $\delta_{\alpha} - \Si_{\alpha,\beta} \delta_{\alpha} \Si_{\alpha,\beta}$ is selfadjoint with respect to the scalar product 
\begin{align*}
\langle\omega_1,\omega_2\rangle_{\alpha,\beta}:&= C_{\alpha,\beta}(\omega_1,\Si_{\alpha,\beta}\omega_2)=\langle \omega_1, \tau \Li_{\alpha,\beta}\Si_{\alpha,\beta}\omega_2\rangle \\
&=\langle \omega_1, |\tau\Li_{\alpha,\beta}|\omega_2\rangle \ .
\end{align*}
Since $\delta_{\alpha}^2=0$, it holds that 
$$(\delta_{\alpha} - \Si_{\alpha,\beta} \delta_{\alpha} \Si_{\alpha,\beta})^2=-(\delta_{\alpha}+\Si_{\alpha,\beta}\delta_{\alpha}\Si_{\alpha,\beta})^2=-(\Si_{\alpha,\beta}\delta_{\alpha} + \delta_{\alpha}\Si_{\alpha,\beta})^2 \ .$$

If $n$ is even, we set 
$$\hat\D_{\alpha,\beta}=\delta_{\alpha} - \Si_{\alpha,\beta}\delta_{\alpha}\Si_{\alpha,\beta} \ ,$$ and in the odd-dimensional case we define 
$$\hat\D_{\alpha,\beta}:=(-\iu)(\Si_{\alpha,\beta}\delta_{\alpha} + \delta_{\alpha}\Si_{\alpha,\beta}) \ .$$ 
These operators are selfadjoint with respect to $\langle \cdot , \cdot \rangle_{\alpha,\beta}$.

In the proof of \cite[Prop. 1.9]{hs} it was shown that $\delta_{\alpha} - \Si_{\alpha,\beta} \delta_{\alpha} \Si_{\alpha,\beta}$ is invertible for $(\alpha,\beta) \in \ins$ with $\alpha \neq 0$ if $f$ is a homotopy equivalence. Thus, in this case the operator $\hat\D_{\alpha,\beta}$ is invertible.

The operator $\U_{\alpha,\beta}=|\tau\Li_{\alpha,\beta}|^{\frac 12}$ fulfills 
$$\langle\omega_1,\omega_2 \rangle_{\alpha,\beta}=\langle \U_{\alpha,\beta}\omega_1,\U_{\alpha,\beta}\omega_2 \rangle \ .$$
Thus $\tilde \D_{\alpha,\beta}:=\U_{\alpha,\beta}\hat \D_{\alpha,\beta}\U_{\alpha,\beta}^{-1}$ is selfadjoint with respect to the standard scalar product $\langle \cdot ,\cdot \rangle$. 

In the even-dimensional case $\tilde \D_{\alpha,\beta}$ anticommutes with the involution $\tilde \tau_{\alpha,\beta}:=\U_{\alpha,\beta}\Si_{\alpha,\beta}\U_{\alpha,\beta}^{-1}$.

We also set $\tilde \delta_{\alpha,\beta}:=\U_{\alpha,\beta}\delta_{\alpha}\U_{\alpha,\beta}^{-1}$.

With these definitions we get in the even case
$$\tilde \D_{\alpha,\beta}=\tilde \delta_{\alpha,\beta}-\tilde \tau_{\alpha,\beta} \tilde \delta_{\alpha,\beta}\tilde \tau_{\alpha,\beta} \ ,$$
and in the odd case
$$\tilde \D_{\alpha,\beta}=(-\iu)(\tilde \tau_{\alpha,\beta}\tilde \delta_{\alpha,\beta}+ \tilde \delta_{\alpha,\beta}\tilde \tau_{\alpha,\beta}) \ .$$

In the odd case we consider a different operator than the one defined in \cite[Def. 9.13]{ps}. For the relationship between both see the proof of Prop. \ref{L2prop}. The motivation for our choice is that our operator still looks formally like a signature operator.

It holds that $\tilde \tau_{0,0}=\tau$ and $\tilde \delta_{0,0}=D$. Thus $\tilde \D_{0,0}=\D$.

The operator $\tilde \D_{\alpha,\beta}$ is regular and selfadjoint as an unbounded operator on $C(J,\Hi)$ with domain $C(J,\Hi^1)$.
The operators $\Si_{\alpha,\beta}$, $\U_{\alpha,\beta}$, $\tilde \tau_{\alpha,\beta}$ depend continuously on $(\alpha,\beta) \in \ins$ in the norm topology as operators on $C(J,\Hi)$ as well as on the first Sobolev space $C(J,\Hi^1)$. The operator $\tilde \D_{\alpha,\beta}$ depends continuously on $(\alpha,\beta)  \in \ins$ in the norm topology as an operator from $C(J,\Hi^1)$ to $C(J,\Hi)$.

\begin{lem}
Assume that $K \in B(\Hi)$ is symmetric and $\ve$-spectrally concentrated. Then for any $f \in C(\bbbr)$ the operator $f(1+K)-f(1)$ is $\ve$-spectrally concentrated.
\end{lem}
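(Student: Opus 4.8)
The plan is to verify the defining property of $\ve$-spectral concentration directly, reducing the general $f\in\C(\bbbr)$ to polynomials via the continuous functional calculus and then to monomials by linearity, where everything follows from a one-line induction. The one identity that does all the work is the following: if $\psi\in\C_c(\bbbr)$ has $\supp\psi\cap[-\ve,\ve]=\emptyset$, then $\ve$-spectral concentration of $K$ gives $\psi(\D)K=K\psi(\D)=0$, whence
$$\psi(\D)(1+K)=(1+K)\psi(\D)=\psi(\D).$$

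\textbf{Key steps.} First I would fix such a $\psi$ and note, by an immediate induction from the identity above, that $\psi(\D)(1+K)^m=(1+K)^m\psi(\D)=\psi(\D)$ for every integer $m\ge 0$; hence $\psi(\D)\bigl((1+K)^m-1\bigr)=\bigl((1+K)^m-1\bigr)\psi(\D)=0$, and by linearity $\psi(\D)\bigl(p(1+K)-p(1)\bigr)=\bigl(p(1+K)-p(1)\bigr)\psi(\D)=0$ for every polynomial $p$. Next, since $K$ is symmetric, $1+K$ is a self-adjoint element of the $C^*$-algebra $B(\Hi)$, so $\sigma(1+K)$ is a compact subset of $\bbbr$ and $f(1+K)$ is defined by the continuous functional calculus, depending only on $f$ restricted to $\sigma(1+K)$. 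I would pick polynomials $p_j\to f$ uniformly on a compact interval containing both $\sigma(1+K)$ and the point $1$, so that $p_j(1+K)\to f(1+K)$ in norm and $p_j(1)\to f(1)$ as scalars; since $\psi(\D)$ is bounded, passing to the limit in the polynomial identities yields $\psi(\D)\bigl(f(1+K)-f(1)\bigr)=\bigl(f(1+K)-f(1)\bigr)\psi(\D)=0$. As $\psi$ was arbitrary with $\supp\psi\cap[-\ve,\ve]=\emptyset$, this is exactly the statement that $f(1+K)-f(1)$ is $\ve$-spectrally concentrated.

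\textbf{Main obstacle.} There is essentially no real obstacle here; the proof is a routine functional-calculus argument. The only points needing (minimal) care are that $1+K$ is genuinely self-adjoint, so that the continuous functional calculus is available; that the polynomial approximation must be uniform on a neighbourhood of $\sigma(1+K)$ (together with the value at $1$), not merely pointwise; and that the defining property of $\ve$-spectral concentration is preserved under norm limits of operators, which is clear since $\psi(\D)$ is bounded. The Hilbert $C(J,\A)$-module structure plays no role beyond the fact that $B(\Hi)$ is a $C^*$-algebra.
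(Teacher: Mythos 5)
Your proof is correct, and it follows a genuinely different route from the paper's. The paper reduces to $f\in C_0(\bbbr)$, verifies that the set of functions with the desired property is closed under sums, products and uniform limits, and then appeals to density in $C_0(\bbbr)$ of the algebra generated by the resolvent functions $(x\pm\iu\lambda)^{-1}$, checking the generators directly via a Neumann-series expansion of $(1+K\pm\iu\lambda)^{-1}-(1\pm\iu\lambda)^{-1}$ for large $\lambda$. You instead exploit from the outset that $1+K$ is a bounded self-adjoint operator, so its spectrum is a compact subset of $\bbbr$: you prove the statement for monomials by a one-line induction from $\psi(\D)(1+K)=(1+K)\psi(\D)=\psi(\D)$, extend to polynomials by linearity, and pass to arbitrary $f\in\C(\bbbr)$ by uniform polynomial approximation on a compact interval containing $\sigma(1+K)\cup\{1\}$. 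Your argument is shorter and more elementary — it bypasses the $C_0$-reduction, the closure-under-operations lemma, and the (standard but not entirely free) density of the resolvent algebra. The paper's resolvent-and-Neumann-series pattern, on the other hand, recurs almost verbatim in the proof of part (3) of Lemma \ref{changeinvol}, where the operator involved is not self-adjoint but only has $A^e$ self-adjoint, and polynomial functional calculus is not directly available; so the paper's choice buys methodological uniformity across the two lemmas, at the cost of slightly more machinery in the present case.
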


\begin{proof} Since $K$ is bounded, we may assume that $f \in C_0(\bbbr)$.

If the assertion holds for $f,g \in C_0(\bbbr)$, then it clearly also holds for $f+g$. It also holds for $fg$ by 
$$f(1+K)g(1+K)-f(1)g(1)$$
$$=(f(1+K)-f(1))(g(1+K)-g(1))+ f(1)(g(1+K)-g(1)) + (f(1+K)-f(1))g(1) \ .$$ 
Furthermore, if $(f_n)_{n \in \bbbn}$ is a sequence in $C_0(\bbbr)$ converging uniformly to $f$ and the assertion holds for each $f_n$, then it also holds for $f$.

Since for any $\lambda \in \bbbr^*$ the algebra generated by the functions $(x \pm \lambda \iu)^{-1}$ is dense in $C_0(\bbbr)$, it is enough to show that $(1+K \pm \lambda \iu)^{-1}-(1 \pm  \lambda \iu)^{-1}$ are $\ve$-spectrally concentrated. For $\lambda$ large this follows from the Neumann series
$$(1+K \pm \lambda\iu)^{-1}-(1 \pm \lambda \iu)^{-1}= \sum_{n=1}^{\infty}(-1)^n((1 \pm \lambda \iu)^{-1}K)^n(1 \pm \lambda \iu)^{-1} \ .$$
\end{proof}

Recall that all operators we defined in this section act on $C(J,\Hi)$; thus they depend on $\ve \in J$. In the following lemma we specify $\ve$ and deal with operators acting on $\Hi$.

\begin{lem}
\label{smooth}
Let $\ve \in (0,\infty)$ and $(\alpha,\beta) \in \ins$.
\begin{enumerate}
\item
The operators $\Li_{\alpha,\beta,\ve}-1$ and $\delta_{\alpha,\ve}-D$ are $\ve$-spectrally concentrated. 
\item The operators $|\tau\Li_{\alpha,\beta,\ve}|-1,~|\tau \Li_{\alpha,\beta,\ve}|^{-1}-1,~\U_{\alpha,\beta,\ve}-1,~\U_{\alpha,\beta,\ve}^{-1}-1$ are $\ve$-spectrally concentrated.
\item The operators $A_{\alpha,\beta,\ve}:=\tilde \D_{\alpha,\beta,\ve}-\D$ and $\tilde \tau_{\alpha,\beta,\ve}-\tau$ are $\ve$-spectrally concentrated.
\end{enumerate}
\end{lem}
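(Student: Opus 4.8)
The plan is to deduce all three statements from part (1) of Lemma~\ref{lemYeps} together with the auxiliary lemma just proved (that $f(1+K)-f(1)$ is $\ve$-spectrally concentrated when $K$ is symmetric and $\ve$-spectrally concentrated), applying each to the concrete building blocks of $\tilde\D_{\alpha,\beta,\ve}$. For (1), I would first observe that $\T_{v,\ve}(p)=\phi_\ve(\D_M)T_v(p)\phi_\ve(\D_N)$ is $\ve$-spectrally concentrated by construction, hence so are $\T_{v,\ve}^{\aj}\T_{v,\ve}$, $\T_{v,\ve}^{\aj}$, and the off-diagonal products $\Y\T_{v,\ve}^{\aj}$, $\T_{v,\ve}\Y$ appearing in $\Li^{e/o}_{\alpha,\beta,\ve}$; since $\Li_{\alpha,\beta,\ve}-1$ is assembled entirely from such terms (with bounded coefficients $\beta,\alpha\gamma$), it is $\ve$-spectrally concentrated. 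The same reasoning applied to the off-diagonal entry $\pm\iu^{\ldots}\T_{v,\ve}^{\aj}\gamma$ of $\delta_{\alpha,\ve}$ shows $\delta_{\alpha,\ve}-D$ is $\ve$-spectrally concentrated. Here one uses that the class of $\ve$-spectrally concentrated operators is closed under sums, products, adjoints, and multiplication by arbitrary bounded operators that commute with the relevant $\D_M$ or $\D_N$ — in particular by $\gamma$, which commutes with $\D$.

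For (2), I would write $\Li_{\alpha,\beta,\ve}=\tau^{-1}(\tau\Li_{\alpha,\beta,\ve})$ and note $|\tau\Li_{\alpha,\beta,\ve}|=(\Li_{\alpha,\beta,\ve}^*\Li_{\alpha,\beta,\ve})^{1/2}$, so that $\Li_{\alpha,\beta,\ve}^*\Li_{\alpha,\beta,\ve}-1$ is $\ve$-spectrally concentrated by (1) (it equals $(\Li_{\alpha,\beta,\ve}-1)^*(\Li_{\alpha,\beta,\ve}-1) + (\Li_{\alpha,\beta,\ve}-1)^* + (\Li_{\alpha,\beta,\ve}-1)$). Writing $K:=\Li_{\alpha,\beta,\ve}^*\Li_{\alpha,\beta,\ve}-1$, which is symmetric and $\ve$-spectrally concentrated, the preceding lemma applied to $f(x)=\sqrt{1+x}$, $f(x)=(1+x)^{-1/2}$, $f(x)=(1+x)^{1/4}$, $f(x)=(1+x)^{-1/4}$ (all continuous near $x=0$ and, since $K$ is bounded with $1+K\ge c>0$, one may cut them off to lie in $C(\bbbr)$) immediately yields that $|\tau\Li_{\alpha,\beta,\ve}|-1$, $|\tau\Li_{\alpha,\beta,\ve}|^{-1}-1$, $\U_{\alpha,\beta,\ve}-1=|\tau\Li_{\alpha,\beta,\ve}|^{1/4}-1$ — wait, $\U_{\alpha,\beta,\ve}=|\tau\Li_{\alpha,\beta,\ve}|^{1/2}$, so $f(x)=(1+x)^{1/4}$ — and $\U_{\alpha,\beta,\ve}^{-1}-1$ are all $\ve$-spectrally concentrated. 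The point is that invertibility and positivity of $1+K$ let us replace the relevant functions by elements of $C(\bbbr)$ without changing the operators in question.

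For (3), I would combine the above: $\tilde\tau_{\alpha,\beta,\ve}-\tau=\U_{\alpha,\beta,\ve}\Si_{\alpha,\beta,\ve}\U_{\alpha,\beta,\ve}^{-1}-\tau$. Using $\Si_{\alpha,\beta,\ve}=\tau\Li_{\alpha,\beta,\ve}|\tau\Li_{\alpha,\beta,\ve}|^{-1}$ and the identities $\tilde\tau_{\alpha,\beta,\ve}=\U_{\alpha,\beta,\ve}\tau\Li_{\alpha,\beta,\ve}|\tau\Li_{\alpha,\beta,\ve}|^{-1}\U_{\alpha,\beta,\ve}^{-1}$, one expands the difference with $\tau$ into a finite sum of terms each of which contains at least one factor from the list $\{\Li_{\alpha,\beta,\ve}-1,\ |\tau\Li_{\alpha,\beta,\ve}|^{-1}-1,\ \U_{\alpha,\beta,\ve}-1,\ \U_{\alpha,\beta,\ve}^{-1}-1\}$ (using that $\tau$ commutes with $\D$, hence is harmless), so $\tilde\tau_{\alpha,\beta,\ve}-\tau$ is $\ve$-spectrally concentrated. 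Finally, from the formulas $\tilde\D_{\alpha,\beta,\ve}=\tilde\delta_{\alpha,\beta,\ve}-\tilde\tau_{\alpha,\beta,\ve}\tilde\delta_{\alpha,\beta,\ve}\tilde\tau_{\alpha,\beta,\ve}$ (even case) resp. $(-\iu)(\tilde\tau_{\alpha,\beta,\ve}\tilde\delta_{\alpha,\beta,\ve}+\tilde\delta_{\alpha,\beta,\ve}\tilde\tau_{\alpha,\beta,\ve})$ (odd case), with $\tilde\delta_{\alpha,\beta,\ve}=\U_{\alpha,\beta,\ve}\delta_{\alpha,\ve}\U_{\alpha,\beta,\ve}^{-1}$, one writes $A_{\alpha,\beta,\ve}=\tilde\D_{\alpha,\beta,\ve}-\D$ as a polynomial expression in the operators already shown to differ from $1$, $\tau$, or $D$ by an $\ve$-spectrally concentrated operator (recall $\D=D-\tau D\tau$ resp. $(-\iu)(\tau D+D\tau)$); expanding and collecting shows every term involves such a difference, so $A_{\alpha,\beta,\ve}$ is $\ve$-spectrally concentrated.

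The main obstacle is not any single step but the bookkeeping in (3): one must verify that in the expansion of $\tilde\D_{\alpha,\beta,\ve}-\D$ no term survives without a ``small'' (spectrally concentrated) factor, and that the cut-off procedure turning $(1+x)^{\pm1/2}$, $(1+x)^{\pm1/4}$ into $C(\bbbr)$-functions is legitimate uniformly in $(\alpha,\beta)\in\ins$ — but the latter is clear since $\Li_{\alpha,\beta,\ve}^*\Li_{\alpha,\beta,\ve}$ is uniformly bounded and bounded below on the compact set $\ins$ for fixed $\ve$. Since $\ve\in(0,\infty)$ is fixed here, all operators live on $\Hi$ rather than $C(J,\Hi)$, which avoids the extra subtlety of a varying $\ve$.
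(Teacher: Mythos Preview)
Your overall strategy matches the paper's, and your treatment of (2) and (3) is correct. There is, however, a genuine gap in your argument for (1).

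You assert that $\Y\T_{v,\ve}^{\aj}$ and $\T_{v,\ve}\Y$ are $\ve$-spectrally concentrated ``hence'' from $\T_{v,\ve}$ being so, and you justify this by closure under ``multiplication by arbitrary bounded operators that commute with the relevant $\D_M$ or $\D_N$''. But $\Y$ does \emph{not} commute with $\D_N$ (nor with $\D_N^2$), so this closure property does not apply. Left (or right) multiplication of an $\ve$-spectrally concentrated operator by an arbitrary bounded operator preserves the condition only on one side; on the other side one needs the multiplier to commute with $\D^2$. As written, your argument does not exclude that $\Y$ maps the range of $\psi(\D_N)$ into the range of $\phi_\ve(\D_N)$.

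The missing input is part (4) of Lemma~\ref{lemYeps} (not part (1), which is about $\T d_N = d_M\T$ and is irrelevant here): the decomposition $\Y_\ve = \Y_\ve' + \Y_\ve''$ with $\Y_\ve'$ $\ve$-spectrally concentrated and $\Y_\ve''$ commuting with $\D_N^2$. Since $\phi_\ve$ is even, $\Y_\ve''$ commutes with $\phi_\ve(\D_N)$, and then, for instance,
\[
\T_{v,\ve}\Y_\ve = \phi_\ve(\D_M)T_v(p)\phi_\ve(\D_N)\Y_\ve' + \phi_\ve(\D_M)T_v(p)\Y_\ve''\phi_\ve(\D_N)
\]
is visibly $\ve$-spectrally concentrated on both sides. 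The term $\Y_\ve\T_{v,\ve}^{\aj}$ is handled symmetrically, using that $\tau$ commutes with $\D^2$. This is exactly what the paper invokes.

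A smaller point: $\gamma$ anticommutes with $\D$ in even dimensions and commutes in odd dimensions; what you actually need (and what holds in both cases) is that $\gamma$ commutes with $\D^2$, hence with even functions of $\D$.
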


We will also write $A(f)_{\alpha,\beta,\ve}$ for $A_{\alpha,\beta,\ve}$ if we want to stress the dependence on $f$.

\begin{proof}
Assertion (1) follows from the fact that $\T_{\ve},~\T_{\ve}^{\aj}$ are $\ve$-spectrally concentrated and that $\Y_{\ve}$ is the sum of an $\ve$-spectrally concentrated operator and an operator commuting with $\D^2$.

Assertion (1) implies that $\tau\Li_{\alpha,\beta,\ve}-\tau$ is $\ve$-spectrally concentrated, and hence
$$(\tau\Li_{\alpha,\beta,\ve})^2-1=(\tau\Li_{\alpha,\beta,\ve}-\tau)^2+\tau(\Li_{\alpha,\beta,\ve}-1)\tau+(\Li_{\alpha,\beta,\ve}-1)$$ is $\ve$-spectrally concentrated as well. Now (2) follows from the previous lemma.

Using this one easily checks (3).
\end{proof}

\section{Noncommutative $\rho$-forms for homotopy equivalences}
\label{rhodef}

In this paper we only study $\rho$-forms for $\dim M =n$ odd. Probably our methods can be adapted to the even case. Alternatively, in the even case one may apply our results to $M \times S^1$ in order to get information on $M$.

Our definitions in this section are based on the framework exposed in \cite{wazyl}. 

We assume that $B$ is a point and that $f: M \to N$ is a homotopy equivalence.

Let $(\A_j,\iota_{j+1,j}:\A_{j+1} \to \A_j)_{j \in \bbbn_0}$ be a projective system of
involutive Banach algebras with unit satisfying the following conditions:
\begin{itemize} 
\item It holds that $\A_0=\A$.
\item The map $\iota_{j+1,j}:\A_{j+1} \to \A_j, j \in \bbbn_0$ is injective.
\item The induced map $\iota_j:\Ai:= \pl{i}\A_i \to \A_j, j \in \bbbn_0$ has dense
range.
\item The algebra $\A_j, j \in \bbbn$ is stable with respect to the holomorphic functional calculus in
$\A$.  
\item It holds that $\pi(\bbbc\Gamma) \subset \A_j, j \in \bbbn_0$.
\end{itemize}

For the last condition we consider $\A_j$ as a subalgebra of $\A$.

The projective limit $\Ai$ is an involutive  locally
$m$-convex Fr\'echet algebra with unit and $\pi(\bbbc\Gamma) \subset \Ai$.  

The universal algebra of differential forms is the $\bbbz$-graded space $\Oi\Ai= \prod_{k=0}^{\infty} \Ok \Ai$ with $\Ok \Ai := \Ai (\ten \Ai/\bbbc)^k, ~k \in \bbbn_0$. The tensor products here are completed projective. The differential $\di$ of degree one is given by 
$\di(a_0 \ten a_1 \dots \ten a_k)=1 \ten a_0 \ten a_1 \dots \ten a_k$ and linear extension. The product is determined by
$a_0 \ten a_1 \dots \ten a_k= a_0 (\di a_1) \dots (\di a_k)$, graded Leibniz rule and $\bbbc$-linearity. 

The involution on $\Ai$ extends to an involution on $\Oi\Ai$. Let $\ideal \subset \Oi\Ai$ be a closed homogeneous ideal, which is closed under the involution and under $\di$. We set $\Oi^{\ideal}\Ai:=\Oi\Ai/\ideal$. This is an involutive differential algebra. 

Most of the time we omit $\ideal$ from the notation for simplicity. 

We also recall that there is a Chern character $\ch: K_*(\A) \to H_*^{dR}(\Ai)$ with values in the de Rham homology associated to $\Oi^{\ideal}\Ai$.

We denote by $\Oi^{<e>}\Ai$ the differential algebra of forms localized at the identity. This is the closure of the subalgebra of $\Oi\Ai$ generated by the forms $\pi(g_0) \di \pi(g_1) \dots \di \pi(g_k)$ with $g_0, g_1, \dots ,g_k \in \Gamma$ and $\pi(g_0g_1 \dots g_k)=\pi(e)$.

For $j\in \bbbn \cup \infty$ we define $\F^j_N:=\tilde N \times_{\Gamma} \A_j$, $\F^j_M:=f^* \F^j_N$ and $\F^j:=\F_N^j \cup \F_M^j$.

Let $(U_k)_{k=1, \dots, m}$ be a finite open cover of $N$. Let $q:\tilde N \to N$ be the projection. We assume that for every $k$ there is a cross-section $\psi_k:U_k \to q^{-1}(U_k)$ and set $U_k'=\psi_k(U_k)$. Choose a smooth partition of unity $(\chi_k^2)_{k=1, \dots, m}$ subordinate to $(U_k)_{k=1, \dots, m}$. We write $g_{kl}: U_k \cap U_l \to \Gamma$ for the locally constant function whose image at a point $x$ is the deck transformation sending $U_l'\cap q^{-1}(x)$ to $U_k' \cap q^{-1}(x)$. Then $g_{kl}g_{lm}=g_{km}$ and $g_{kk}=1$ on the set where these expressions are defined. 

By $(P_N)_{kl}=\chi_k\chi_l g_{kl}$ a projection $P_N \in \C(N,M_m(\bbbc\Gamma))$ is defined. We set $P_M=P_N \circ f$.

The map $\F^j_N \to P_N(\A_j^m \times N)$ defined by $$\C(\tilde N,\A_j)^{\Gamma} \to P_N(\C(N,\A_j^m)),~s\mapsto (\chi_1 (s\circ \psi_1), \chi_2 (s\circ \psi_2), \dots, \chi_m (s\circ \psi_m))$$ is an isomorphism of $\A_j$-vector bundles. By pulling back the connection $P_N\di P_N$ we get a connection on $\F^j_N$ in direction of $\A_j$  (see \cite[\S 1.2]{wazyl} for this notion). Analogously $P_M\di P_M$ defines a connection on $\F^j_M$ in direction of $\A_j$.

Since $f$ is a homotopy equivalence, the operator $\D+A(f)_{\alpha,1,\ve}$ from the previous section is invertible.

Using the noncommutative connections, one gets an $\eta$-form $\eta(\D,A_{\alpha,1,\ve})$ for $(\alpha,1) \in \ins$ with $\alpha \neq 0$ and $\ve \in (0,\infty)$, see \S 5 and the remarks after Def. 9.5 in \cite{wazyl}. It is well-defined as an element of $\Oi\Ai/\ov{[\Oi\Ai,\Oi\Ai]_s + \di\Oi\Ai}$. 
(Here $[\cdot, \cdot]_s$ denotes the graded commutator.)

We set
$$\eta(f):=\frac 12\bigl(\eta(\D, A(f)_{\alpha,1,\ve})+\eta(\D,A(f)_{-\alpha,1,\ve})\bigr) \ .$$

The definition of $\eta(f)$ involves many choices: The definition of operator $A(f)_{\alpha,1,\ve}$ depends on the choice of the submersion $p$, the form $v$, the operator $\Y$, the function $\phi_{\ve}$ and the numbers $\alpha,\ve$. Our considerations so far imply that any two of these choices can be joined by a path of possible choices leading to a path $(A(f)_t)_{t \in [0,1]}$ of selfadjoint operators with smooth integral kernel such that $D_t:=\D + A(f)_t$ is invertible for all $t \in [0,1]$. It follows from the results of the first two sections that the family $(A(f)_t)_{t \in [0,1]}$ defines a bounded operator on $C([0,1],\Hi)$. 

We recall some facts on the noncommutative spectral flow proven in \cite{wancsf} (for the concept see also \cite{lpdir}). Let $B$ be a compact space and $(D_b)_{b\in B}$ a family of regular selfadjoint operators on $\Hi$ with domain $\Hi^1$ (here $\Hi^1$, $\Hi$ are as before) and such that $D_b:\Hi^1 \to \Hi$ is bounded and depends in a strongly continuous way on $b$. We assume that the family $(D_b)_{b\in B}$ defines a regular selfadjoint operator on the Hilbert $C(B)$-module $C(B,\Hi)$ with domain $C(B,\Hi^1)$. Thus $(D_b+\iu)_{b \in B}$ is invertible as a bounded operator from $C(B,\Hi^1)$ to $C(B,\Hi)$. Since multiplication by $\iu$ is compact as an operator between these spaces, it follows that the family $(D_b)_{b \in B}$ defines a Fredholm operator between them. 

In particular, if $B=[b_0,b_1]$ is an interval and if $D_b$ is invertible for $b=b_0,b_1$, then the spectral flow of $(D_b)_{b \in B}$ is well defined. If $(D_b)_{b\in B}$ is invertible as a regular operator on $C(B,\Hi)$, then the spectral flow vanishes. Futhermore if $D_b:\Hi^1 \to \Hi$ depends continuously on $b$ (in the operator norm topology), then $\spfl((D_b)_{b \in B})=\ind(\ra_b+D_b)$, see the remarks before \cite[Lemma 2.5]{aw}. Here $\ra_b+D_b$ is considered as an operator on $C_0(\bbbr,\Hi)$; we set $D_b:=D_{b_0}$ for $b<b_0$ and $D_b:=D_{b_1}$ for $b>b_1$.

Thus, for $D_t:=\D + A(f)_t$ as above, $\spfl((D_t)_{t\in [0,1]})=0$. Hence also $\ind(\ra_t - D_t)=0$.  The Atiyah--Patodi--Singer index theorem for Dirac operators over $C^*$-algebras (Theorem 9.4 in \cite{wazyl}) applied to $\ra_t - D_t$ yields
\begin{align*} 
0=\ch(\ind(\ra_t - D_t))
&=(2\pi \iu)^{-\frac{n+1}{2}}\int_{[0,1]\times (N\cup M)}L([0,1]\times (N\cup M))\ch(\F) \\
& \qquad +\eta(\D,A(f)_0)-\eta(\D,A(f)_1)\\
&= \eta(\D,A(f)_0)-\eta(\D,A(f)_1) \ .
\end{align*}
Here $L(~\cdot~)$ denotes the Hirzebruch class. The last equality follows from the fact that $\int_{[0,1]}L([0,1]\times (N\cup M))\ch(\F)$ vanishes.

Thus $\eta(f)$ does not depend on the choices. 

Note that $\eta(f)$ depends on the metrics on $M$, $N$. It also depends on the choice of the projection $P_N$. Only the degree zero part is independent of this latter choice. 

If we want to emphasize the dependence on the metric we write $\eta(f,g_M,g_N)$. If $M=N$ as Riemannian manifolds, then we may also write $\eta(f,g_M)$. 

Two different metrics $g_0$, $g_1$ on $N \cup M$ can be joined by a path of metrics $g_t$ giving rise to a path of invertible operators $\D_{g_t}+A(f)_t$. (Here the path $A(f)_t$ is of course different from the above one.) Here $\D_{g_t}$ is the signature operator with respect $g_t$. Then $\ind(\ra_t-\D_{g_t}-A(f)_t)$ vanishes in $K_0(\A)$. The Atiyah--Patodi--Singer index  theorem for Dirac operators over $C^*$-algebras applied to the operator $\ra_t- \D_{g_t}-A(f)_t$ implies that 
$$\eta(f,g_0|_M,g_0|_N)-\eta(f,g_1|_M,g_1|_N)$$
$$\in \Oi^{<e>}\Ai/\left(\ov{[\Oi\Ai,\Oi\Ai]_s + \di\Oi\Ai} \cap \Oi^{<e>}\Ai\right) \ ,$$ since the local contribution is an element in that space.

\begin{ddd}
We define the $\rho$-form 
$$\rho(f)=[\eta(f)] \in \Oi\Ai/\ov{[\Oi\Ai,\Oi\Ai]_s + \di\Oi\Ai+\Oi^{<e>}\Ai} \ .$$ 
\end{ddd}

By the previous arguments the $\rho$-form is independent of the metrics on $M$ and $N$. By the arguments in the proof of \cite[Lemma 12.1]{wazyl} it is also independent of the choice of the projection $P_N$.

\section{Relation to well-known invariants}

In the following we relate the $\eta$- and $\rho$-forms $\eta(f)$ and $\rho(f)$ from the previous section to well-known $\rho$-invariants.

\subsection{The higher case}
\label{high}

A noncommutative $\eta$-form $\eta(M,g_M)$ for the signature operator on a single manifold $M$ can be defined using the symmetric spectral sections introduced by Leichtnam--Piazza, see \cite{lpsign,lpdir}. (A different but equivalent regularization was given in \cite{llp}.) Symmetric spectral sections do not always exist. The corresponding analytic $\rho$-forms appear in \cite{lpsign} for groups of polynomial growth and in \cite{wazyl} in general. A topological analogue of these analytic higher $\rho$-form has been defined in \cite{we}.

Assume that the range of the closure of $d_M:\Omega^{m-1}(M,\F_M)\to \Omega_{(2)}^m(M, \F_M)$ with $m=(\dim M +1)/2$ is closed.

This assumption is rather strong; see \cite[\S 3]{llk} for equivalent conditions and \cite[Lemma 2.2]{llp} for examples in the case $\A=C^*_r\Gamma$.

\begin{lem}
The range of the closure of $d_N:\Omega^{m-1}(N,\F_N)\to \Omega_{(2)}^m(N, \F_N)$ is closed as well.
\end{lem}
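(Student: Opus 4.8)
The plan is to transfer the closed-range property from $M$ to $N$ by exploiting that $T_v(p)$ is one half of a \emph{bounded} chain homotopy equivalence between the $L^2$ de Rham complexes of $N$ and $M$. The key input is the identity $1+T_v(p)^{\aj}T_v(p)=d_NY+Yd_N$ of Lemma~\ref{lemY}(1), which I would rewrite as
$$\bigl(-T_v(p)^{\aj}\bigr)\,T_v(p)=\id+d_N(-Y)+(-Y)\,d_N\ ,$$
so that, setting $G:=-T_v(p)^{\aj}:\Omega^*_{(2)}(M,\F_M)\to\Omega^*_{(2)}(N,\F_N)$ and $K:=-Y$, one has $GT_v(p)=\id+d_NK+Kd_N$. (Alternatively $G$ may be taken to be $T_w(q)$ for a submersion $q$ associated to a smooth homotopy inverse of $f$, using the ``modulo boundaries'' relation from the proof of Lemma~\ref{iso}.) The operators $T_v(p)$, $G$, $K$ are bounded on the $L^2$- and on the $H^1$-spaces and preserve the domains of $d$; moreover $G$ intertwines the de Rham differentials (up to a sign, which is immaterial since the ranges occurring below are linear subspaces), as one checks from $T_v(p)^{\aj}=-\tau_NT_v(p)^*\tau_M$, $T_v(p)d_N=d_MT_v(p)$ and the relation $D^{\aj}=-D$. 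Denote by $\ov{d_N}$, $\ov{d_M}$ the closures of the de Rham differentials $d_N$ on $\Omega^*_{(2)}(N,\F_N)$ and $d_M$ on $\Omega^*_{(2)}(M,\F_M)$. From the $L^2$-boundedness of $T_v(p)$ and $G$ together with the corresponding statements for $d$ on smooth forms it follows that $T_v(p)$ and $G$ map $\dom\ov{d_N}$, resp.\ $\dom\ov{d_M}$, into one another and commute (up to sign) with the \emph{closed} differentials, while $Y$ maps $\dom\ov{d_N}$ into itself by Lemma~\ref{lemY}(1). The argument works in any degree, so it suffices to treat $m-1\to m$.

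First I would take $\eta$ in the $L^2$-closure of the range of $\ov{d_N}$ on $(m-1)$-forms and write, by definition of the closure, $\eta=\lim_j d_N\omega_j$ with $\omega_j\in\Omega^{m-1}(N,\F_N)$ smooth. Because $d_N(d_N\omega_j)=0$, the forms $d_N\omega_j$, and hence their limit $\eta$, lie in $\Ker\ov{d_N}$; thus $\ov{d_N}\eta=0$. Applying $T_v(p)$, the forms $T_v(p)(d_N\omega_j)=\ov{d_M}\bigl(T_v(p)\omega_j\bigr)$ lie in the range of $\ov{d_M}$ on $(m-1)$-forms and converge in $L^2$ to $T_v(p)\eta$; since that range is closed by hypothesis, there is $\mu\in\dom\ov{d_M}$ with $\ov{d_M}\mu=T_v(p)\eta$. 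Consequently $GT_v(p)\eta=G\,\ov{d_M}\mu=\pm\,\ov{d_N}(G\mu)$ lies in the range of $\ov{d_N}$ on $(m-1)$-forms. On the other hand, applying the identity $GT_v(p)=\id+d_NK+Kd_N$ to $\eta$ and using $\ov{d_N}\eta=0$ gives
$$GT_v(p)\eta=\eta+\ov{d_N}(K\eta)\ ,\qquad K\eta\in\dom\ov{d_N}\ .$$
Subtracting, $\eta=GT_v(p)\eta-\ov{d_N}(K\eta)$ is a difference of two elements of the range of $\ov{d_N}$ on $(m-1)$-forms; hence $\eta$ lies in that range, which is therefore closed.

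The only genuine idea here is the elementary observation that an $L^2$-limit of exact forms is closed: this is what lets the bounded chain homotopy act on $\eta$. The remaining work is routine bookkeeping --- that $T_v(p)$, $G$, $K$ genuinely carry $\dom\ov d$ into $\dom\ov d$, that $T_v(p)$ and $G$ intertwine the \emph{closed} differentials and not just $d$ on smooth forms, and that the algebraic identity of Lemma~\ref{lemY}(1) remains valid on $\dom\ov{d_N}$ after passing to closures (for which, if one prefers not to invoke the stated domain property of $Y$ directly, one approximates $\eta$ by smooth forms $\eta_l$ with $d_N\eta_l\to0$ in $L^2$ and takes the $L^2$-limit in $GT_v(p)\eta_l=\eta_l+d_N(K\eta_l)+K(d_N\eta_l)$, using the $L^2$-boundedness of $K$, $T_v(p)$, $G$). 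All of this is supplied by the boundedness assertions collected in Section~\ref{setting}, via the fact that every element of $\dom\ov d$ is, by definition, an $L^2$-limit of smooth forms whose differentials converge in $L^2$. I expect keeping this distinction between $d$ and its closure straight throughout to be the only point demanding any care.
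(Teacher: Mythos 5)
Your proposal is correct and is essentially the paper's own argument: you use the chain homotopy identity $1+T_v(p)^{\aj}T_v(p)=d_NY+Yd_N$ from Lemma~\ref{lemY}(1), push the limit of exact forms to $M$ via $T_v(p)$ (intertwining $d_N$ with $d_M$), invoke closedness of the range of $d_M$ to get $T_v(p)\eta=\ov{d_M}\mu$, and then read off $\eta$ as an element of the range of $\ov{d_N}$. The paper phrases the same computation directly on the sequence $d_Nx_n$ and passes to the limit only at the end, but the algebra and the inputs are identical.
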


\begin{proof}
Let $(x_n)_{n \in\bbbn} \subset \Omega^{m-1}(N,\F_N)$ be a sequence such that $d_Nx_n$ converges to $x\in \Omega_{(2)}^m(N, \F_N)$ for $n \to \infty$. Then $d_MT_v(p)x_n$ converges to $T_v(p)x$. The assumption implies that $T_v(p)x=d_My$ for some $y \in \dom d_M$. Now $$d_Nx_n=(1+ T_v(p)^{\aj}T_v(p))d_Nx_n-T_v(p)^{\aj}T_v(p)d_Nx_n=d_N Yd_Nx_n-T_v(p)^{\aj}d_MT_v(p)x_n \ .$$ In the limit one gets $x=d_N(Yx- T_v(p)^{\aj}y)$.
\end{proof}

Let $V$ be the closure of $d\Omega^{m-1}(N \cup M,\F)\oplus d^*\Omega^m(N \cup M,\F)$ in $\Omega^*_{(2)}(N \cup M,\F)$ and let $W$ be the orthogonal complement. The operators $\D$, $d$, $\tau$ map $V$, $W$ to themselves. Let $V_M$, $W_M$ be the intersection of $V$, $W$, respectively, with $\Omega_{(2)}^*(M,\F_M)$. We define an involution $\inv$ on $W$ by setting it $1$ on forms of degree smaller than $m$ and $-1$ on the orthogonal complement. We extend it by zero to a selfadjoint operator on $V \oplus W$. The operator $\D$ anticommutes with $\inv$. Let $\inv_M$ be the restriction of $\inv$ to $\Omega_{(2)}^*(M,\F_M)$. The operator $\D_M+ \iu\inv_M \tau_M$ is invertible and $\eta(M,g_M)$ is defined as the $\eta(\D_M,\iu\inv_M\tau_M)$. Analogously $\eta(N,g_N)$ is defined. 

\begin{prop} 
\label{prophigh}
Under the above assumption it holds that $$\eta(f,g_M,g_N)=\eta(N,g_N)-\eta(M,g_M) \ .$$
\end{prop}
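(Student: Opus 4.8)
The plan is to compare the two invertible perturbations of the signature operator on $N \cup M^{op}$ --- the one $\D + A(f)_{\alpha,1,\ve}$ coming from the Hilsum--Skandalis formalism, and the direct sum $\iu\inv_N\tau_N \oplus (-\iu\inv_M\tau_M)$ coming from the symmetric spectral sections --- by connecting them through a path of invertible perturbations and applying the Atiyah--Patodi--Singer index theorem for Dirac operators over $C^*$-algebras (Theorem 9.4 of \cite{wazyl}), exactly as in the proofs of well-definedness of $\eta(f)$ in Section \ref{rhodef}. Since $\eta$ is additive over disjoint unions, $\eta(\D,\iu\inv_N\tau_N\oplus(-\iu\inv_M\tau_M)) = \eta(\D_N,\iu\inv_N\tau_N) - \eta(\D_M,\iu\inv_M\tau_M) = \eta(N,g_N) - \eta(M,g_M)$; so it suffices to show $\eta(f,g_M,g_N)$ equals this latter $\eta$-form. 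As before, a path $(D_t)_{t\in[0,1]}$ of invertible selfadjoint perturbations with smooth integral kernel, joining $D_0 = \D + A(f)_{\alpha,1,\ve}$ to $D_1 = \iu\inv_N\tau_N\oplus(-\iu\inv_M\tau_M)$, has vanishing spectral flow once we know the endpoints are invertible and the family is invertible throughout (or, more weakly, once we compute $\ind(\ra_t - D_t)$); then the APS index theorem gives $\eta(\D,A(f)_0) - \eta(\D, (D_1 - \D)) = (2\pi\iu)^{-(n+1)/2}\int_{[0,1]\times(N\cup M)} L \cdot \ch(\F)$, and the local term vanishes since the $t$-direction contributes nothing to $L([0,1]\times(N\cup M))\ch(\F)$, yielding the claimed equality.

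The concrete path I would use runs through the operators of Section \ref{signop}: first note that on the subspace $V$ (the closure of $d\Omega^{m-1}\oplus d^*\Omega^m$) the perturbed operator $\hat\D_{\alpha,1}$ already behaves like a perturbation of $\D$ coupled via $\T$, while on $W$ the operator $\D$ vanishes and $\iu\inv\tau$ is the natural invertible model. The key observation, which is essentially \cite[Prop. 1.9]{hs} together with Lemma \ref{iso}, is that $T_v(p)$ (and hence $\T_{v,\ve}(p)$) induces an isomorphism $\Ker d_N/\im d_N \to \Ker d_M/\im d_M$ compatible with the Poincaré duality pairings, which is exactly what makes both $\D + A(f)_{\alpha,1,\ve}$ and $\iu\inv\tau$ invertible on $W$. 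I would build the homotopy in two stages: (i) deform $\alpha$ and the spectral-concentration parameter $\ve$ so that the Hilsum--Skandalis perturbation is supported, up to spectrally-concentrated error, on the finite-dimensional space $W$ of harmonic forms; (ii) on $W$, linearly interpolate between the resulting involution and $\iu\inv\tau$, checking invertibility at each step using that both involutions implement the same signature pairing on $\Ker d/\im d$ under $T_v(p)$. Throughout one must check the family defines a bounded operator on $C([0,1],\Hi)$ with the regularity needed to invoke the spectral-flow machinery, which is routine given the boundedness statements already established for $\T$, $\Y$, $\U_{\alpha,\beta}$.

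The main obstacle I expect is step (ii): matching up the two invertible involutions on the space of harmonic forms. The operator $\iu\inv\tau$ is defined using the degree filtration of $W$ (it is $+1$ below degree $m$ and $-1$ above), whereas the perturbation $\hat\D_{\alpha,1}$ restricted to harmonic forms is built from $\delta_\alpha - \Si_{\alpha,1}\delta_\alpha\Si_{\alpha,1}$, i.e. from the Hilsum--Skandalis involution $\Si_{\alpha,1}$ twisted by $\gamma$, and it is not immediately a function of the degree filtration. What makes the interpolation go through is that on $\Ker d_N/\im d_N \oplus \Ker d_M/\im d_M$ the nondegenerate form $Q$ restricted to the mapping cone of $T_v(p)$ has a definite signature determined purely by $f$ having degree one (it is the ``algebraic'' version of the computation $E_v(\ov p) = -E_v(\tilde\pi)$, $E_v(p_2) = 1$ carried out in the proof of Lemma \ref{lemY}), so any two involutions that polarize this form according to the same splitting into $\pm$ eigenspaces are homotopic through invertible involutions; I would verify that both our involution and $\iu\inv\tau$ polarize $Q$ compatibly with the Hodge decomposition, so that the straight-line homotopy between them stays invertible. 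Once this is in place the APS index theorem and the additivity of $\eta$ finish the proof, with $\eta(f,g_M,g_N) = \eta(N,g_N) - \eta(M,g_M)$ holding in $\Oi\Ai/\ov{[\Oi\Ai,\Oi\Ai]_s + \di\Oi\Ai}$.
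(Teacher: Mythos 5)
The paper's proof is structurally cleaner than yours and avoids exactly the step you identify as the ``main obstacle.'' Rather than building an explicit homotopy from $\D+A(f)_{\alpha,1,\ve}$ to $\D+\iu\inv\tau$ and then interpolating between two involutions on $W$, the paper first records the general fact (established via the cylinder calculation in \cite[Lemma 5.10]{waprod} and the APS index theorem over $C^*$-algebras) that \emph{any} selfadjoint bounded $A$ on $\Omega^*_{(2)}(N\cup M,\F)$ which vanishes on $V$, anticommutes with $\inv$, and makes $\D+A$ invertible --- a ``symmetric trivializing operator with respect to $\inv$'' --- already satisfies $\eta(\D,A)=\eta(N,g_N)-\eta(M,g_M)$. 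Then the whole content of the proof is the algebraic check that $A_{\alpha,1,\ve}(f)$, for $\ve$ small enough, is itself such an operator. Your stage (ii) --- homotoping the Hilsum--Skandalis involution restricted to $W$ to $\iu\inv\tau$ and verifying invertibility of the interpolation --- is unnecessary once you realize this, and it is precisely the point you flag as unresolved; you do not need to land on the particular perturbation $\iu\inv\tau$, only to show membership in the same class.

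There is also a factual slip: you describe $W$ as ``the finite-dimensional space of harmonic forms.'' In the Hilbert $C^*$-module setting over $\A$, $W$ is the orthogonal complement of $V=\ov{d\Omega^{m-1}\oplus d^*\Omega^m}$ and is not finite-dimensional (it is a closed Hilbert $\A$-submodule), so arguments that rely on finite-dimensionality, such as connectivity of the set of involutions polarizing a definite form, would need to be replaced by Hilbert-module arguments and are not routine. The paper sidesteps this entirely.

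The verification the paper does carry out, and which you should supply in place of your stage (ii), is: (a) $\D|_V$ is invertible, so for $\ve$ small $\phi_{4\ve}(\D)$ vanishes on $V$ and hence $\Ran\phi_{4\ve}(\D)\subset W$; since $A_{\alpha,1,\ve}(f)=\phi_{4\ve}(\D)A_{\alpha,1,\ve}(f)\phi_{4\ve}(\D)$ it follows $A_{\alpha,1,\ve}(f)=0$ on $V$; (b) the operators $\Li_{\alpha,1,\ve}$ and $\delta_{\alpha,\ve}$ decompose compatibly with $V\oplus W$ and commute with $\inv$, while $\tau$ (and hence $|\tau\Li_{\alpha,1,\ve}|$ commutes with $\inv$ and $\tau$) anticommutes with $\inv$, so $A_{\alpha,1,\ve}(f)=\tilde\D_{\alpha,1,\ve}-\D$ anticommutes with $\inv$. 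That is the entire argument; no homotopy between involutions on $W$ is required.
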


\begin{proof}
We will use the following fact: If $A$ is a selfadjoint bounded operator on $\Omega_{(2)}^*(N\cup M,\F)$ vanishing on $V$ and anticommuting with $\inv$ and if $\D + A$ is invertible, then $\eta(\D,A)=\eta(N,g_N)-\eta(M,g_M)$. This follows from the calculation on the cylinder in the proof of \cite[Lemma 5.10]{waprod} and the Atiyah--Patodi--Singer index theorem for Dirac operators over $C^*$-algebras. For symmetric spectral sections, which are slightly less general, the statement has been proven in \cite{lpsign}. 

An operator $A$ with these properties is called a symmetric trivializing operator with respect to $\inv$.

The operator $\D|_V$ is invertible. Hence we may choose $\ve>0$ small enough such that $\phi_{4\ve}(\D)|_V=\phi_{4\ve}(\D|_V)=0$. Then $\Ran \phi_{4\ve}(\D) \subset W$. Since $A_{\alpha,1,\ve}(f)=\phi_{4\ve}(\D)A_{\alpha,1,\ve}(f)\phi_{4\ve}(\D)$, the operator $A_{\alpha,1,\ve}(f)$ decomposes as $0|_V \oplus A_{\alpha,1,\ve}(f)|_W$. Similarly the operators $\Li_{\alpha,1,\ve}$ and $\delta_{\alpha,\ve}$ decompose. Using this one checks that both commute with $\inv$. Furthermore, $\tau$ anticommutes with $\inv$ and the operators $(\tau \Li_{\alpha,1,\ve})^2$ and hence also $|\tau \Li_{\alpha,1,\ve}|$ commute with $\inv$. It follows that $A_{\alpha,1,\ve}(f)$ anticommutes with $\inv$. Thus it is a symmetric trivializing operator.
\end{proof}

\subsection{The $L^2$-case}
\label{L2}

Assume that $\A$ is endowed with a positive normal trace $\nu$ and denote by $\A_{\nu}$ its Hilbert space completion with respect to the scalar product $\langle a,b \rangle_{\nu}=\nu(a^*b)$. We get a Hilbert space $H=\Omega_{(2)}^*(N \cup M,\F)\ten_{\A} \A_{\nu}$. We denote by $\N$ the von Neumann algebraic closure of the $C^*$-algebra $B(\Omega_{(2)}^*(N \cup M,\F))$ in $B(H)$. The algebra $\N$ is endowed with an induced semifinite trace $\Tr_{\nu}$. It restricts to a semifinite trace on the von Neumann subalgebras $\N_M$ and $\N_N$ generated by $B(\Omega_{(2)}^*(M,\F_M))$ and $B(\Omega_{(2)}^*(N,\F_N))$, respectively. The operators $\D_M$ and $\D_N$ are affiliated to $\N_M$ and $\N_N$ with resolvents in $K(\N_M)$ and $K(\N_N)$, respectively. 

We write $$\eta_{\nu}(\D_M)=\frac{1}{\sqrt{\pi}}\int_0^{\infty} t^{-1/2}\Tr_{\nu}(\D_M e^{-t\D_M^2})~dt \ .$$ 
Analogously $\eta_{\nu}(\D_N)$ is defined.
For the well-definedness of this expression see for example \cite[\S 8]{ps}.

\begin{prop}
\label{L2prop}
It holds that 
$$\nu(\eta(f))=\eta_{\nu}(\D_N)-\eta_{\nu}(\D_M) \ .$$
\end{prop}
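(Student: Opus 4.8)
The plan is to reduce the statement to a trace computation in the von Neumann algebra $\N$, using the fact that applying the trace $\nu$ to a noncommutative $\eta$-form produces the ordinary $L^2$-$\eta$-invariant of the corresponding operator. More precisely, the key input is that $\nu$ induces a homomorphism from $\Oi\Ai/\ov{[\Oi\Ai,\Oi\Ai]_s + \di\Oi\Ai}$ to $\bbbc$ (sending the higher $\eta$-form to a scalar), and that under this map $\nu(\eta(\D,A)) = \eta_{\nu}(\tilde\D_{\alpha,1,\ve})$, i.e.\ the $L^2$-$\eta$-invariant of the perturbed signature operator $\tilde\D_{\alpha,1,\ve}$ acting on $H$ with respect to $\Tr_{\nu}$. (This is the standard compatibility between higher and $L^2$ $\eta$-invariants; I would cite \S 8 of \cite{ps} and the corresponding passage in \cite{wazyl}.) So the statement becomes
$$\eta_{\nu}(\tilde\D_{\alpha,1,\ve}) = \eta_{\nu}(\D_N) - \eta_{\nu}(\D_M) \ ,$$
where on the right the sign of the $M$-contribution reflects the reversed orientation of $M^{op}$ built into $\tau$ and $\D$.

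First I would observe that $\tilde\D_{\alpha,1,\ve}$ is conjugate by the bounded invertible operator $\U_{\alpha,1,\ve}$ to $\hat\D_{\alpha,1,\ve}$, and then relate $\hat\D_{\alpha,1,\ve}$ to the operator of \cite[Def.\ 9.13]{ps}; the excerpt flags that our odd-dimensional operator differs from the one in \cite{ps}, so this comparison is where the explicit $\iota$'s and $\tau$'s must be tracked. Since $\Tr_{\nu}$ is invariant under conjugation by invertibles (in the sense relevant to $\eta$-invariants — the heat-kernel expressions are insensitive to such bounded perturbations by the argument in \cite[\S 8]{ps}), $\eta_{\nu}(\tilde\D_{\alpha,1,\ve}) = \eta_{\nu}(\hat\D_{\alpha,1,\ve})$. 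Next I would use that $A_{\alpha,1,\ve}$ is $\ve$-spectrally concentrated (Lemma \ref{smooth}(3)), hence $\tilde\D_{\alpha,1,\ve} - \D$ is $\ve$-spectrally concentrated and in particular a $\Tr_{\nu}$-trace-class perturbation supported near the kernel of $\D$; this is what makes the $\eta$-invariant well-defined and unchanged under the homotopy $\D \rightsquigarrow \tilde\D_{\alpha,1,\ve}$ except for the contribution of the finite-dimensional (in the von Neumann sense) spectral subspace where the perturbation acts. The upshot is that $\eta_{\nu}(\tilde\D_{\alpha,1,\ve})$ equals $\eta_{\nu}(\D)$ plus the $\eta$-invariant of the finite-rank (in $\N$) operator $\tilde\D_{\alpha,1,\ve}$ restricted to $\Ran\phi_{\ve}(\D)$; but invertibility of $\tilde\D_{\alpha,1,\ve}$ and the fact that it is a genuine self-adjoint operator mean this extra term vanishes just as in the closed-manifold computation of \cite{ps}. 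Finally $\eta_{\nu}(\D) = \eta_{\nu}(\D_N \oplus (-\D_M)) = \eta_{\nu}(\D_N) - \eta_{\nu}(\D_M)$ by additivity of the trace and oddness of $\eta$ under $\D_M \mapsto -\D_M$.

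Alternatively — and this is probably cleaner — I would argue directly from the index-theoretic side rather than from heat kernels: apply $\nu$ to the Atiyah–Patodi–Singer index theorem over $C^*$-algebras that was already used in \S\ref{rhodef} to define $\eta(f)$, comparing the path from $\D$ to $\D + A(f)_{\alpha,1,\ve}$ with the analogous $L^2$ statement for $\D_N$ and $\D_M$ separately, and noting that all the local Hirzebruch-class contributions cancel (they are metric-independent and cancel between $N$ and $M^{op}$ since $f$ has degree one, exactly as in the vanishing of $\int_{[0,1]}L\,\ch(\F)$ invoked earlier). This packages the whole argument into the single identity $\nu\circ\ch = \Tr_{\nu}$-pairing and reduces everything to additivity of $\Tr_{\nu}$ over the decomposition $H = H_N \oplus H_M$.

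\textbf{Main obstacle.} The genuinely delicate point is the comparison in the odd-dimensional case between our operator $\tilde\D_{\alpha,1,\ve}$ (built to ``look like a signature operator'', with the factor $(-\iu)(\Si\delta + \delta\Si)$) and the operator of \cite[Def.\ 9.13]{ps} whose $L^2$-$\eta$-invariant is known to compute $\eta_\nu(\D_N) - \eta_\nu(\D_M)$: one must check that the two differ by a conjugation and/or a perturbation that does not change the $L^2$-$\eta$-invariant, keeping careful track of the $\iota$-factors, the chirality operators $\tau$, $\gamma$, and the sign conventions which, as the paper repeatedly warns, differ from those of \cite{hs,ps}. Everything else is bookkeeping with $\ve$-spectral concentration and additivity of $\Tr_{\nu}$.
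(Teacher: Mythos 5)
Your broad strategy — conjugate $\tilde\D_{\alpha,1,\ve}$ by $\U_{\alpha,1,\ve}$ to the selfadjoint operator $|\tau\Li_{\alpha,1,\ve}|\hat\D_{\alpha,1,\ve}$, recognize this as the corrected operator of \cite[Def.~9.13]{ps}, and then compare with the $L^2$-$\eta$-invariants of $\D_N$ and $\D_M$ — is exactly the route the paper takes, and you correctly flag the comparison with \cite[Def.~9.13]{ps} as the delicate point. But there is a genuine gap in the middle of your argument.

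The gap is the claim that the contribution of the $\ve$-spectrally concentrated perturbation simply vanishes: ``the $\eta$-invariant of the finite-rank (in $\N$) operator $\tilde\D_{\alpha,1,\ve}$ restricted to $\Ran\phi_{\ve}(\D)$ \dots vanishes just as in the closed-manifold computation.'' It does not vanish for a fixed sign of $\alpha$. The operator $\D$ is not invertible and the whole purpose of $A_{\alpha,1,\ve}$ is to push the low spectrum off zero; the $L^2$-$\eta$-invariant is genuinely sensitive to which way the spectrum is pushed, and only the average over $\pm\alpha$ kills the sign-dependent term. Indeed $\eta(f)$ is by definition the symmetrization $\frac12\bigl(\eta(\D,A(f)_{\alpha,1,\ve})+\eta(\D,A(f)_{-\alpha,1,\ve})\bigr)$, and the paper's proof hinges on the identity $\nu\bigl(\eta(\D,A^{PS}_{\alpha,\ve})+\eta(\D,A^{PS}_{-\alpha,\ve})\bigr)=2\bigl(\eta_\nu(\D_N)-\eta_\nu(\D_M)\bigr)$, which is the approximation result \cite[Theorem~10.1]{ps}. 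That result is a nontrivial $\ve\to 0$ limit argument, not a direct consequence of spectral concentration or additivity of $\Tr_\nu$; it is the real analytic content here, and your proposal does not engage with it. (Your alternative ``index-theoretic'' route has the same issue: applying $\nu$ to the APS index theorem over $C^*$-algebras along the path $\D\rightsquigarrow\tilde\D_{\alpha,1,\ve}$ only reproduces the identity $\nu(\eta(\D,A))=\eta_\nu(\tilde\D_{\alpha,1,\ve})$ and does not by itself connect to $\eta_\nu(\D_N)-\eta_\nu(\D_M)$; also there is no local Hirzebruch term in the $\eta$-invariant formula to cancel.)

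What the paper does in place of your step: it shows via a spectral flow argument along the path $U_t\tilde\D_{\alpha,1,\ve}U_t$ (with $U_t$ invertible selfadjoint, $U_t-1$ spectrally concentrated, and the conjugate invertible throughout) that $\eta(\D,A^{PS}_{\alpha,\ve})=\eta(\D,A(f)_{\alpha,1,\ve})$ — note this is a two-sided conjugation $U_t(\cdot)U_t$, not the similarity $U_t(\cdot)U_t^{-1}$ your ``conjugation invariance'' suggests, which matters since $\U_{\alpha,1,\ve}\tilde\D_{\alpha,1,\ve}\U_{\alpha,1,\ve}$, not $\U\tilde\D\U^{-1}$, is the operator of \cite{ps} — and then hands the $\pm\alpha$-symmetrized statement to \cite[Theorem~10.1]{ps}. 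You would need to reproduce or at least invoke that theorem (suitably adapted to the present sign conventions, as the paper cautions) to close the argument.
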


\begin{proof}
Let $(\alpha,1)\in \ins$ with $\alpha \neq 0$ and let $\ve \in (0,\infty)$.
From 
\begin{align*}
\langle\omega_1,|\tau\Li_{\alpha,1,\ve}|\hat\D_{\alpha,1,\ve}\omega_2 \rangle&=\langle\omega_1,\hat\D_{\alpha,1,\ve}\omega_2 \rangle_{\alpha,1,\ve} =\langle\hat\D_{\alpha,1,\ve}\omega_1,\omega_2 \rangle_{\alpha,1,\ve} \\
&=\langle \hat\D_{\alpha,1,\ve} \omega_1, |\tau\Li_{\alpha,1,\ve}|\omega_2\rangle =\langle |\tau\Li_{\alpha,1,\ve}|\hat \D_{\alpha,1,\ve} \omega_1, \omega_2\rangle
\end{align*}
we conclude that the invertible operator $|\tau\Li_{\alpha,1,\ve}| \hat\D_{\alpha,1,\ve}$ is selfadjoint. Up to our different conventions and an inaccuracy in \cite[\S 9]{ps} (where it is erronously assumed that the operator $\hat\D_{\alpha,1,\ve}|\tau\Li_{\alpha,1,\ve}|$ is selfadjoint) this operator is the one defined in \cite[Def. 9.13]{ps}. It holds that $|\tau\Li_{\alpha,1,\ve}| \hat\D_{\alpha,1,\ve}=\U_{\alpha,1,\ve}\tilde \D_{\alpha,1,\ve}\U_{\alpha,1,\ve}$. Thus the operator $A_{\alpha,\ve}^{PS}:=|\tau\Li_{\alpha,1,\ve}| \hat\D_{\alpha,1,\ve}-\D$ is $\ve$-spectrally concentrated. 

We define a continuous path of selfadjoint operators $(U_t)_{t\in [0,1+\alpha]}$ by setting $U_t=\U_{0,t,\ve}$ for $t \in [0,1]$ and $U_t=\U_{t-1,1,\ve}$ for $t \in [1,1+\alpha]$.
Then $U_0=1$ and $U_{1+\alpha}=\U_{\alpha,1,\ve}$; furthermore, $U_t$ is invertible and $U_t-1$ is $\ve$-spectrally concentrated for all $t$. It holds that $U_0 \tilde \D_{\alpha,1,\ve} U_0=\tilde \D_{\alpha,1,\ve}$ and $U_{1+\alpha} \tilde \D_{\alpha,1,\ve} U_{1+\alpha}=|\tau\Li_{\alpha,1,\ve}| \hat\D_{\alpha,1,\ve}=\D + A^{PS}_{\alpha,\ve}$.
Since $U_t \tilde \D_{\alpha,1,\ve} U_t$ is invertible for all $t \in [0,1+\alpha]$, its spectral flow vanishes. 

It follows that $$\eta(\D,A^{PS}_{\alpha,\ve})=\eta(\D,A(f)_{\alpha,1,\ve}) \ .$$
In particular $\eta(\D,A^{PS}_{\alpha,\ve})$ is independent of the choice of $\ve$ and depends only on the sign of $\alpha$.

One checks that the proof of the approximation result in \cite[Theorem 10.1]{ps} works also for our conventions and with the above inaccuracy corrected, proving that $$\nu(\eta(\D,A^{PS}_{\alpha,\ve})+\eta(\D,A^{PS}_{-\alpha,\ve}))=2 (\eta_{\nu}(\D_N)-\eta_{\nu}(\D_M)) \ .$$
The assertion now follows.
\end{proof}

Important examples are the following:

Let $\A=C^*\Gamma$ and $\pi:\bbbc \Gamma \to C^*\Gamma$ the inclusion.
\begin{itemize}
\item
Let $\nu_e$ be the finite trace on $C^*\Gamma$ defined by $\nu_e(g)=\delta_{g,e}$ for $g\in \Gamma$. Furthermore, let $\nu_1$ be the trace induced by the trivial representation $C^*\Gamma \to \bbbc$. The $L^2$-$\rho$-invariant of Cheeger and Gromov is defined as 
$$\rho_{(2)}(M):=\eta_{\nu_e}(\D_M)-\eta_{\nu_1}(\D_M) \ .$$
\item
Let $h_1,h_2: \Gamma \to U(k)$ be group homomorphisms. They induce homomorphisms $C^*\Gamma\to M_k(\bbbc)$. By pulling back the standard trace on $M_k(\bbbc)$ one obtains traces $\nu_{h_i}$ on $C^*\Gamma$. The induced Atiyah--Patodi--Singer $\rho$-invariant is defined as 
$$\rho_{h_1,h_2}(M):=\eta_{\nu_{h_1}}(\D_M)-\eta_{\nu_{h_2}}(\D_M) \ .$$
\end{itemize}

\section{Product formula}
\label{prodform}

Now let $M$, $N$ be Cartesian products with one common factor, i.~e. $M=M_1 \times M_2$ and $N=N_1 \times M_2$ and let $f_1:M_1 \to N_1$ be a smooth homotopy equivalence. We will only consider the case where $M_1$, $N_1$ are odd-dimensional and $M_2$ is even-dimensional. Other cases may be treated via suspension. If not specified, the definitions are as before.

Let $\tilde N_1 \to N_1,~\tilde M_2 \to M_2$ be Galois coverings with deck transformation group $\Gamma_i,~i=1,2$, respectively. Let $\B, \Ca$ be unital $C^*$-algebras endowed with a unital involutive algebra homomorphism $\bbbc\Gamma_1 \to \B$ and $\bbbc \Gamma_2 \to \Ca$, respectively. We set $\A=\B \ten \Ca$. Here we use the minimal tensor product. We identify $\B$ and $\Ca$ with the subalgebras $\B\ten \bbbc$ and $\bbbc \ten\Ca$ in $\A$, respectively.

We define the bundles $\F_{N_1}=\tilde N_1 \times_{\Gamma_1} \B$ on $N_1$ and $\F_2=\tilde M_2 \times_{\Gamma_2} \Ca$ on $M_2$ and $\F_N=\F_{N_1} \boxtimes \F_2$ on $N$ and endow them with the canonical $C^*$-scalar products and hermitian connections. This induces an $\A$-valued scalar product and hermitian connection on $\F_N$.

Let $\A_i$ be as in \S \ref{rhodef} and endow $\B_i:=\A_i \cap \B$ and $\Ca_i:=\A_i \cap \Ca$ with the subspace topology of $\A_i$. Note that $\B_i$ and $\Ca_i$ are closed under holomorphic functional calculus in $\B$ and $\Ca$, respectively. We assume that the projective limits $\B_{\infty}$ and $\Ca_{\infty}$ are dense in $\B_i$ and $\Ca_i$ for all $i$, respectively.  Let $\ideal_i$ be the closed ideal in $\Oi\A_i$ generated by the supercommutators $[\alpha,\beta]_s$ with $\alpha \in  \Oi\B_i$ and $\beta \in \Oi\Ca_i$.  In the following we deal with $\Oi^{\ideal_i}\A_i$.

We write $\di_1$ for the differential on $\Oi\B_i$ and $\di_2$ for the differential on $\Oi\Ca_i$.

For the bundles $\F_{N_1}$ and $\F_2$ there are projections $P_{N_1} \in \C(N_1,M_p(\bbbc\Gamma_1))$ and $P_2 \in \C(M_2,M_q(\bbbc\Gamma_2))$ as in \S \ref{rhodef}. We set $P_N=P_{N_1}\ten P_2$ and obtain on $\F^i_N$ the connection $P_N \di P_N$ in direction of $\A_i$.

As in \S \ref{setting} let $p_1:I^k \times M_1 \to N_1$ be a submersion associated to the homotopy equivalence $f_1: M_1 \to N_1$. (In this section the space $B$ will be a point.)

We consider the homotopy equivalence $f:=f_1 \times \id$ and the associated submersion $p:=p_1 \times \id$. 

Let $\F_{M_1}:=f_1^*\F_{N_1}$. Then $\F_M=f^*\F_N=\F_{M_1} \boxtimes \F_2$. We write $\F_1=\F_{N_1} \cup \F_{M_1}$ and $\F=\F_N \cup \F_M=\F_1 \boxtimes \F_2$. We define the projections $P_1=P_{N_1} \cup (P_{N_1} \circ f)$ and $P=P_1 \ten P_2$. Thus we have on $\F^i$ the connection $P \di P$ in direction of $\A_i$.

In the following we consider the spaces $\Omega^*_{(2)}(N,\F_N)$, $\Omega^*_{(2)}(M,\F_M)$ and $\Omega^*_{(2)}(N \cup M,\F)$ ungraded. For an operator $A$ on $\Omega_{(2)}^*(M_2,\F_2)$ we also denote by $A$ the operator $1 \ten A$ on $\Omega_{(2)}^*(M \cup N,\F)$. (Clearly, the tensor product is ungraded.) With this convention, the chirality operator on $\Omega_{(2)}^*(M \cup N,\F)$ decomposes as $\tau=\tau_1 \tau_2$ where $\tau_1,\tau_2$ are the chirality operators on $\Omega_{(2)}^*(M_1 \cup N_1,\F_1)$ and $\Omega_{(2)}^*(M_2,\F_2)$, respectively. The involution on $\Omega^*_{(2)}(N \cup M,\F)$ which equals the identity on forms of even degree and minus the identity on forms of odd degree  decomposes as $\gamma=\gamma_1\gamma_2$, where $\gamma_1,\gamma_2$ are the corresponding involutions on $\Omega_{(2)}^*(M_1 \cup N_1,\F_1)$ and $\Omega_{(2)}^*(M_2,\F_2)$, respectively.    

Only for the connection on $\Omega^*(N\cup M,\F)$ our convention is slightly different in order to preserve the usual formulas: Here we write $d=d_1 + d_2$, where $d_1(\alpha \wedge \beta)=(d_1\alpha) \wedge \beta$ and $d_2(\alpha \wedge \beta)=(-1)^k\alpha \wedge d_2\beta$ for $\alpha \in \Omega^k(N_1 \cup M_1,\F_1)$, $\beta \in \Omega^*(M_2,\F_2)$. 

The signature operator on $\Omega^*_{(2)}(M_1,\F_{M_1})$ is denoted by $\D_{M_1}$, and the signature operator on $\Omega^*_{(2)}(M_2,\F_2)$ by $\D_2$. On $\Omega^*(N_1 \cup M_1,\F_1)$ we define the operator $\D_1:=\D_{N_1} \oplus (-\D_{M_1})$. Furthermore, we have the operator $\D=\D_N \oplus (-\D_M)$ on $\Omega^*(N\cup M,\F)$, see \S \ref{setting}.

The operators $\D_1$, $\D_2$ and $\D$ are related via the equation
\begin{align*}
\D&=\tau_2\D_1 + \tau \gamma_1 \D_2\\
&=\frac 12(1+\tau_2)\D_1 - \frac 12 (1-\tau_2)\gamma_1\tau_1\D_1\tau_1\gamma_1 + \tau\gamma_1 \D_2 \ .
\end{align*}

Note that, as expected, $\D^2=\D_1^2+\D_2^2$.

The expression in the last line of the equation allows to associate an invertible bounded perturbation of $\D$ to an invertible bounded perturbation of $\D_1$:

If $A$ is a bounded selfadjoint operator on $\Hi_1:=\Omega_{(2)}^*(N_1 \cup M_1,\F_1)$, we define on $\Hi=\Omega_{(2)}^*(N\cup M,\F)$ the operator
$$\ov A:=\frac 12(1+\tau_2) A-\frac 12(1-\tau_2)\gamma_1\tau_1  A \tau_1\gamma_1 \ .$$

Thus
\begin{align}
\label{boundprod}
\D+ \ov{A}&=\frac 12(1+\tau_2)(\D_1 + A) - \frac 12 (1-\tau_2)\gamma_1\tau_1(\D_1+A)\tau_1\gamma_1 + \tau\gamma_1 \D_2 \ .
\end{align}

Now assume that $\D_1 + A$ is invertible on $\Hi_1$. Then the operator $\tau_2 \D_1 + \ov A$ is invertible on $\Hi$. Furthermore, this operator anticommutes with $\tau \gamma_1 \D_2$. It follows that $(\D + \ov{A})^2=(\tau_2\D_1+ \ov{A})^2+\D_2^2$ is invertible. Thus $\D+ \ov{A}$ is invertible.

From \S \ref{setting} we get an invertible perturbation $\D_1 + A(f_1)_{\alpha,1,\ve}$. 

\begin{prop}
\label{prodeta}
In $\Oi^{\ideal}\Ai$ modulo the closure of $[\Oi^{\ideal}\Ai,\Oi^{\ideal}\Ai]_s + (\di \Oi\Bi) \Oi\Ca_{\infty}+\Oi\Bi \di \Oi\Ca_{\infty}$ it holds that
$$\eta(\D,\ov{A(f_1)}_{\alpha,1,\ve})=\eta(\D_1,A(f_1)_{\alpha,1,\ve}) \ch(\ind (\D_2)) \ .$$
\end{prop}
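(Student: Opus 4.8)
The plan is to prove the product formula by passing to the rescaled picture and recognizing the $\eta$-form of $\D + \ov{A}$ as that of a product family. First I would note that by Proposition~\ref{L2prop}-style reasoning (or rather directly, using $\U_{\alpha,1,\ve}$), it suffices to work with the selfadjoint operator $\tilde\D_{\alpha,1,\ve}$ built from $\D_1$, and to show that its $\eta$-form, after the product construction $\ov{(\cdot)}$, equals $\eta(\D_1,A(f_1)_{\alpha,1,\ve})\,\ch(\ind\D_2)$. The key structural input is equation~\eqref{boundprod} together with the observation that $\tau_2\D_1+\ov{A}$ anticommutes with $\tau\gamma_1\D_2$ and $(\D+\ov A)^2=(\tau_2\D_1+\ov A)^2+\D_2^2$; this says that, up to the harmless conjugation by $\U_{\alpha,1,\ve}$ (which is $\ve$-spectrally concentrated and invertible, hence does not change the $\eta$-form by a spectral-flow argument as in \S\ref{rhodef}), the perturbed operator on $N\cup M$ is, in the appropriate grading, a graded tensor product of the perturbed operator on $N_1\cup M_1$ with the signature operator $\D_2$ on the closed manifold $M_2$.

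Next I would invoke a product formula for $\eta$-forms of such tensor-product families. Concretely: if $D^{(1)}$ is an invertible perturbed Dirac-type operator over $\B$ with its noncommutative connection $P_1\di P_1$, and $D^{(2)}=\D_2$ is the (possibly non-invertible) signature operator over $\Ca$ with connection $P_2\di P_2$, then the $\eta$-form of the product $D^{(1)}\hat\ten 1 + \tau^{(1)}\hat\ten D^{(2)}$ (with the tensor-product connection coming from $P_N\di P_N = (P_{N_1}\ten P_2)\di(P_{N_1}\ten P_2)$) equals $\eta(D^{(1)})\,\ch(\ind D^{(2)})$ in $\Oi^{\ideal}\Ai$ modulo the stated subspace. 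This is exactly the kind of statement proven in the author's earlier work \cite{waprod}; the point is that $D^{(2)}$ enters the heat kernel $e^{-t(\D+\ov A)^2} = e^{-t(\tau_2\D_1+\ov A)^2}\ten e^{-t\D_2^2}$ as a tensor factor, the supertrace over the $\Ca$-direction of the $\D_2$-heat kernel produces (by the local index theorem / McKean--Singer applied fibrewise in the $\Ca$-direction) a closed form representing $\ch(\ind\D_2)$, while the $\B$-direction reassembles into $\eta(D^{(1)})$. One has to be careful that the ideal $\ideal_i$ is generated by supercommutators $[\alpha,\beta]_s$, $\alpha\in\Oi\B_i$, $\beta\in\Oi\Ca_i$, so that the product of forms from the two factors makes sense and the two Chern/eta pieces multiply; this is why the quotient in the statement is exactly by $[\Oi^{\ideal}\Ai,\Oi^{\ideal}\Ai]_s + (\di\Oi\Bi)\Oi\Ca_{\infty} + \Oi\Bi\di\Oi\Ca_{\infty}$.

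The concrete steps, in order, are: (i) reduce from $\eta(\D,\ov{A(f_1)}_{\alpha,1,\ve})$ to the $\eta$-form of a rescaled, genuinely selfadjoint operator, using invertibility of $\D_1+A(f_1)_{\alpha,1,\ve}$ (from \S\ref{setting}) and the remark after Proposition~\ref{L2prop} that conjugation by $\ve$-spectrally concentrated invertibles leaves $\eta$ unchanged; (ii) use~\eqref{boundprod} and the fact that $\tau$ cancels from $|\tau\Li_{\alpha,1,\ve}|$ (emphasized after the definition of $\Si_{\alpha,\beta}$, and precisely the reason $\gamma$ was kept out of $\Li^{e/o}$) to write the rescaled operator on $\Hi=\Hi_1\hat\ten\Omega^*_{(2)}(M_2,\F_2)$ in product form $\widetilde D^{(1)}\hat\ten 1 \pm \text{(chirality)}\hat\ten\D_2$; (iii) split the heat kernel as a tensor product, integrate out the $t$-variable, and identify the $\Ca$-factor supertrace with a representative of $\ch(\ind\D_2)$ via the local index theorem for $\D_2$, and the $\B$-factor with $\eta(\D_1,A(f_1)_{\alpha,1,\ve})$; (iv) check the bookkeeping of the connections $P\di P = (P_1\ten P_2)\di(P_1\ten P_2)$ splits compatibly modulo the ideal $\ideal_i$ and modulo exact terms of the indicated mixed type. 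The main obstacle I anticipate is step~(iii)--(iv): keeping track of the noncommutative connection on the tensor product and the signs/gradings (the various $\tau_2$, $\gamma_1$, and the ungraded-vs-graded tensor conventions introduced in \S\ref{prodform}) so that the cross terms land in $(\di\Oi\Bi)\Oi\Ca_{\infty}+\Oi\Bi\di\Oi\Ca_{\infty}$ and not somewhere that spoils the equality; this is essentially the content of \cite{waprod} adapted to the present perturbed operator, and the invertibility of $\tau_2\D_1+\ov A$ (so that no boundary/transgression correction from the $\B$-factor survives) is what makes the argument go through cleanly.
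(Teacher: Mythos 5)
Your proposal captures the right mechanism — anticommuting superconnections, a heat kernel that effectively splits between the $\B$- and $\Ca$-directions, a supertrace over the $M_2$-factor producing $\ch(\ind\D_2)$, and the $\B$-factor reassembling into $\eta(\D_1,A(f_1)_{\alpha,1,\ve})$ — and this is indeed how the paper proceeds. However, two details are off. First, step~(i) is spurious and slightly backwards: the operator $\D+\ov{A(f_1)}_{\alpha,1,\ve}$ is already selfadjoint, and $A(f_1)_{\alpha,1,\ve}$ was \emph{defined} in \S\ref{signop} so that $\D_1+A(f_1)_{\alpha,1,\ve}=\tilde\D^1_{\alpha,1,\ve}$; the conjugation by $\U_{\alpha,1,\ve}$ has already been absorbed there, and no further reduction via conjugation is needed or used. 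The paper instead goes directly to the superconnection definition of $\eta$ from \cite{wazyl}, picks an adapted approximation $A$ of $A(f_1)_{\alpha,1,\ve}$, and observes that $\ov A$ is then adapted to $\D$.

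Second, the claim in step~(ii) that $\D+\ov A$ is literally a graded tensor product $\widetilde D^{(1)}\hat\ten 1 \pm (\mathrm{chirality})\hat\ten\D_2$ is not accurate. Equation~(\ref{boundprod}) exhibits $\D+\ov A$ as $\frac 12(1+\tau_2)(\D_1+A)-\frac 12(1-\tau_2)\gamma_1\tau_1(\D_1+A)\tau_1\gamma_1+\tau\gamma_1\D_2$, which is not of tensor-product form because of the $\tau_2$-projections and the $\gamma_1\tau_1$-conjugation on one block. The paper handles this by decomposing the superconnection itself as $\spc_t=\frac 12(1+\tau_2)\spcb^{\B}_t+\frac 12(1-\tau_2)\gamma_1\tau_1\spcb^{\B}_t\tau_1\gamma_1+\tau\gamma_1\spcb^{\Ca}_t$ (with $\spcb^{\B}_t$ built from $\tau_2(\D_1+\chi(t)A)$ and $\spcb^{\Ca}_t$ from $\D_2$), squaring, and then observing that the supertrace of the cross term vanishes because $\frac{d\spcb^{\Ca}_t}{dt}$ anticommutes with $\gamma_2$. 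Only after passing to the supertrace does the expression take the tensor-product form $\Tr_\sigma\sigma\frac{d\spcb^{\B}_t}{dt}e^{-(\spcb^{\B}_t)^2}e^{-(\spcb^{\Ca}_t)^2}$, at which point the argument reduces to \cite[Prop.\ 10.1]{wazyl} (rather than to a black-box product formula from \cite{waprod}). So your plan would work if step~(i) is dropped and step~(ii) is replaced by this superconnection decomposition with the vanishing of the cross term; the bookkeeping concern you raise in step~(iv) is real, but it is resolved precisely by keeping $\gamma$ out of $\Li^{e/o}$ and by the choice of the ideal $\ideal_i$.
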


\begin{proof}
For the proof we have to go back to the construction of the noncommutative $\eta$-form via superconnections, see \cite[\S 5]{wazyl}. We assume that $A$ is an adapted approximation of $A(f_1)_{\alpha,1,\ve}$ (see \cite[Def. 9.5]{wazyl} for this notion). Then $\ov{A}$ is adapted to $\D$ and (by definition) $\eta(\D,\ov{A(f_1)}_{\alpha,1,\ve})=\eta(\D,\ov A)$.

Let $\chi: \bbbr \to [0,1]$ be a smooth function such  that $\chi(x)=1$ for $x \ge 1$ and $\chi(x)=0$ for $x \le \frac 12$ and let $\sigma$ be the odd involution that generates the Clifford algebra $C_1$. In the following, it is considered as a formal parameter anticommuting with $\di_1$, $\di_2$.

Define the rescaled superconnections $$\spcb^{\B}_t:=P_1\di_1 P_1 + \sqrt t \sigma \tau_2(\D_1+ \chi(t)A) \ ,$$  
and $$\spcb^{\Ca}_t:=P_2\di_2 P_2 + \sqrt t \sigma \D_2 \ .$$ 

Note that these two superconnections anticommute with each other.

Furthermore, we set $\spc_t=P\di P + \sqrt t \sigma (\D+ \chi(t)\ov{A})$.

By definition $$\eta(\D_1,A)=\frac{1}{\sqrt{\pi}}\int_0^{\infty} \Tr_{\sigma}\sigma \frac{d\spc_t}{dt} e^{-\spc_t^2} ~dt \ .$$

Then 
$$\spc_t= \frac 12(1+\tau_2)\spcb^{\B}_t + \frac 12 (1-\tau_2)\gamma_1\tau_1 \spcb^{\B}_t\tau_1\gamma_1 + \tau\gamma_1 \spcb^{\Ca}_t$$
 and 
 $$\spc^2_t=\frac 12(1+\tau_2)(\spcb^{\B}_t)^2 +  \frac 12 (1-\tau_2)\gamma_1\tau_1 (\spcb^{\B}_t)^2\tau_1\gamma_1+ (\spcb^{\Ca}_t)^2 \ .$$ 
It follows that
\begin{align*}
\lefteqn{\sigma \frac{d\spc_t}{dt} e^{-\spc_t^2}}\\
&=\sigma \frac 12(1+\tau_2)\frac{d \spcb^{\B}_t}{dt}e^{-(\spcb^{\B}_t)^2} e^{-(\spcb^{\Ca}_t)^2} \\
& \quad + \sigma\frac 12 (1-\tau_2)\gamma_1\tau_1\frac{d \spcb^{\B}_t}{dt}e^{-(\spcb^{\B}_t)^2} e^{-(\spcb^{\Ca}_t)^2}\tau_1 \gamma_1\\
&\quad + \sigma \frac{d\spcb^{\Ca}_t}{dt} \bigl(\frac  12(1+\tau_2) e^{-(\spcb^{\B}_t)^2} e^{-(\spcb^{\Ca}_t)^2} + \frac 12 (1-\tau_2)\gamma_1\tau_1 e^{-(\spcb^{\B}_t)^2} e^{-(\spcb^{\Ca}_t)^2}\tau_1 \gamma_1\bigr)\ .
\end{align*}

Since $\frac{d\spcb^{\Ca}_t}{dt}$ anticommutes with $\gamma_2$ while the last factor in the last line commutes with $\gamma_2$, the trace $\Tr_{\sigma}$ of the last line vanishes.

Thus 
$$\Tr_{\sigma}\sigma \frac{d\spc_t}{dt} e^{-\spc_t^2}=\Tr_{\sigma}\sigma\frac{d \spcb^{\B}_t}{dt}e^{-(\spcb^{\B}_t)^2} e^{-(\spcb^{\Ca}_t)^2} \ .$$ 

Now the argument is as in the proof of \cite[Prop. 10.1]{wazyl}.

\end{proof}

The proposition implies that we get a product formula for $\eta(\D,A(f)_{\alpha,1,\ve})$ by comparing it with $\eta(\D,\ov{A(f_1)}_{\alpha,1,\ve})$. For that aim we consider how the other operators defined in \S \ref{setting} behave under taking products.

First we consider the property of being $\ve$-spectrally concentrated.

\begin{lem}
\begin{enumerate}
\item The operator $\phi_{\ve_1}(\D)\phi_{\ve_2}(\D_1)\phi_{\ve_2}(\D_2)$ equals
$\phi_{\ve_1}(\D)$ if $\ve_2>2\ve_1$ and it equals $\phi_{\ve_2}(\D_1)\phi_{\ve_2}(\D_2)$ if $\ve_1>4\ve_2$.
\item Assume that $A$ on $\Hi_1$ is $\ve_1$-spectrally concentrated with respect to $\D_1$ and $B$ on $\Hi_2$ is $\ve_2$-spectrally concentrated with respect to $\D_2$. Then $A\ten B$ is $\ve$-spectrally concentrated with respect to $\D$ for $\ve =2\max(\ve_1,\ve_2)$.
\end{enumerate}
\end{lem}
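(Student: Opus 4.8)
The plan is to prove the two parts separately, with part (1) being a straightforward spectral-theory computation and part (2) a short deduction from it.

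For part (1), the key observation is that $\D^2 = \D_1^2 + \D_2^2$ (noted in the text), so that $\phi_{\ve_1}(\D) = \phi_{\ve_1}(\sqrt{\D_1^2+\D_2^2})$ (since $\phi_{\ve_1}$ is even) is, via the joint functional calculus of the commuting operators $\D_1^2$ and $\D_2^2$, supported on the region $\{\lambda_1^2 + \lambda_2^2 < \ve_1^2/4\}$ in the joint spectrum, where $\lambda_i$ ranges over $\spec(\D_i)$. Meanwhile $\phi_{\ve_2}(\D_1)\phi_{\ve_2}(\D_2)$ is the joint spectral projection-type operator which equals $1$ on $\{|\lambda_1| \le \ve_2/4,~|\lambda_2| \le \ve_2/4\}$ and vanishes outside $\{|\lambda_1| < \ve_2/2\} \cap \{|\lambda_2| < \ve_2/2\}$. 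If $\ve_2 > 2\ve_1$, then on the support of $\phi_{\ve_1}(\D)$ we have $|\lambda_i| \le \sqrt{\lambda_1^2+\lambda_2^2} < \ve_1/2 < \ve_2/4$, so $\phi_{\ve_2}(\D_1)\phi_{\ve_2}(\D_2) = 1$ there; hence the product is $\phi_{\ve_1}(\D)$. Conversely, if $\ve_1 > 4\ve_2$, then on the support of $\phi_{\ve_2}(\D_1)\phi_{\ve_2}(\D_2)$ (where $|\lambda_i| < \ve_2/2$) we have $\sqrt{\lambda_1^2+\lambda_2^2} < \ve_2/\sqrt 2 < \ve_2 < \ve_1/4$, so $\phi_{\ve_1}(\D) = 1$ there; hence the product is $\phi_{\ve_2}(\D_1)\phi_{\ve_2}(\D_2)$. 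I would phrase this cleanly in terms of the commuting self-adjoint pair $(\D_1,\D_2)$ and their joint spectral measure rather than writing out approximations.

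For part (2), recall from the discussion after the definition of $\ve$-spectrally concentrated that $A$ being $\ve_1$-spectrally concentrated means there is $\phi \in \C_c(\bbbr)$ supported near $[-\ve_1,\ve_1]$ (one may take $\phi = \phi_{\ve_1'}$ for suitable $\ve_1' $, or directly use that $\phi_{\ve_1}(\D_1) A = A \phi_{\ve_1}(\D_1) = A$ — here I would be slightly careful and instead say: $A = \phi(\D_1) A \phi(\D_1)$ for some compactly supported $\phi$ which equals $1$ near $[-\ve_1,\ve_1]$ and vanishes outside $[-2\ve_1,2\ve_1]$, say $\phi = \phi_{4\ve_1}$), and similarly $B = \psi(\D_2) B \psi(\D_2)$ with $\psi = \phi_{4\ve_2}$. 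Then $A\ten B = \phi_{4\ve_1}(\D_1)\phi_{4\ve_2}(\D_2)\,(A\ten B)\,\phi_{4\ve_1}(\D_1)\phi_{4\ve_2}(\D_2)$, using that the $\D_1$- and $\D_2$-functional calculi commute and act on the two tensor factors. Set $\ve = 2\max(\ve_1,\ve_2)$ and $\ve' = 4\max(\ve_1,\ve_2)$, so $\ve' > 2\ve$ is false — rather $\ve' = 2\ve$; I would instead take $\ve$ slightly larger, or apply part (1) with the roles adjusted. Concretely: choosing any $\ve_0 > \max(\ve_1,\ve_2)$ and replacing $\phi_{4\ve_1},\phi_{4\ve_2}$ by $\phi_{4\ve_0},\phi_{4\ve_0}$ (still fixing $A\ten B$, since enlarging a function that is already $1$ on the support does no harm), part (1) with $\ve_1 \rightsquigarrow 4\ve_0$ and $\ve_2 \rightsquigarrow 4\ve_0$... gives $\phi_{\ve}(\D)\phi_{4\ve_0}(\D_1)\phi_{4\ve_0}(\D_2) = \phi_{4\ve_0}(\D_1)\phi_{4\ve_0}(\D_2)$ provided $\ve > 16\ve_0$. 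Then for any $\xi \in \C_c(\bbbr)$ with $\supp \xi \cap [-\ve,\ve] = \emptyset$, we get $\xi(\D)(A\ten B) = \xi(\D)\phi_{\ve}(\D)\phi_{4\ve_0}(\D_1)\phi_{4\ve_0}(\D_2)(A\ten B) \cdot (\cdots) = 0$ since $\xi(\D)\phi_{\ve}(\D) = 0$; and symmetrically on the right. Thus $A \ten B$ is $\ve$-spectrally concentrated for $\ve$ large enough, and then also for $\ve = 2\max(\ve_1,\ve_2)$ by monotonicity in $\ve$ of the defining condition — being $\ve$-spectrally concentrated for some $\ve$ forces it for all larger $\ve$, but here we want the reverse, so in fact we should just directly check $\ve = 2\max(\ve_1,\ve_2)$ works: take $\phi = \phi_{2\ve_1/?}$...

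The main obstacle is purely bookkeeping: tracking the numerical constants so that the claimed value $\ve = 2\max(\ve_1,\ve_2)$ (rather than some larger multiple) comes out, which requires being careful about whether one uses $\supp \phi_\ve \subset (-\ve/2,\ve/2)$ with $\phi_\ve = 1$ on $[-\ve/4,\ve/4]$, and choosing the auxiliary cutoffs for $A$ and $B$ as tightly as possible (e.g. $A = \phi_{2\ve_1}(\D_1) A \phi_{2\ve_1}(\D_1)$ forces $A$'s spectral support into $(-\ve_1,\ve_1)$, so $A\ten B$'s $\D$-support lies in $\{\sqrt{\lambda_1^2+\lambda_2^2} < \sqrt2\max(\ve_1,\ve_2)\} \subset \{|\lambda| < 2\max(\ve_1,\ve_2)\}$, whence $\phi_{2\max(\ve_1,\ve_2)}(\D)(A\ten B) = A \ten B$, giving exactly the stated $\ve$). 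Once the constants are pinned down, both parts are immediate from the joint functional calculus of the commuting pair $(\D_1^2,\D_2^2)$ and the identity $\D^2 = \D_1^2 + \D_2^2$; no analytic subtlety beyond this is involved.
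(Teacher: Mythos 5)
Your part (1) is exactly the paper's argument: the paper invokes the Gelfand--Naimark $C^*$-homomorphism $C_0([0,\infty)^2) \to B(\Hi)$, $f\ten g \mapsto f(\D_1^2)g(\D_2^2)$, uses the identity $\D^2=\D_1^2+\D_2^2$ to write $\phi_{\ve}(\D)=\psi_{\ve}(\D^2)$ with $\psi_{\ve}(x)=\phi_{\ve}(\sqrt x)$, and then tracks the same support/vanishing properties of $\psi_\ve$ that you use (with the same constants). So part (1) matches. For part (2) the paper says only ``the arguments are similar,'' and your plan fills in detail in the right spirit, but one step is not quite correct: the identity $A=\phi_{2\ve_1}(\D_1)A\phi_{2\ve_1}(\D_1)$ does not follow from $A$ being $\ve_1$-spectrally concentrated. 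That definition gives $\psi(\D_1)A=A\psi(\D_1)=0$ only for $\psi\in\C_c(\bbbr)$ with $\supp\psi\cap[-\ve_1,\ve_1]=\emptyset$; hence $1-\phi$ must vanish on a neighbourhood of $[-\ve_1,\ve_1]$, whereas $\phi_{2\ve_1}$ equals $1$ only on $[-\ve_1/2,\ve_1/2]$ and so $1-\phi_{2\ve_1}$ is supported on $\bbbr\setminus[-\ve_1/2,\ve_1/2]$, which meets $[-\ve_1,\ve_1]$. The correct version is that for every $\delta>0$ one can choose $\phi\in\C_c(\bbbr)$ with $\phi=1$ near $[-\ve_1,\ve_1]$ and $\supp\phi\subset(-\ve_1-\delta,\ve_1+\delta)$, fixing $A$; this places the spectral support of $A$ in $[-\ve_1,\ve_1]$ (up to $\delta$), and then $\sqrt{\ve_1^2+\ve_2^2}\le\sqrt 2\max(\ve_1,\ve_2)<2\max(\ve_1,\ve_2)$ gives the stated $\ve$ with room to spare. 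Once you replace the $\phi_{2\ve_1}$ claim by this approximation argument (or simply by the paper's observation that there exists some $\phi\in\C_c(\bbbr)$ with $\phi(\D_1)A=A\phi(\D_1)=A$ and support arbitrarily close to $[-\ve_1,\ve_1]$), the rest of your bookkeeping works; the extensive back-and-forth about $\ve_0$ and $16\ve_0$ can then be dropped.
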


\begin{proof}
(1) From the Gelfand--Naimark theorem it follows that
 there is a $C^*$-homomorphism 
$$C_0([0,\infty)^2) \to B(\Hi), f\ten g \mapsto (f\ten g)(\D_1^2,\D_2^2):=f(\D_1^2)g(\D_2^2) \ .$$

For a positive function $f\in C_c([0,\infty))$ we can define the function $\tilde f(x,y):=f(x+y) \in C_0([0,\infty)^2)$. It holds that $\tilde f(\D_1^2,\D_2^2)=f(\D_1^2+\D_2^2)$.

Set $\psi_{\ve}(x)=\phi_{\ve}(\sqrt x)$ for $x\ge 0$. Then $\psi_{\ve}(\D^2)=\phi_{\ve}(\D)$. Note that $\psi_{\ve}(x)=0$ for $x \ge (\frac{\ve}{2})^2$ and $\psi_{\ve}(x)=1$ for $x \in [0,(\frac{\ve}{4})^2]$. 
Thus for $(\frac{\ve_2}{4})^2>(\frac{\ve_1}{2})^2$
$$\psi_{\ve_1}(x+y)\psi_{\ve_2}(x)\psi_{\ve_2}(y)=\psi_{\ve_1}(x+y) \ ,$$ 
and for $(\frac{\ve_1}{4})^2>2(\frac{\ve_2}{2})^2$
$$\psi_{\ve_1}(x+y)\psi_{\ve_2}(x)\psi_{\ve_2}(y)=\psi_{\ve_2}(x)\psi_{\ve_2}(y) \ .$$
The assertion now follows.

(2) The arguments are similar.
\end{proof}

We have that $T(p)=T(p_1) \ten 1$.  (We suppress the dependence on $v \in \Omega_c^*(I^k)$ from the notation.)  Furthermore, if $Y_1$ fulfills $1+T(p_1)^{\aj}T(p_1)=d_{N_1}Y_1+ Y_1d_{N_1}$, then $Y=Y_1\ten 1$ fulfills $1+T(p)^{\aj}T(p)=d_{N}Y+ Yd_{N}$. Clearly, the operators $\T_{\ve}(p)$ and $\Y_{\ve}$ from Lemma \ref{lemYeps} cannot be decomposed as easily. Therefore we need a variety of new operators which serve to interpolate between $\T_{\ve}(p)$ and $\Tl_{\ve}(p):=\T_{\ve}(p_1) \ten 1$. 

We set
\begin{align*}
\Tlp_{\ve_1,\ve_2}(p)&:=\phi_{\ve_2}(\D_2)\Tl_{\ve_1}(p)=\phi_{\ve_2}(\D_2)\phi_{\ve_1}(\D_{M_1})T(p_1)\phi_{\ve_1}(\D_{N_1})\ ,\\
\T^{ip}_{\ve_1,\ve_2}(p)&:=\phi_{\ve_1}(\D_M)\Tlp_{\ve_2,\ve_2}(p)\phi_{\ve_1}(\D_N) \ .
\end{align*}
Both are operators from $\Omega^*_{(2)}(N,\F_N)$ to $\Omega^*_{(2)}(M,\F_M)$.
The index $s$ stands for ``smoothing'' and the index $ip$ for ``interpolation''. Namely, for $\ve_1>4\ve_2$ by the previous lemma $\T^{ip}_{\ve_1,\ve_2}(p)=\Tlp_{\ve_2,\ve_2}(p)$, while for $\ve_2>2\ve_1$ it holds that $\T^{ip}_{\ve_1,\ve_2}(p)=\T_{\ve_1}(p)$.  

By the previous lemma the operator $\Tlp_{\ve_1,\ve_2}(p)$ is $2\max(\ve_1,\ve_2)$-spectrally concentrated. Furthermore, $\Tlp_{\ve_1,\infty}(p)=\Tl_{\ve_1}(p)$.

Let $J$ be a compact subset of $(0,\infty]\times (0,\infty) \cup (0,\infty) \times (0,\infty]$. Then $\T^{ip}(p)$ and $\Tlp(p)$, considered as families indexed by $(\ve_1,\ve_2) \in J$, define bounded operators between the Hilbert $C(J)$-modules $C(J,\Omega^*_{(2)}(N,\F_N))$ and $C(J,\Omega^*_{(2)}(M,\F_M))$ and between $C(J,\Omega^*_{H^1}(N,\F_N))$ and $C(J,\Omega^*_{H^1}(M,\F_M))$. 

We choose an operator $\Y^1$ on $C(J,\Omega^*_{(2)}(N_1,\F_{N_1}))$ such that $\T(p_1)$, $\Y^1$ fulfill an analogue of Lemma \ref{lemYeps}. In particular, there is a decomposition $\Y^1=(\Y^1)' + (\Y^1)''$.
 We set $\Yl=\Y^1 \ten 1$. Then the operators $\Tl(p)$, $\Yl$ fulfill an analogue of Lemma \ref{lemY}. Here we  understand $\Tl(p)$ as an operator from $C(J,\Omega^*_{(2)}(N,\F_N))$ to $C(J,\Omega^*_{(2)}(M,\F_M))$ and $\Yl$ as an operator on $C(J,\Omega^*_{(2)}(N,\F_N))$.

\begin{lem}
There are bounded operators $\Ylp$ and $\Y^{ip}$ on $C(J,\Omega^*_{(2)}(N,\F_N))$ such that an analogue of Lemma \ref{lemY} holds for $\Ylp$, $\Tlp(p)$ and for $\Y^{ip}$, $\T^{ip}(p)$. 
Furthermore, the operators $\Ylp$ and $\Y^{ip}$ can be chosen such that the following holds:

\begin{enumerate}
\item The operator $\Ylp$ decomposes as $(\Ylp)' + (\Ylp)''$ such that $(\Ylp)'_{\ve_1,\ve_2}$ is $2\max(\ve_1,\ve_2)$-spectrally concentrated and $(\Ylp)''$ commutes with $\D_N^2$. It holds that $\Ylp_{\ve_1,\infty}=\Yl_{\ve_1}$.
\item The operator $\Y^{ip}$ decomposes as $(\Y^{ip})' + (\Y^{ip})''$ such that $(\Y^{ip})'_{\ve_1,\ve_2}$ is spectrally concentrated if $\ve_1$ or $\ve_2$ is finite, and $(\Y^{ip})''$ commutes with $\D_N^2$. If $\ve_1>10\ve_2$, then $\Y^{ip}_{\ve_1,\ve_2}=\Ylp_{\ve_2,\ve_2}$. If $\ve_2>4\ve_1$, then $\Y^{ip}_{\ve_1,\ve_2}=\Y_{\ve_1}$. 
\end{enumerate}
\end{lem}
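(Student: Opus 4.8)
The plan is to run the argument in parallel with the proof of Lemma~\ref{lemYeps} (which follows \cite[Lemma~9.7]{ps}), now for the operators $\Tlp(p)$ and $\T^{ip}(p)$ in place of $\T_v(p)$, and to build every primitive from the single operator $\Yl=\Y^1\ten 1$ (with $\Y^1$ and its decomposition $\Y^1=(\Y^1)'+(\Y^1)''$ as fixed in \S\ref{prodform}), which by the remarks before the lemma is an admissible $Y$ for $\Tl(p)=\T(p_1)\ten 1$. For $\Ylp$ I would exploit that $\phi_{\ve_2}$ is even, so $\phi_{\ve_2}(\D_2)=\psi_{\ve_2}(\D_2^2)$ is a function of the Hodge Laplacian of $M_2$; hence it commutes with $d_N$, with $d_M$ and with $\Tl(p)$ (which acts on the first factor in degree $0$), is $\aj$-selfadjoint, and from $\Tlp_{\ve_1,\ve_2}(p)=\phi_{\ve_2}(\D_2)\Tl_{\ve_1}(p)$ one checks that $\phi_{\ve_2}(\D_2)\Yl\phi_{\ve_2}(\D_2)$ is a primitive of $\phi_{\ve_2}(\D_2)^2+\Tlp(p)^{\aj}\Tlp(p)$. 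It then remains to add a primitive of $1-\phi_{\ve_2}(\D_2)^2$, and I would take $W:=d_2^*\tilde\zeta_{\ve_2}(\D_2)$ with $\tilde\zeta_{\ve_2}(x)=(1-\phi_{\ve_2}(x)^2)x^{-2}$: since $\D_2^2=d_2d_2^*+d_2^*d_2$, $\tilde\zeta_{\ve_2}(\D_2)$ commutes with $d_2,d_2^*,d_{N_1}$, and $d_2^*$ anticommutes with $d_{N_1}$ by the sign convention of \S\ref{prodform}, one gets $d_NW+Wd_N=\D_2^2\tilde\zeta_{\ve_2}(\D_2)=1-\phi_{\ve_2}(\D_2)^2$, and, exactly as for the operator $U^2$ in the proof of Lemma~\ref{lemYeps}, $W$ commutes with $\D_N^2$ and is bounded on $C(J,\Omega^*_{(2)}(N,\F_N))$ and on $C(J,\Omega^*_{H^1}(N,\F_N))$. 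Symmetrizing $\phi_{\ve_2}(\D_2)\Yl\phi_{\ve_2}(\D_2)+W$ by the device at the end of the proof of Lemma~\ref{lemY} produces $\Ylp$, and I would let $(\Ylp)'$ be the symmetrization of $\phi_{\ve_2}(\D_2)\bigl((\Y^1)'\ten 1\bigr)\phi_{\ve_2}(\D_2)=(\Y^1)'\ten\phi_{\ve_2}(\D_2)^2$ and $(\Ylp)'':=\Ylp-(\Ylp)'$. Since $(\Y^1)'_{\ve_1}$ is $\ve_1$-spectrally concentrated with respect to $\D_{N_1}$ and $\phi_{\ve_2}(\D_2)^2$ is $\ve_2$-spectrally concentrated with respect to $\D_2$, the preceding lemma makes $(\Ylp)'_{\ve_1,\ve_2}$ $2\max(\ve_1,\ve_2)$-spectrally concentrated with respect to $\D_N$ (and the $\aj$-adjoint of a spectrally concentrated operator is spectrally concentrated, so the symmetrization does no harm); $(\Ylp)''$ commutes with $\D_N^2$; and at $\ve_2=\infty$ one has $\phi_\infty=1$, $\tilde\zeta_\infty=0$, $W=0$, so $\Ylp_{\ve_1,\infty}=\Yl_{\ve_1}$. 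The degree, domain, $H^1$-boundedness and symmetry properties, as well as convexity, are then checked exactly as in Lemmas~\ref{lemY} and \ref{lemYeps}.

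For $\Y^{ip}$ I would note that $\T^{ip}_{\ve_1,\ve_2}(p)=\phi_{\ve_1}(\D_M)\Tlp_{\ve_2,\ve_2}(p)\phi_{\ve_1}(\D_N)$ arises from $\Tlp_{\ve_2,\ve_2}(p)$ by the $\ve_1$-cutoff exactly as $\T_{v,\ve}(p)$ arises from $T_v(p)$ in Lemma~\ref{lemYeps}, and repeat that construction with $(T_v(p),Y)$ replaced by $(\Tlp_{\ve_2,\ve_2}(p),\Ylp_{\ve_2,\ve_2})$ and $\ve$ by $\ve_1$: the homotopy $\phi_{t,\ve_1}$ produces $Z^{ip}$ with $\Tlp_{\ve_2,\ve_2}(p)-\T^{ip}_{\ve_1,\ve_2}(p)=d_MZ^{ip}+Z^{ip}d_N$; then $\Ylp_{\ve_2,\ve_2}-(Z^{ip})^{\aj}\Tlp_{\ve_2,\ve_2}(p)+\T^{ip}(p)^{\aj}Z^{ip}$ is a primitive of $1+\T^{ip}(p)^{\aj}\T^{ip}(p)$; and finally one conjugates its spectrally concentrated constituents by $\phi_{2\ve_1}(\D_N)$, leaves the $\D_N^2$-commuting summand $(\Ylp)''_{\ve_2,\ve_2}$ untouched, adds a correction built from $d_N^*$ and a bounded function of $\D_N^2$ to repair the resulting defect just as $U^1+U^2$ do in Lemma~\ref{lemYeps}, and symmetrizes. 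Taking $(\Y^{ip})''$ to be the $\D_N^2$-commuting terms and $(\Y^{ip})'$ the rest: for finite $\ve_1$ the latter is $2\ve_1$-spectrally concentrated because of the outer factors $\phi_{2\ve_1}(\D_N)$, and for $\ve_1=\infty$ (so $\ve_2<\infty$) one has $\phi_\infty=1$, $g_\infty=0$, so $Z^{ip}$ and the correction vanish and $(\Y^{ip})'_{\infty,\ve_2}=(\Ylp)'_{\ve_2,\ve_2}$, which is again spectrally concentrated.

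The two boundary identities should then come out of a spectral bookkeeping. If $\ve_2>4\ve_1$, the projections $\phi_{\ve_1}(\D_M),\phi_{\ve_1}(\D_N)$ are supported where $\D_M^2$ resp.\ $\D_N^2$ is $<(\ve_1/2)^2$, and there $\phi_{\ve_2}(\D_{M_1})=\phi_{\ve_2}(\D_2)=1$ and $\tilde\zeta_{\ve_2}(\D_2)=0$; hence every $\ve_2$-dependent factor in $\T^{ip}$, in $Z^{ip}$ and in $\Ylp_{\ve_2,\ve_2}$ (in particular $W_{\ve_2}$) is absorbed, the construction collapses to the one producing $\Y_{\ve_1}$ of Lemma~\ref{lemYeps} for the product manifold, and $\Y^{ip}_{\ve_1,\ve_2}=\Y_{\ve_1}$. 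If $\ve_1>10\ve_2$, then $\Tlp_{\ve_2,\ve_2}(p)$ and $(\Ylp)'_{\ve_2,\ve_2}$ are respectively $\ve_2$- and $2\ve_2$-spectrally concentrated with respect to $\D_N$, while $g_{\ve_1}$ vanishes on $[-\ve_1/4,\ve_1/4]$ and $\phi_{2\ve_1}=1$ on $[-\ve_1/2,\ve_1/2]$; since $\ve_1/4>2\ve_2$ this forces $Z^{ip}=0$, makes the conjugation by $\phi_{2\ve_1}(\D_N)$ act trivially on the terms to which it is applied, and kills the defect and its correction, so $\Y^{ip}_{\ve_1,\ve_2}=\Ylp_{\ve_2,\ve_2}$. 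The convexity assertion is immediate.

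The main obstacle is exactly this last paragraph: one has to make the choices in the preceding two paragraphs (which primitive to start from, and which summand to conjugate by $\phi_{2\ve_1}(\D_N)$) so that on the two regions all the $\ve_1$- resp.\ $\ve_2$-dependent corrections provably drop out and reproduce $\Ylp_{\ve_2,\ve_2}$ resp.\ the $\Y_{\ve_1}$ of Lemma~\ref{lemYeps}, and the matching has to be consistent with the choices already made there and in \S\ref{prodform}; this is what pins down the constants $4$ and $10$. The remaining, purely functional-analytic, points --- that the families are bounded and adjointable on $C(J,\Omega^*_{(2)}(N,\F_N))$, depend continuously on $(\ve_1,\ve_2)$ up to and including $\ve_i=\infty$, and stay bounded on the first Sobolev module --- are of the same nature as in \cite[Lemma~9.7]{ps} and \S\S\ref{setting}--\ref{signop}, and raise no new difficulty.
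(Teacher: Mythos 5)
Your treatment of $\Ylp$ is essentially the paper's. The paper takes
$(\Ylp)'_{\ve_1,\ve_2}=\phi_{\ve_2}(\D_2)^2\bigl((\Y^1)'_{\ve_1}\ten 1\bigr)$ and
$(\Ylp)''_{\ve_1,\ve_2}=h_{\ve_2}(\D_2)+\phi_{\ve_2}(\D_2)^2\bigl((\Y^1)''_{\ve_1}\ten 1\bigr)$ with
$h_{\ve_2}(x)=(1-\phi_{\ve_2}(x)^2)/x$, verifying the identity via
$1-\phi_{\ve_2}(\D_2)^2=d_M h_{\ve_2}(\D_2)+h_{\ve_2}(\D_2)d_N$. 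Your operator
$W=d_2^*\tilde\zeta_{\ve_2}(\D_2)$ is precisely the degree $-1$ component of $h_{\ve_2}(\D_2)=(d_2+d_2^*)\zeta_{\ve_2}(\D_2^2)$, so the two choices agree up to a term in the kernel of $Y\mapsto d_NY+Yd_N$; this is a cosmetic difference (your version is cleaner if one insists on a degree $-1$ primitive). The verification that both pieces commute with $\D_N^2=\D_{N_1}^2+\D_2^2$, the boundary value $\Ylp_{\ve_1,\infty}=\Yl_{\ve_1}$, and the symmetrization go exactly as you sketch.

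For $\Y^{ip}$ your route diverges from the paper's and there is a genuine gap, one that you yourself flag. You build the default primitive
$\Ylp_{\ve_2,\ve_2}-(Z^{ip})^{\aj}\Tlp_{\ve_2,\ve_2}(p)+\T^{ip}(p)^{\aj}Z^{ip}$ (same as in the paper, with the same $Z^{ip}$), and then try to produce the spectral decomposition by conjugating by $\phi_{2\ve_1}(\D_N)$ and adding $U^1$-/$U^2$-type corrections, as in the proof of Lemma~\ref{lemYeps}. The trouble is the boundary matching $\Y^{ip}_{\ve_1,\ve_2}=\Y_{\ve_1}$ for $\ve_2>4\ve_1$ with \emph{both} $\ve_i$ finite. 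In that region $\T^{ip}_{\ve_1,\ve_2}=\T_{\ve_1}$, but $Z^{ip}$ is built with the middle factor $\Tlp_{\ve_2,\ve_2}(p)$ rather than $T(p)$ and hence differs from the $Z$ entering $\Y_{\ve_1}$, and $\Ylp_{\ve_2,\ve_2}$ (built from $\Y^1_{\ve_2}\ten 1$ and $h_{\ve_2}(\D_2)$) differs from the $\ve=\infty$ primitive $\Yl=\Y^1\ten 1$ that feeds Lemma~\ref{lemYeps} for the product. Showing that the conjugation by $\phi_{2\ve_1}(\D_N)$ makes all these discrepancies disappear and that the defect corrections then coincide with $U^1_{\ve_1}+U^2_{\ve_1}$ is not a routine collapse; it requires a delicate spectral bookkeeping across the two tensor factors that you do not carry out.

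The paper avoids all of this by a much simpler device: it chooses continuous partition functions $\chi_1,\chi_2\colon J\to[0,1]$ with $\chi_1=0$ for $\ve_2<3\ve_1$, $\chi_1=1$ for $\ve_2>4\ve_1$, $\chi_2=0$ for $\ve_1<9\ve_2$, $\chi_2=1$ for $\ve_1>10\ve_2$, and sets
\begin{align*}
X_{\ve_1,\ve_2}&=(1-\chi_1-\chi_2)\bigl(\Ylp_{\ve_2,\ve_2}-Z_{\ve_1,\ve_2}^{\aj}\Tlp_{\ve_2,\ve_2}(p)+\T^{ip}_{\ve_1,\ve_2}(p)^{\aj}Z_{\ve_1,\ve_2}\bigr)\\
&\quad+\chi_1\,\Y_{\ve_1}+\chi_2\,\Ylp_{\ve_2,\ve_2}\,,
\end{align*}
and then $\Y^{ip}=\tfrac12(X+X^{\aj})$, with $(\Y^{ip})'$ defined analogously from the corresponding primed operators. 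The point is that the supports of $\chi_1$ and $\chi_2$ are disjoint and are contained in the regions where $\T^{ip}=\T_{\ve_1}$ (need $\ve_2>2\ve_1$, have $\ve_2>3\ve_1$) resp.\ $\T^{ip}=\Tlp_{\ve_2,\ve_2}$ (need $\ve_1>4\ve_2$, have $\ve_1>9\ve_2$), so that each of the three summands is itself a valid primitive for $1+\T^{ip\aj}\T^{ip}$ wherever its coefficient is nonzero; convexity (Lemma~\ref{lemYeps}(5)) then makes $X$ a primitive everywhere, and the boundary identities hold by fiat. This is what fixes the constants $4$ and $10$; they do not come from a conjugation argument. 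You should replace the conjugation step for $\Y^{ip}$ by this interpolation device --- the rest of your proposal is sound.
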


\begin{proof} We begin with $\Tlp(p)$.

We define $$(\Ylp)'_{\ve_1,\ve_2}=\phi_{\ve_2}(\D_2)^2((\Y^1)'_{\ve_1}\ten 1)  \ .$$
This operator is $2\max(\ve_1,\ve_2)$-spectrally concentrated.

With $h_{\ve_2}(x)=(1-\phi_{\ve_2}(x)^2)/x$ we set $$(\Ylp)''_{\ve_1,\ve_2}=h_{\ve_2}(\D_2)+\phi_{\ve_2}(\D_2)^2((\Y^1)''_{\ve_1}\ten 1) \ .$$

Using that 
$$\phi_{\ve_2}(\D_2)^2 +\Tlp_{\ve_1,\ve_2}(p)^{\aj}\Tlp_{\ve_1,\ve_2}(p)=d_M \phi_{\ve_2}(\D_2)^2((\Y^1)_{\ve_1}\ten 1) + \phi_{\ve_2}(\D_2)^2((\Y^1)_{\ve_1}\ten 1)d_N $$ 
and 
$$1-\phi_{\ve_2}(\D_2)^2=d_M h_{\ve_2}(\D_2) + h_{\ve_2}(\D_2) d_N \ ,$$ 
one checks that
 $\Ylp=(\Ylp)' + (\Ylp)''$ fulfills the equation in (1).

Since $\phi_{\ve_2}(\D_2)$ converges strongly to $1$ for $\ve_2 \to \infty$, it holds that $(\Ylp)_{\ve_1,\infty}=(\Y^1)_{\ve_1}\ten 1$.

Now we consider $\T^{ip}(p)$. We use ideas from the proof of Lemma \ref{lemYeps}. 
Let $\phi_{t,\ve}$ and $g_{\ve}$ be as defined there. 

We set
\begin{align*}
Z_{\ve_1,\ve_2}&=\int_0^1 g_{\ve_1}(d_M + d_M^*) \Tlp_{\ve_2,\ve_2}(p)\phi_{t,\ve_1}(d_N+ d_N^*) ~dt\\
&+\int_0^1  \phi_{t,\ve_1}(d_M + d_M^*)\Tlp_{\ve_2,\ve_2}(p)g_{\ve_1}(d_N+d_N^*) ~dt 
\end{align*}

and get
$$\Tlp_{\ve_2,\ve_2}(p)-\T^{ip}_{\ve_1,\ve_2}(p)=d_M Z_{\ve_1,\ve_2} + Z_{\ve_1,\ve_2} d_N \ .$$

The operator $Z_{\ve_1,\ve_2}$ is $2\ve_2$-spectrally concentrated. 

Choose continuous functions $\chi_1,\chi_2: J \to [0,1]$ with the following properties: It holds that $\chi_1(\ve_1,\ve_2)=0$ if $\ve_2< 3\ve_1$ and $\chi_1(\ve_1,\ve_2)=1$ if $\ve_2>4\ve_1$. 
Furthermore, $\chi_2(\ve_1,\ve_2)=0$ if $\ve_1< 9\ve_2$ and $\chi_2(\ve_1,\ve_2)=1$ if $\ve_1>10\ve_2$. 

Define
\begin{align*}
X_{\ve_1,\ve_2}&:=(1-\chi_1(\ve_1,\ve_2)-\chi_2(\ve_1,\ve_2))(\Ylp_{\ve_2,\ve_2} -Z_{\ve_1,\ve_2}^{\aj}\Tlp_{\ve_2,\ve_2}(p)+\T^{ip}_{\ve_1,\ve_2}(p)^{\aj}Z_{\ve_1,\ve_2}) \\
& \quad + \chi_1(\ve_1,\ve_2)\Y_{\ve_1}  + \chi_2(\ve_1,\ve_2)\Ylp_{\ve_2,\ve_2} \ .
\end{align*}
Since $-Z_{\ve_1,\ve_2}^{\aj}\Tlp_{\ve_2,\ve_2}(p)+\T^{ip}_{\ve_1,\ve_2}(p)^{\aj}Z_{\ve_1,\ve_2}$ is $2\ve_2$-spectrally concentrated, the operator 
$$(1-\chi_1(\ve_1,\ve_2)-\chi_2(\ve_1,\ve_2))\Bigl((\Ylp)'_{\ve_2,\ve_2} -Z_{\ve_1,\ve_2}^{\aj}\Tlp_{\ve_2,\ve_2}(p)+\T^{ip}_{\ve_1,\ve_2}(p)^{\aj}Z_{\ve_1,\ve_2}\Bigr)$$ is $2\ve_2$-spectrally concentrated.
We define $X'_{\ve_1,\ve_2}$ as the sum of this operator with $\chi_1(\ve_1,\ve_2)\Y_{\ve_1}'+\chi_2(\ve_1,\ve_2)(\Ylp)_{\ve_2,\ve_2}'$. 

Thus, if $\ve_2<\frac{1}{10} \ve_1$, then $X_{\ve_1,\ve_2}=\Ylp_{\ve_2,\ve_2}$ and $X_{\ve_1,\ve_2}'=(\Ylp_{\ve_2,\ve_2})$', while for $\ve_2>4\ve_1$ it holds that
$X_{\ve_1,\ve_2}=\Y_{\ve_1}$ and  $X_{\ve_1,\ve_2}'=\Y_{\ve_1}'$.

Now we set $\Y^{ip}=\frac 12 (X^{\aj} + X)$ and $(\Y^{ip})'=\frac 12((X')^{\aj}+X')$. 
\end{proof}

Using these definitions, we get operators $\tilde\D_{\alpha,\beta}^{\times}$, $\tilde\D_{\alpha,\beta}^{\times,s}$ and $\tilde\D_{\alpha,\beta}^{ip}$ on $C(J,\Hi)$ as in \S \ref{signop}, which are invertible for $(\alpha,\beta) \in \ins$ with $\alpha_0$ small enough and $|\alpha|\neq 0$.

From the previous lemma one obtains as in the proof of Lemma \ref{smooth}:

If $\ve_1,\ve_2 \in (0,\infty)$, then the operator $$\tilde \D_{\alpha,\beta,\ve_1,\ve_2}^{\times,s}-\D=:A^{\times,s}_{\alpha,\beta,\ve_1,\ve_2}$$ 
is smoothing since it is spectrally concentrated.

If $\ve_1 \in (0,\infty)$ or $\ve_2 \in (0,\infty)$, then the operator $$\tilde\D_{\alpha,\beta,\ve_1,\ve_2}^{ip}-\D=:A^{ip}_{\alpha,\beta,\ve_1,\ve_2}$$ 
is smoothing.  

If $\ve_2<\infty$, it holds that $$A^{ip}_{\alpha,\beta,\infty,\ve_2}=A^{\times,s}_{\alpha,\beta,\ve_2,\ve_2} \ ,$$
and if $\ve_1<\infty$, then $$A^{ip}_{\alpha,\beta,\ve_1,\infty}=A(f)_{\alpha,\beta,\ve_1}$$

For $\alpha \in \ins, ~|\alpha|\neq 0$ the $\eta$-form $\eta(\D,A^{ip}_{\alpha,1,\ve_1,\ve_2})$ is well-defined in $\Oi\Ai/\ov{[\Oi\Ai,\Oi\Ai]_s + \di\Oi\Ai}$ and does not depend on $(\ve_1,\ve_2) \in J$. Similarly,  $\eta(\D,A^{\times,s}_{\alpha,1,\ve_1,\ve_2})$ is well-defined and does not depend on $\ve_1,\ve_2 \in (0,\infty)$.
By choosing $J$ appropriately, we conclude:

\begin{prop}
\label{eqeta}
For $\ve, \ve_1,\ve_2 \in (0,\infty)$ and $|\alpha|\neq 0$ small enough
$$\eta(\D,A(f)_{\alpha,1,\ve})=\eta(\D,A^{\times,s}_{\alpha,1,\ve_1,\ve_2}) \ .$$
\end{prop}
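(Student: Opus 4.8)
The plan is to use the homotopy invariance of the $\eta$-form under paths of invertible perturbations, exactly as was done in \S\ref{rhodef} to prove that $\eta(f)$ is independent of the choices, but now applied to the two-parameter family of perturbations constructed above. Concretely, I would work on the Hilbert $C(J)$-module $C(J,\Hi)$ for a suitable compact $J \subset (0,\infty]\times(0,\infty) \cup (0,\infty)\times(0,\infty]$ and exploit that $\eta(\D,A^{ip}_{\alpha,1,\ve_1,\ve_2})$ is well-defined and constant in $(\ve_1,\ve_2)\in J$, using the vanishing of the noncommutative spectral flow along any path within the family of invertible operators $\D + A^{ip}_{\alpha,1,\ve_1,\ve_2}$ together with the Atiyah--Patodi--Singer index theorem for Dirac operators over $C^*$-algebras (Theorem 9.4 in \cite{wazyl}), whose local contribution vanishes as in \S\ref{rhodef}.

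The key steps, in order, are as follows. First, choose $J$ so that it contains both a point of the form $(\ve,\infty)$ with $\ve_1<\infty$ and a point of the form $(\ve_2,\ve_2)$ with $\ve_2<\infty$, and so that $J$ is connected; for instance take $J$ to be a path from $(\ve,\infty)$ to $(\ve_2,\ve_2)$ lying inside $(0,\infty]\times(0,\infty)\cup(0,\infty)\times(0,\infty]$ on which all operators $\D + A^{ip}_{\alpha,1,\ve_1,\ve_2}$ are invertible (their invertibility for $|\alpha|\neq 0$ was established right before the statement). Second, invoke the identities recorded just above the proposition: $A^{ip}_{\alpha,1,\ve_1,\infty}=A(f)_{\alpha,1,\ve_1}$ for $\ve_1<\infty$, and $A^{ip}_{\alpha,1,\infty,\ve_2}=A^{\times,s}_{\alpha,1,\ve_2,\ve_2}$ for $\ve_2<\infty$. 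Third, apply the argument that $\eta(\D,A^{ip}_{\alpha,1,\ve_1,\ve_2})$ does not depend on $(\ve_1,\ve_2)\in J$ — which follows from $\spfl=0$ along the path and the APS index theorem, since the family $(A^{ip}_{\alpha,1,\ve_1,\ve_2})_{(\ve_1,\ve_2)\in J}$ defines a bounded operator on $C(J,\Hi)$ and depends continuously in the appropriate topology, so the local index contribution integrates to zero — to conclude that $\eta(\D,A(f)_{\alpha,1,\ve})=\eta(\D,A^{ip}_{\alpha,1,\ve,\infty})=\eta(\D,A^{ip}_{\alpha,1,\ve_2,\ve_2})=\eta(\D,A^{\times,s}_{\alpha,1,\ve_2,\ve_2})$. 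Finally, since $\eta(\D,A^{\times,s}_{\alpha,1,\ve_1,\ve_2})$ is already known to be independent of $\ve_1,\ve_2\in(0,\infty)$, we may replace $\ve_2$ by arbitrary $\ve_1,\ve_2$, giving the asserted equality for all $\ve,\ve_1,\ve_2\in(0,\infty)$.

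The main obstacle I expect is purely bookkeeping: verifying that one can actually choose the compact set $J$ — equivalently a continuous path connecting $(\ve,\infty)$ to $(\ve_2,\ve_2)$ — staying inside the region $(0,\infty]\times(0,\infty)\cup(0,\infty)\times(0,\infty]$ where $A^{ip}$ is defined, and checking that along such a path the family of operators meets the hypotheses (strong continuity of $D_b:\Hi^1\to\Hi$, regularity and selfadjointness on $C(J,\Hi)$, invertibility at the endpoints, and in fact invertibility throughout so that the spectral flow vanishes rather than merely being defined) needed to apply the noncommutative spectral flow results of \cite{wancsf} and the APS index theorem of \cite{wazyl}. The invertibility throughout the family is exactly what was asserted in the sentence preceding the proposition, so the remaining work is to patch together the already-established independence statements (independence of $\eta(\D,A^{ip})$ in $(\ve_1,\ve_2)\in J$, and independence of $\eta(\D,A^{\times,s})$ in $\ve_1,\ve_2$) along overlapping choices of $J$; no new analytic input beyond \S\ref{rhodef} is required.
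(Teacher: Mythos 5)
Your proposal follows exactly the paper's intended argument, which is the one-line ``By choosing $J$ appropriately, we conclude'' following the two independence statements for $\eta(\D,A^{ip})$ and $\eta(\D,A^{\times,s})$. One bookkeeping slip: in your chain of equalities you write $\eta(\D,A^{ip}_{\alpha,1,\ve_2,\ve_2})=\eta(\D,A^{\times,s}_{\alpha,1,\ve_2,\ve_2})$, but the reduction $A^{ip}=A^{\times,s}$ occurs at $(\infty,\ve_2)$ (or more generally where $\ve_1>10\ve_2$), not on the diagonal $(\ve_2,\ve_2)$; likewise the endpoint of the path $J$ should be $(\infty,\ve_2)$, not $(\ve_2,\ve_2)$. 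You do state the correct identity $A^{ip}_{\alpha,1,\infty,\ve_2}=A^{\times,s}_{\alpha,1,\ve_2,\ve_2}$ just before, so this appears to be a slip rather than a misunderstanding, and after replacing $(\ve_2,\ve_2)$ by $(\infty,\ve_2)$ throughout, the argument is the paper's.
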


Next we compare $\eta(\D,A^{\times,s}_{\alpha,1,\ve_1,\ve_2})$ with $\eta(\D,\ov{A(f_1)}_{\alpha,1,\ve})$.

For this we need some further technical considerations concerning the product structure of the involved operators.

In the following we fix $\ve_1,\ve_2 \in (0,\infty)$ and $(\alpha,\beta) \in \ins$ and omit the indices from the notation.

We write
$$\tilde \D_1=\D_1 +  A(f_1)=(-\iu)(\tilde \tau^1\tilde \delta^1 + \tilde \delta^1 \tilde \tau^1) \ .$$
Here $\tilde \tau^1$, $\tilde \delta^1$ are perturbations of $\tau_1$, $\iu d_1$ defined as in \S \ref{signop} using $\T(p_1)$, $\Y^1$ and $\gamma_1$.

Similarly, we define the following perturbations of $\tau$, $\iu d$: the operators $\tilde\tau^{\times}$, $\tilde\delta^{\times}$ using $\Tl(p)$, $\Yl$ and $\gamma$; the operators $\tilde\tau^{\times,s}$, $\tilde\delta^{\times,s}$ using  $\Tlp(p)$, $\Ylp$ and $\gamma$; and the operators $\tilde\tau^{ip}$, $\tilde\delta^{ip}$ using $\T^{ip}$, $\Y^{ip}$ and $\gamma$.

We write
\begin{align*}
\tilde\D^{\times}:&=(-\iu)(\tilde\tau^{\times}\tilde\delta^{\times} + \tilde\delta^{\times} \tilde\tau^{\times}) \ ,\\
\tilde\D^{\times,s}:&=(-\iu)(\tilde\tau^{\times,s}\tilde\delta^{\times,s} + \tilde\delta^{\times,s}\tilde\tau^{\times,s}) \ ,\\
\tilde \D^{ip}:&=(-\iu)(\tilde\tau^{ip}\tilde\delta^{ip} + \tilde\delta^{ip}\tilde\tau^{ip}) \ .
\end{align*}

Let $A$ be an operator on $\Hi_1:=\Omega_{(2)}^*(N_1,\F_{N_1}) \oplus \Omega_{(2)}^*(M_1,\F_{M_1})$ and choose $l_A \in \bbbz/2$. We decompose 
$$A=\gamma_1^{l_A} A^e + \gamma_1^{l_A+1} A^o \ .$$  
Here $A^e$ and $A^o$ are assumed to commute and anticommute with $\gamma_1$, respectively. 

The choice of $l_A$ uniquely determines $A^e$ and $A^o$ and vice versa by 
$$A^e=\frac 12 \gamma_1^{l_A}(A+\gamma_1 A \gamma_1)$$ 
and 
$$A^o=\frac 12 \gamma_1^{l_A+1}(A-\gamma_1 A \gamma_1) \ .$$

Recall that the operators $\tilde \tau^1, \tilde \delta^1$ are build from operators $\Li^1$, $\delta^1$ defined as in \S \ref{signop}. Similarly, the operators $\tilde \tau^{\times}, \tilde \delta^{\times}$ are build from operators $\Li^{\times}$, $\delta^{\times}$. 

These operators are closely related with each other:

We decompose 
\begin{align}
\label{decomposL}
\Li^1&=(\Li^1)^e + \gamma_1 (\Li^1)^o
\end{align} 
and
\begin{align}
\label{decomposd}
\delta^1&= \gamma_1(\delta^1)^e + (\delta^1)^o \ .
\end{align}
(These equations determine $l_A=1$ for $A=\delta^1$ and $l_A=0$ for $A=\Li^1$.)

Then it holds that $$\Li^{\times}=(\Li^1)^e + \gamma (\Li^1)^o$$ 
and
$$\delta^{\times}-\iu d_2= \gamma(\delta^1)^e + (\delta^1)^o \ .$$

In order to establish a similar relation between the operators $(-\iu)(\tilde \tau^1\tilde \delta^1 + \tilde \delta^1 \tilde \tau^1)$ and $(-\iu)(\tilde\tau^{\times}(\tilde\delta^{\times}-\iu d_2) + (\tilde\delta^{\times}-\iu d_2)\tilde\tau^{\times})$, we need the following lemma.

\begin{lem}
\label{changeinvol}
If $A$ is a bounded operator on $\Hi_1$ decomposed as $A=\gamma_1^{l_A}A^e + \gamma_1^{l_A+1}A^o$ with $l_A \in \bbbz/2$, then define the operator $A'=\gamma^{l_A}A^e + \gamma^{l_A+1}A^o$ on $\Hi$. 

\begin{enumerate}
\item If $l_{A^*}=l_A$, it holds that $(A')^*=(A^*)'$.
\item If $l_{AB}=l_A+l_B$, then 
$$A'B'=(AB)' \ .$$ 
\item Let $l_A=0$ and assume that $A$ is selfadjoint. For $f \in C_0(\bbbr)$  
$$f(A')=f(A)' \ .$$
\end{enumerate}
\end{lem}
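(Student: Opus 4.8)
The idea is that passing from $\gamma_1$ to $\gamma$ is an ``even--odd relabelling'' that is multiplicative on the even/odd pieces, so the content of the lemma is just bookkeeping about how the grading label $l$ adds under products and is preserved under adjoints. Throughout I will use the two basic facts about the even/odd decomposition relative to a fixed involution: for an operator $A$ with chosen label $l_A$, the pieces $A^e$, $A^o$ commute, resp.\ anticommute, with the relevant involution, and they are recovered by $A^e=\frac12\gamma^{l_A}(A+\gamma A\gamma)$, $A^o=\frac12\gamma^{l_A+1}(A-\gamma A\gamma)$ (the same formulas with $\gamma_1$ in place of $\gamma$). Note also that $A^e$, $A^o$ themselves do not involve $\gamma_1$ or $\gamma$ at all in the product setting here: they are built only from $\Li^1$, $\delta^1$-type data and the factor $\gamma_2$, so ``the same'' $A^e$, $A^o$ make sense as operators on both $\Hi_1$ and $\Hi$, and $A'$ is literally $\gamma^{l_A}A^e+\gamma^{l_A+1}A^o$.

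For (1): since $\gamma_1$ is selfadjoint, $(A^e)^*$ commutes with $\gamma_1$ and $(A^o)^*$ anticommutes with it; the hypothesis $l_{A^*}=l_A$ says precisely that in the decomposition $A^*=\gamma_1^{l_{A^*}}(A^*)^e+\gamma_1^{l_{A^*}+1}(A^*)^o$ the even/odd pieces are $(A^*)^e=(A^e)^*$ and $(A^*)^o=(A^o)^*$ (using $\gamma_1^2=1$ to move the $\gamma_1$'s past). Then $(A')^*=(\gamma^{l_A}A^e+\gamma^{l_A+1}A^o)^*=\gamma^{l_A}(A^e)^*+\gamma^{l_A+1}(A^o)^*=(A^*)'$, because $\gamma$ is likewise selfadjoint and commutes/anticommutes with $(A^e)^*$, $(A^o)^*$ in the same pattern.

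For (2): expand $A'B'=(\gamma^{l_A}A^e+\gamma^{l_A+1}A^o)(\gamma^{l_B}B^e+\gamma^{l_B+1}B^o)$ and move each $\gamma$ to the left past $A^e$ or $A^o$, picking up a sign $+1$ through $A^e$ and $(-1)$ through $A^o$; the result is $\gamma^{l_A+l_B}(A^eB^e\pm A^oB^o)+\gamma^{l_A+l_B+1}(A^eB^o\pm A^oB^e)$ with signs dictated purely by the $\gamma$-parities, and the same computation with $\gamma_1$ gives the even/odd pieces of $AB$ for the label $l_{AB}=l_A+l_B$. Hence $A'B'=(AB)'$. For (3): when $l_A=0$ and $A$ is selfadjoint, (1) gives that $A'$ is selfadjoint, and iterating (2) gives $(A')^n=(A^n)'$ for all $n$, so $P(A')=P(A)'$ for every polynomial $P$ with $P(0)=0$; since $(\cdot)'$ is clearly linear and bounded (it is $A\mapsto \frac12(A+\gamma'\gamma_1 A\gamma_1\gamma')$-type, i.e.\ norm $\le$ const) one extends by density — polynomials vanishing at $0$ are dense in $C_0(\mathbb{R})$ in the relevant topology, or more cleanly use the resolvent: $(A'-\lambda)^{-1}-(-\lambda)^{-1}=((A-\lambda)^{-1}-(-\lambda)^{-1})'$ follows from (2) applied to $A'(A'-\lambda)^{-1}$, and the algebra generated by such resolvents is dense in $C_0(\mathbb{R})$.

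The only genuine subtlety — and the step I would be most careful about — is (3): one must make sure the map $A\mapsto A'$ is well defined and continuous on the relevant operator algebra so that the functional calculus identity survives the limit, and one must check the hypothesis $l_{A^*}=l_A$ and $l_{A^n}=n\,l_A=0$ are automatically satisfied here so that (1) and (2) apply at each stage. Once that is in place, everything else is the sign-chasing indicated above.
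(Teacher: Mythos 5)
Your overall route coincides with the paper's: (1) and (2) are direct sign-chasing with the even/odd pieces, and (3) is reduced by continuity to a dense subalgebra where (2) applies. Two details, however, are off as written.

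In (1) the intermediate claim $(A^*)^o=(A^o)^*$ is wrong. Computing $A^*=(\gamma_1^{l_A}A^e+\gamma_1^{l_A+1}A^o)^*$ and moving $\gamma_1^{l_A+1}$ past $(A^o)^*$ (which \emph{anti}commutes with $\gamma_1$) produces a sign, giving $(A^*)^o=(-1)^{l_A+1}(A^o)^*$. You omit the same sign when writing $(A')^*=\gamma^{l_A}(A^e)^*+\gamma^{l_A+1}(A^o)^*$ — the correct second summand is $(-1)^{l_A+1}\gamma^{l_A+1}(A^o)^*$. The two slips happen to cancel, so the final identity $(A')^*=(A^*)'$ still holds, but the displayed intermediate equalities do not.

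In (3) neither of your two density routes is quite right as stated. Polynomials, even those vanishing at $0$, are not dense in $C_0(\bbbr)$ in the sup norm (they blow up at infinity); the polynomial argument can be salvaged because $A$ and $A'$ are bounded, so only the restriction of $f$ to a compact interval containing both spectra matters, and there Stone--Weierstrass applies. For the resolvent route, the identity you want does follow from (2), but not by ``applying (2) to $A'(A'-\lambda)^{-1}$'', which is circular. The correct use of (2) is on $(A\pm\iu\lambda)\cdot(A\pm\iu\lambda)^{-1}=1$ with all three factors carrying label $0$: this yields $(A'\pm\iu\lambda)\bigl((A\pm\iu\lambda)^{-1}\bigr)'=1$ (and likewise on the other side), hence $\bigl((A\pm\iu\lambda)^{-1}\bigr)'=(A'\pm\iu\lambda)^{-1}$. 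The paper avoids this abstract reduction and instead compares the Neumann series of $(A'\pm\iu\lambda)^{-1}$ and of $(A\pm\iu\lambda)^{-1}$ term by term for $\lambda$ large, collecting the pieces commuting and anticommuting with $\gamma$ (resp.\ $\gamma_1$); that is the same idea but keeps the verification that the relevant label really is $0$ explicit at every step.
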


\begin{proof}
(1) It holds that $(A^*)^e=(A^e)^*$ and $(A^*)^o=(-1)^{l_A+1}(A^o)^*$. Thus
$$(A^*)'=\gamma^{l_A}(A^e)^*+\gamma^{l_A+1}(-1)^{l_A+1}(A^o)^*=(\gamma^{l_A}A^e + \gamma^{l_A+1}A^o)^*=(A')^* \ .$$

(2) It holds that $(AB)^e=A^eB^e + (-1)^{l_B+1}A^oB^o$ and $(AB)^o=(-1)^{l_B}A^oB^e+A^eB^o$. Now it is straightforward to check that 
$$(AB)'=\gamma^{l_{AB}}(AB)^e +\gamma^{l_{AB}+1}(AB)^o=(\gamma^{l_A}A^e + \gamma^{l_A+1}A^o)(\gamma^{l_B}B^e + \gamma^{l_B+1}B^o)=A'B' \ .$$

(3) By (1) the operator $A'$ is selfadjoint. 
The map
$$C_0(\bbbr) \to B(\Hi),~ f\mapsto f(A')$$ 
is bounded. Since the maps $A \mapsto A^e$ and $A \mapsto A^o$ are continuous, the map
$$C_0(\bbbr) \to B(\Hi),~ f\mapsto f(A)'$$
is bounded as well. 

If the claim holds for $f,g \in C_0(\bbbr)$, then it holds as well for $f+g$ and, by (1),  for $fg$. 
It follows that we only have to prove the claim for a subset which generates a dense algebra in $C_0(\bbbr)$. We choose the functions $(x\pm \iu \lambda)^{-1}$ with $\lambda>0$ large enough such that $\|A^o\| \|(A^e \pm \iu \lambda)^{-1}\| <\frac 12$. Here we use that $A^e$ is also selfadjoint.

The Neumann series implies that
$$(A' \pm \iu \lambda)^{-1}= (A^e \pm \iu \lambda)^{-1} \sum_{j=0}^{\infty} (-1)^j (\gamma A^o (A^e \pm \iu \lambda)^{-1})^j \ .$$
In order to calculate $(A \pm \iu \lambda)^{-1})^e$ we  collect all terms in the Neumann series for $(A \pm \iu \lambda)^{-1}$ that commute with $\gamma_1$ and get 
\begin{align*}((A \pm \iu \lambda)^{-1})^e &=(A^e \pm \iu \lambda)^{-1} \sum_{j=0}^{\infty} (\gamma_1 A^o (A^e \pm \iu \lambda)^{-1})^{2j}\\
&=(A^e \pm \iu \lambda)^{-1} \sum_{j=0}^{\infty} (\gamma A^o (A^e \pm \iu \lambda)^{-1})^{2j}\\
&=((A' \pm \iu \lambda)^{-1})^e \ .
\end{align*}
Similarly, we collect all terms anticommuting with $\gamma_1$, multiply with $\gamma_1$ and get 
\begin{align*}
((A \pm \iu \lambda)^{-1})^o &=-\gamma_1(A^e \pm \iu \lambda)^{-1} \sum_{j=0}^{\infty} (\gamma_1 A^o (A^e \pm \iu \lambda)^{-1})^{2j+1}\\
&=-\gamma (A^e \pm \iu \lambda)^{-1} \sum_{j=0}^{\infty} (\gamma A^o (A^e \pm \iu \lambda)^{-1})^{2j+1}\\
&=((A' \pm \iu \lambda)^{-1})^o \ .
\end{align*}
Now the claim follows.
\end{proof}

We decompose
\begin{align}
\label{decompos}
\tilde \D_1=(-\iu)(\tilde \tau^1\tilde \delta^1 + \tilde \delta^1 \tilde \tau^1)&=E^e + \gamma_1 E^o \ .
\end{align}

The operator $\tau_2(\tilde \tau^{\times}(\tilde \delta^{\times}-\iu d_2) + (\tilde \delta^{\times}-\iu d_2)\tilde \tau^{\times})$ is constructed from $\Li^{\times}$ and $\delta^{\times}-\iu d_2$ using only the operations discussed in the previous Lemma. Thus we get from the decompositions \ref{decomposL} and \ref{decomposd} that
\begin{align}
\label{decompos2}
(-\iu)(\tilde \tau^{\times}(\tilde \delta^{\times}-\iu d_2) + (\tilde \delta^{\times}-\iu d_2)\tilde \tau^{\times})&=\tau_2(E^e+ \gamma E^o) \ .
\end{align}

\begin{prop}
\label{product}
Let $\ve_1,\ve_2 \in (0,\infty)$. For $\alpha_0>0$ small enough there is a path $(D_x)_{x\in [0,2]}$ of selfadjoint operators on $\Hi \oplus \Hi$ with $$D_0=(\D+A_{\alpha_0,1,\ve_1,\ve_2}^{\times,s})\oplus (\D + A_{-\alpha_0,1,\ve_1,\ve_2}^{\times,s})$$ 
and $$D_2=(\D+\ov{A(f_1)}_{\alpha_0,1,\ve_1})\oplus (\D+ \ov{A(f_1)}_{-\alpha_0,1,\ve_1})$$ 
such that $D_x-(\D \oplus \D)$ is bounded and depends on $x$ in a strongly continuous way and such that
$$\spfl((D_x)_{x\in [0,2]})=0 \ .$$
\end{prop}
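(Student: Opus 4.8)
The plan is to construct the path $(D_x)_{x\in[0,2]}$ in two stages and then show the spectral flow vanishes by exhibiting an invertible family over the whole interval. For the first stage $x\in[0,1]$, I would interpolate between $A^{\times,s}_{\pm\alpha_0,1,\ve_1,\ve_2}$ and the operator built from $\Tl(p)$, $\Yl$ and $\gamma$ (i.e. $\tilde\D^{\times}_{\pm\alpha_0,1}-\D$); concretely, one uses a compact set $J\subset(0,\infty]\times(0,\infty)\cup(0,\infty)\times(0,\infty]$ containing both $(\ve_1,\ve_2)$ and $(\ve_1,\infty)$ together with the interpolation operators $\Tlp(p)$, $\T^{ip}(p)$ and $\Ylp$, $\Y^{ip}$ of the preceding lemmas, so that $A^{ip}_{\pm\alpha_0,1,\ve_1,\infty}=A(f)_{\pm\alpha_0,1,\ve_1}$ no — rather $A^{ip}_{\infty,\ve_2}=A^{\times,s}_{\ve_2,\ve_2}$ connects $A^{\times,s}$ to $A^{\times}$. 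In fact the cleanest route is: over $J$ the family $\tilde\D^{\times,s}_{\pm\alpha_0,1}$ (respectively $\tilde\D^{ip}$) is invertible for $\alpha_0$ small, hence defines an invertible regular operator on the Hilbert $C(J)$-module, hence has vanishing spectral flow along any path in $J$; pick a path in $J$ from $(\ve_1,\ve_2)$ to a point where the interpolation operator coincides with $\tilde\D^{\times}$. This gives the segment of $(D_x)$ on $[0,1]$ joining $D_0$ to $(\tilde\D^{\times}_{\alpha_0,1})\oplus(\tilde\D^{\times}_{-\alpha_0,1})$ with vanishing spectral flow on that segment.

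For the second stage $x\in[1,2]$ I would use the product decomposition. By \eqref{decompos} and \eqref{decompos2}, writing $\tilde\D_1=\tilde\D_{1,\pm\alpha_0}=E^e_{\pm}+\gamma_1 E^o_{\pm}$, the operator $\tau_2\bigl((-\iu)(\tilde\tau^{\times}(\tilde\delta^{\times}-\iu d_2)+(\tilde\delta^{\times}-\iu d_2)\tilde\tau^{\times})\bigr)$ equals $\tau_2(E^e_{\pm}+\gamma E^o_{\pm})=\overline{\tilde\D_{1,\pm\alpha_0}}$ in the notation of \eqref{boundprod}, and since $\tilde\D^{\times}_{\pm\alpha_0,1}=(-\iu)(\tilde\tau^{\times}\tilde\delta^{\times}+\tilde\delta^{\times}\tilde\tau^{\times})$ differs from $\tau_2^{-1}\cdot(\text{that})$ only by the $\iu d_2$-terms, one sees
$$\tilde\D^{\times}_{\pm\alpha_0,1}=\tau_2\overline{\tilde\D_{1,\pm\alpha_0}}+\tau\gamma_1\D_2 \qu(\text{up to the rearrangement of }\D=\tau_2\D_1+\tau\gamma_1\D_2).$$
Wait — I must not introduce an undefined macro; let me restate: one checks $\tilde\D^{\times}_{\pm\alpha_0,1}$ has the same shape as $\D+\overline{A}$ in \eqref{boundprod} with $A=A(f_1)_{\pm\alpha_0,1,\ve_1}$, namely $\frac12(1+\tau_2)(\D_1+A)-\frac12(1-\tau_2)\gamma_1\tau_1(\D_1+A)\tau_1\gamma_1+\tau\gamma_1\D_2$. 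So actually $\tilde\D^{\times}_{\pm\alpha_0,1}=\D+\overline{A(f_1)}_{\pm\alpha_0,1,\ve_1}$ on the nose, or at worst is joined to it by a short norm-continuous path of invertibles (using that $A(f_1)_{\pm\alpha_0,1,\ve_1}$ and its adapted approximation lie in a contractible family of invertible perturbations, as in \S\ref{rhodef}). Either way the segment $[1,2]$ is either constant or a path through invertible operators, so its spectral flow is $0$.

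Concatenating, $\spfl((D_x)_{x\in[0,2]})=0$. The requirement that $D_x-(\D\oplus\D)$ be bounded and strongly continuous in $x$ follows because every operator appearing is a bounded (indeed $\ve$-spectrally concentrated, hence smoothing, plus $\D_2$-curvature-type) perturbation of $\D$, and the families over $J$ and the interpolation parameters were already shown in \S\ref{setting}--\S\ref{signop} to be strongly continuous on the relevant Sobolev scale. I expect the main obstacle to be bookkeeping: verifying that the identifications \eqref{decompos}, \eqref{decompos2} via Lemma \ref{changeinvol} really do produce exactly $\D+\overline{A(f_1)}_{\pm\alpha_0,1,\ve_1}$ (and not merely something homotopic to it through invertibles), i.e. matching the definition of $\tilde\D$ built from $\Li^{\times}$, $\delta^{\times}$, $\gamma$ against the explicit block form in \eqref{boundprod}, keeping careful track of the extra $\iu d_2$ summand and the fact that $\tau_2$ cancels from $|\tau\Li^{\times}|$. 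Once that algebraic identity is in place, the vanishing of the spectral flow is immediate from the invertibility of the interpolating families and the fact that a norm-continuous path of invertible selfadjoint operators has zero spectral flow.
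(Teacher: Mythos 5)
Your first stage is essentially sound — deforming $(\ve_1,\ve_2)$ to $(\ve_1,\infty)$ inside the compact set $J$ on which $\tilde\D^{\times,s}_{\alpha_0,1}$ is an invertible regular family gives vanishing spectral flow from $\D+A^{\times,s}_{\alpha_0,1,\ve_1,\ve_2}$ to $\tilde\D^{\times}_{\alpha_0,1,\ve_1}$, and the paper indeed relies on the identity $\tilde\D^{\times,s}_{\alpha,\beta,\ve_1,\infty}=\tilde\D^{\times}_{\alpha,\beta,\ve_1}$ for that. (The paper's actual first stage is more roundabout: it sends $\alpha,\beta$ to zero along the way and compensates with a two-parameter homotopy of the spectral flow, because it also folds in the conjugation by $V_{0,0}^{-1}V_{\alpha_0,1,\ve_1}$; your invertible-family-over-$J$ route is a cleaner way to cross the same gap.)

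The second stage, however, has a genuine error. The identity you expect, $\tilde\D^{\times}_{\pm\alpha_0,1,\ve_1}=\D+\ov{A(f_1)}_{\pm\alpha_0,1,\ve_1}$, is false, and this is not bookkeeping — it is the whole reason the proposition is stated for the doubled operator with both signs of $\alpha_0$. After conjugating by the explicit unitary $V=\tfrac12(1+\tau_2)+\tfrac12(1-\tau_2)\tilde\tau^{\times}$ one finds
\[
V\tilde\D^{\times}V^{-1}=\tau_2(E^e+\gamma E^o)+\gamma_1\D_2,\qquad
V_{0,0}(\D+\ov{A(f_1)})V_{0,0}^{-1}=\tau_2E^e+\gamma_1E^o+\gamma_1\D_2 ,
\]
which differ in the odd parts: $\tau_2\gamma E^o$ versus $\gamma_1E^o$. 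These agree on the $(+1)$-eigenspace of $\tau_2\gamma_2$ but have opposite sign on the $(-1)$-eigenspace. There is no reason for a single sign of $\alpha_0$ to admit an invertible path joining $\tilde\D^{\times}_{\alpha_0,1,\ve_1}$ to $\D+\ov{A(f_1)}_{\alpha_0,1,\ve_1}$, so your fallback (``at worst joined by a short norm-continuous path of invertibles'') is exactly the gap. The paper's resolution uses the symmetry $E^e_{-\alpha}=E^e_{\alpha}$, $E^o_{-\alpha}=-E^o_{\alpha}$: on the $(-1)$-eigenspace the $\alpha_0$-copy of $\D+\ov{A(f_1)}$ matches the $-\alpha_0$-copy of $\tilde\D^{\times}$, and vice versa, so the spectral flow of the linear interpolation on the two summands of $\Hi\oplus\Hi$ cancels pairwise. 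Without doubling (or an equivalent cancellation mechanism) the argument does not close.
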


\begin{proof}
Define the operator
$$V:=\frac 12(1+\tau_2) + \frac 12(1-\tau_2)\tilde \tau^{\times} \ .$$ 
Since $\tau_2$ commutes with $\tilde \tau^{\times}$, the operator $V$ is a unitary (even an involution) on $\Hi$. For $\alpha,\beta=0$ it equals $V_{0,0}=\frac 12(1+\tau_2) - \frac 12(1-\tau_2)\tau_1$.

Since $\tau_2$ commutes with $\tilde \delta^{\times}-\iu d_2$ as well, it holds that
$$V\bigl(\tilde \tau^{\times}(\tilde \delta^{\times}-\iu d_2) + (\tilde \delta^{\times}-\iu d_2)\tilde \tau^{\times}\bigr)V^{-1}=\tilde \tau^{\times}(\tilde \delta^{\times}-\iu d_2) + (\tilde \delta^{\times}-\iu d_2)\tilde \tau^{\times} \ .$$

Furthermore,
\begin{align*} 
V(\tilde \tau^{\times}d_2 + d_2\tilde \tau^{\times})V^{-1}
&=\frac 14(1+\tau_2)(\tilde \tau^{\times}d_2 + d_2\tilde \tau^{\times})(1+\tau_2)\\
&\quad +\frac 14(1+\tau_2)(d_2 + \tilde\tau^{\times}d_2\tilde \tau^{\times})(1-\tau_2)\\
&\quad +\frac 14(1-\tau_2)(d_2 + \tilde \tau^{\times}d_2\tilde \tau^{\times})(1+\tau_2)\\
&\quad + \frac 14(1-\tau_2)(\tilde \tau^{\times}d_2 + d_2\tilde \tau^{\times})(1-\tau_2) \ .
\end{align*}

We decompose $\tilde \tau_1=t^o + \gamma_1 t^e$. Since $\tau=\tau_1 \tau_2$, it holds that $\tilde \tau^{\times}=(t^o + \gamma t^e)\tau_2$. The differential $d_2$ anticommutes and $\tau_2$ commutes with $t^o$ and $\gamma t^e$, thus $\tilde \tau^{\times}d_2\tilde \tau^{\times}=-\tau_2 d_2\tau_2$. We conclude that the first and the last term vanish: $$\frac 14(1\pm \tau_2)(\tilde \tau^{\times}d_2 + d_2\tilde \tau^{\times})(1 \pm \tau_2)=\frac 14\tilde \tau^{\times} (1\pm \tau_2)(d_2 - \tau_2 d_2\tau_2)(1\pm \tau_2)=0 \ .$$
Evaluating the second and third term, we get
\begin{align*}
V(\tilde \tau^{\times}\tilde d_2^{\times} + \tilde d_2^{\times}\tilde \tau^{\times})V^{-1}&=d_2-\tau_2d_2 \tau_2=\gamma_1\D_2 \ .
\end{align*}

Thus
\begin{align}
\label{vdv}
V\tilde \D^{\times}V^{-1}&=(-\iu)(\tilde \tau^{\times}(\tilde \delta^{\times}-\iu d_2) + (\tilde \delta^{\times}-\iu d_2)\tilde \tau^{\times}) +\gamma_1\D_2 \ .
\end{align}

Eq. (\ref{decompos2}) implies that the difference between the operator $(-\iu)(\tilde \tau^{\times}(\tilde \delta^{\times}-\iu d_2) + (\tilde \delta^{\times}- \iu d_2)\tilde \tau^{\times})$ for arbitrary $\alpha,\beta$ and its value for $\alpha,\beta=0$ is bounded, since $E^{e/o}_{\alpha,\beta}-E^{e/o}_{0,0}$ is bounded.

Hence $V_{0,0}^{-1}V\tilde \D^{\times}V^{-1}V_{0,0}-\D$ is bounded.

Now choose real-valued functions $\alpha,\beta \in \C(\bbbr)$ that are decreasing for $x\le 0.5$ and increasing for $x \ge 0.5$ and such that
\begin{itemize}
\item
$\alpha(x)=\alpha_0$ for  $x \in (-\infty,0.1] \cup [0.9,\infty)$,
\item $\alpha(x)=0$ for $x \in [0.15,0.85]$, 
\item $\beta(x)=1$ for  $x \in (-\infty,0.2]\cup [0.8,\infty)$,
\item $\beta(x)=0$ for $x \in [0.3,0.7]$.
\end{itemize}

In particular $(\alpha(x),\beta(x))\in \ins$ for all $x\in\bbbr$. 

For $x \in [0,1]$ we define 
$$D^{pos}_x:=\left\{\begin{array}{ll} \tilde \D_{\alpha(x),\beta(x),\ve_1,\ve_2}^{\times, s}=\D + A_{\alpha(x),\beta(x),\ve_1,\ve_2}^{\times, s} & x \in [0,0.6)\\
V_{0,0}^{-1}V_{\alpha(x),\beta(x),\ve_1}\tilde \D_{\alpha(x),\beta(x),\ve_1}^{\times}V_{\alpha(x),\beta(x),\ve_1}^{-1} V_{0,0}  & x \in [0.6,1]  \ .
\end{array}\right. $$

Note that $D^{pos}_x- D^{pos}_0$ is bounded. It depends on $x$ in a strongly continuous way. At $x=0.6$ this holds since $A_{0,0,\ve_1,\ve_2}^{\times, s}=0$ and $\tilde \D_{0,0,\ve_1}^{\times}=\D$. 

Furthermore, $D^{pos}_0$ and $D^{pos}_1$ are invertible.

We show that the spectral flow of $(D^{pos}_x)_{x \in [0,1]}$ vanishes.

For that aim we define functions $\tilde \alpha,\tilde \beta:[0,2]\times \bbbr \to \bbbr$ by
\begin{align*}
\tilde \alpha(t,x)&=\left\{\begin{array}{ll} \alpha(x) & t\in [0,1],~ x \in \bbbr\\
(t-1)\alpha_0+(2-t)\alpha(x) & t = (1,2],~ x \in \bbbr \ ,
\end{array} \right. \\
\tilde \beta(t,x)&=\left\{\begin{array}{ll} t+(1-t)\beta(x) & t=[0,1),~ x \in \bbbr \\
1 & t\in [1,2],~ x \in  \bbbr \ .
\end{array} \right. 
\end{align*}

Furthermore, we define $\ve_2(x)=\psi(x)^{-1}\ve_2$, where $\psi \in \C(\bbbr)$ is a decreasing function with $\psi(x)=1$ for $x\le 0.4$ and $\psi(x)=0$ for $x \ge 0.6$. Note that $\ve_2(x)=\infty$ for $x \ge 0.6$.

We set for $t \in [0,2], x \in [0,0.6)$ 
$$D_{t,x}:= \tilde \D_{\tilde \alpha(t,x),\tilde \beta(t,x),\ve_1,\ve_2(x)}^{\times, s}$$ 
 and for $t \in [0,2], x \in [0.6,1]$ 
$$D_{t,x}:=V_{\tilde \alpha(t,0.6),\tilde \beta(t,0.6),\ve_1}^{-1}V_{\tilde \alpha(t,x),\tilde \beta(t,x),\ve_1}\tilde \D_{\tilde \alpha(t,x),\tilde \beta(t,x),\ve_1}^{\times}V_{\tilde \alpha(t,x),\tilde \beta(t,x),\ve_1}^{-1} V_{\tilde \alpha(t,0.6),\tilde \beta(t,0.6),\ve_1} \ .$$

The family $(D_{t,x})_{(t,x)\in [0,2]\times [0,1]}$ defines a regular selfadjoint operator on $C([0,2]\times [0,1],\Hi)$ with domain $C([0,2]\times [0,1],\Hi^1)$. Here we use that $\tilde \D_{ \alpha, \beta,\ve_1,\infty}^{\times, s}=\tilde \D_{\alpha,\beta,\ve_1}^{\times}$. The operator $D_{t,x}$ defines a regular selfadjoint operator on $C([0,2]\times [0,1],\Hi)$ with domain $C([0,2]\times [0,1],\Hi^1)$. Note that $D^{pos}_x=D_{0,x}$.

Since $D_{t,0}$ and $D_{t,1}$ are invertible for any $t \in [0,2]$, the homotopy invariance of the spectral flow \cite[\S 4]{wancsf} implies that 
$$\spfl((D_{0,x})_{x \in [0,1]})=\spfl((D_{2,x})_{x\in [0,1]}) \ .$$ 
The right hand side vanishes since $D_{2,x}$ is invertible for any $x \in [0,1]$.

We define $D^{neg}_x$ in the same way as $D^{pos}_x$ but we replace the function $\alpha(x)$ by $-\alpha(x)$.
As before, one proves that $\spfl((D^{neg}_x)_{x\in [0,1]})=0$.

For $x \in [0,1]$ set $D_x:=(D^{pos}_x \oplus D^{neg}_x)$.

Now we compare the operator $V_{0,0}^{-1}V\tilde \D^{\times}V^{-1}V_{0,0}$ with the operator $\D+\ov{A(f_1)}$. 

We wrote in (\ref{decompos})
$$\D_1 +  A(f_1)=E^e + \gamma_1 E^o \ ,$$
and thus by (\ref{boundprod})
$$\D+ \ov{A(f_1)}=\frac 12(1+\tau_2)(E^e + \gamma_1 E^o) -\frac 12(1-\tau_2)\gamma_1\tau_1 (E^e + \gamma_1 E^o)\tau_1\gamma_1+
\tau \gamma_1 \D_2 \ .$$
We calculate
\begin{align*}
V_{0,0}(\D+\ov{A(f_1)})V_{0,0}^{-1}
&=\frac 12(1+\tau_2)(E^e + \gamma_1 E^o)-\frac 12(1-\tau_2)\gamma_1(E^e + \gamma_1 E^o)\gamma_1 \\
&\quad - \frac 12(1+\tau_2)\tau\gamma_1 \D_2 \tau_1 - \frac 12(1-\tau_2)\tau_1 \tau\gamma_1 \D_2 \\
&=\tau_2 E^e + \gamma_1 E^o + \gamma_1 \D_2 \ .
\end{align*}

On the other hand, using (\ref{decompos2}) and (\ref{vdv})
we get that $$V\tilde\D^{\times}V^{-1}=\tau_2(E^e+ \gamma E^o)+\gamma_1\D_2 \ .$$

These equations imply that the involution $\tau_2 \gamma_2$ commutes with $V_{0,0}(\D+\ov{A(f_1)})V_{0,0}^{-1}$ and with $V\tilde \D^{\times}V^{-1}$. On the positive eigenspace of $\tau_2\gamma_2$ both operators agree with each other. 

When restricted to the negative eigenspace of $\tau_2\gamma_2$, the operator $V_{0,0}(\D+\ov{A(f_1)})V_{0,0}^{-1}$ agrees with $\tau_2(E^e - \gamma E^o) +\gamma_1\D_2$.

Note that $$E^e_{\alpha,\beta,\ve_1}=E^e_{-\alpha,\beta,\ve_1}$$ 
and
$$E^o_{\alpha,\beta,\ve_1}=-E^o_{-\alpha,\beta,\ve_1} \ ,$$ since replacing $\alpha$ by $-\alpha$ in $\tilde \delta^1$ is equivalent to replacing $\gamma_1$ by $-\gamma_1$. (The formal proof of these equations uses arguments as in the proof of Lemma \ref{changeinvol}.)

Thus on the negative eigenspace of $\tau_2\gamma_2$ the operator $V_{0,0}(\D+\ov{A(f_1)}_{\alpha,\beta,\ve_1})V_{0,0}^{-1}$ agrees with the operator $V_{-\alpha,\beta,\ve_1}\tilde \D_{-\alpha,\beta,\ve_1}^{\times}V_{-\alpha,\beta,\ve_1}^{-1}$.

Recall that 
$$D_1=(V_{0,0}^{-1}V_{\alpha_0,1,\ve_1}\tilde \D_{\alpha_0,1,\ve_1}^{\times}V_{\alpha_0,1,\ve_1}^{-1} V_{0,0}) \oplus (V_{0,0}^{-1}V_{-\alpha_0,1,\ve_1}\tilde \D_{-\alpha_0,1,\ve_1}^{\times}V_{-\alpha_0,1,\ve_1}^{-1} V_{0,0}) \ .$$
For $x \in [1,2]$ set
\begin{align*}
D_x:&=(2-x)D_1 + (x-1)\bigl((\D+\ov{A(f_1)}_{\alpha_0,1,\ve_1})\oplus (\D+\ov{A(f_1)}_{-\alpha_0,1,\ve_1})\bigr) \ .
\end{align*}

It follows from the previous considerations that the spectral flow of $(D_x)_{x\in [1,2]}$ vanishes. Furthermore, $D_x-(\D \oplus \D)$ is bounded and depends in a strongly continuous way on $x$.

Thus the path $(D_x)_{x \in [0,2]}$ fulfills the conditions of the proposition.
\end{proof}

\begin{cor}
Let $\ve_1,\ve_2 \in (0,\infty)$. For $\alpha>0$ small enough it holds that 
$$\eta(\D,A_{\alpha,1,\ve_1,\ve_2}^{\times,s}) +\eta(\D,A_{-\alpha,1,\ve_1,\ve_2}^{\times,s})=\eta(\D,\ov{A(f_1)}_{\alpha,1,\ve_1})+ \eta(\D,\ov{A(f_1)}_{-\alpha,1,\ve_1}) \ .$$
\end{cor}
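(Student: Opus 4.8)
The plan is to read the Corollary off Proposition \ref{product} by the same index-theoretic argument that was used in \S\ref{rhodef} to show that $\eta(f)$ is independent of all choices. First I would fix $\ve_1,\ve_2\in(0,\infty)$ and take $\alpha>0$ small enough that every operator occurring below is invertible: small enough that $\Li_{\alpha,1}$ is invertible, that $\D+A^{\times,s}_{\pm\alpha,1,\ve_1,\ve_2}$ are invertible (as recorded after the construction of $\T^{ip}(p)$ and $\Tlp(p)$), and that $\D_1+A(f_1)_{\pm\alpha,1,\ve_1}$ are invertible, which by the discussion around (\ref{boundprod}) makes $\D+\ov{A(f_1)}_{\pm\alpha,1,\ve_1}$ invertible. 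Each of these four operators is an invertible perturbation of $\D$ by a smoothing (spectrally concentrated), respectively adapted, operator, so the four $\eta$-forms in the statement are well defined in $\Oi\Ai/\ov{[\Oi\Ai,\Oi\Ai]_s+\di\Oi\Ai}$ by the construction of \S\ref{rhodef} and the discussion preceding Proposition \ref{prodeta}.

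Next I would use that the noncommutative $\eta$-form is additive under orthogonal direct sums of operators: this is immediate from the definition via superconnections, since the heat operator of a direct sum is the direct sum of the heat operators and $\Tr_\sigma$ is additive. Taking $\alpha_0=\alpha$ and letting $(D_x)_{x\in[0,2]}$ be the path on $\Hi\oplus\Hi$ supplied by Proposition \ref{product}, additivity identifies the left-hand side of the Corollary with $\eta(D_0)$ and the right-hand side with $\eta(D_2)$. So it remains to show $\eta(D_0)=\eta(D_2)$.

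For that I would copy the argument of \S\ref{rhodef}. By Proposition \ref{product} the family $(D_x)_{x\in[0,2]}$ is a strongly continuous path of regular selfadjoint operators with common domain $\Hi^1\oplus\Hi^1$, with $D_x-(\D\oplus\D)$ bounded, with invertible endpoints, and with $\spfl((D_x)_{x\in[0,2]})=0$; as recalled in \S\ref{rhodef} the family then defines a Fredholm operator between $C([0,2],\Hi^1\oplus\Hi^1)$ and $C([0,2],\Hi\oplus\Hi)$, and the vanishing of the spectral flow forces $\ind(\ra_x-D_x)=0$ for $\ra_x-D_x$ on $C_0(\bbbr,\Hi\oplus\Hi)$ (with $D_x:=D_0$ for $x<0$, $D_x:=D_2$ for $x>2$). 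Applying the Atiyah--Patodi--Singer index theorem for Dirac operators over $C^*$-algebras (Theorem 9.4 in \cite{wazyl}) to $\ra_x-D_x$ then gives
\[
0=\ch(\ind(\ra_x-D_x))=(2\pi\iu)^{-\frac{n+1}{2}}\int_{[0,2]\times((N\cup M)\sqcup(N\cup M))} L\,\ch(\F\oplus\F)\ +\ \eta(D_0)-\eta(D_2),
\]
where $L$ denotes the Hirzebruch class of $[0,2]\times((N\cup M)\sqcup(N\cup M))$. The integral vanishes because $\int_{[0,2]}L\,\ch(\F\oplus\F)=0$, the integrand carrying no component in the $[0,2]$-direction, exactly as in \S\ref{rhodef}. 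Hence $\eta(D_0)=\eta(D_2)$, which by the additivity noted above is precisely the asserted identity.

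I do not expect a substantial obstacle at this stage: the real work — constructing a path with the prescribed endpoints, bounded perturbation and vanishing spectral flow — is exactly the content of Proposition \ref{product}, which is already available. The points that need care are (i) verifying that the four $\eta$-forms are genuinely defined, i.e. the invertibility together with the smoothing/adapted statements above, (ii) checking that the family $(D_x)$ is an admissible input for the Atiyah--Patodi--Singer index theorem over $C^*$-algebras (adaptedness of the perturbation in the $[0,2]$-direction, which is handled as in the constructions of $\eta(\D,A^{\times,s})$ and $\eta(\D,\ov{A(f_1)})$), and (iii) the additivity of $\eta$ under direct sums and the vanishing of the local index density; all of these are routine given \S\ref{setting}--\S\ref{rhodef}. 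If anything is delicate it is bookkeeping, namely lining up the interval parameter $x$ with the perturbation parameters so that Proposition \ref{product} applies with $\alpha_0=\alpha$ and the same pair $\ve_1,\ve_2$.
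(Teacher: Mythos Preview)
Your proposal is correct and follows exactly the approach the paper intends: the corollary is stated without proof because it is meant to be read off Proposition~\ref{product} via the same spectral-flow/APS argument used in \S\ref{rhodef}, together with the additivity of $\eta$-forms under direct sums, precisely as you outline. The care points you list (well-definedness of the four $\eta$-forms, admissibility for the APS theorem, vanishing of the local term) are the right ones and are handled just as you say.
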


From the corollary and Prop. \ref{prodeta} and \ref{eqeta} we conclude:

\begin{theorem}
\label{prodtheo}
In $\Oi^{\ideal}\Ai$ modulo the closure of $[\Oi^{\ideal}\Ai,\Oi^{\ideal}\Ai] + (\di_1\Oi\Bi) \hat \Omega_*\Ca_{\infty}+ \Oi\Bi \di_2 \hat \Omega_*\Ca_{\infty}$ it holds that
$$\eta(f)=\eta(f_1) \ch(\ind (\D_2)) \ .$$
\end{theorem}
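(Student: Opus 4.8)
The plan is to deduce the statement formally from the three results already at hand: Proposition~\ref{eqeta}, the Corollary preceding the statement, and Proposition~\ref{prodeta}. Fix $\ve,\ve_1,\ve_2\in(0,\infty)$ and $\alpha>0$ small enough that all the perturbed operators below are invertible, and compute $\eta(f)$ from the product data ($p=p_1\times\id$, and so on) induced by a choice of data for $f_1$; this is permissible since $\eta(f)$ is independent of all choices by Section~\ref{rhodef}.

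First I would unwind the definition $\eta(f)=\tfrac12\bigl(\eta(\D,A(f)_{\alpha,1,\ve})+\eta(\D,A(f)_{-\alpha,1,\ve})\bigr)$ and apply Proposition~\ref{eqeta} to both summands, replacing $A(f)_{\pm\alpha,1,\ve}$ by $A^{\times,s}_{\pm\alpha,1,\ve_1,\ve_2}$. Then the Corollary preceding the statement rewrites the sum $\eta(\D,A^{\times,s}_{\alpha,1,\ve_1,\ve_2})+\eta(\D,A^{\times,s}_{-\alpha,1,\ve_1,\ve_2})$ as $\eta(\D,\ov{A(f_1)}_{\alpha,1,\ve_1})+\eta(\D,\ov{A(f_1)}_{-\alpha,1,\ve_1})$; both steps are valid in $\Oi\Ai/\ov{[\Oi\Ai,\Oi\Ai]_s+\di\Oi\Ai}$. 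Passing to the coarser quotient of the statement, Proposition~\ref{prodeta} identifies $\eta(\D,\ov{A(f_1)}_{\pm\alpha,1,\ve_1})$ with $\eta(\D_1,A(f_1)_{\pm\alpha,1,\ve_1})\,\ch(\ind(\D_2))$. Since right multiplication by the fixed class $\ch(\ind(\D_2))$ is linear, averaging over the two signs of $\alpha$ gives $\tfrac12\bigl(\eta(\D_1,A(f_1)_{\alpha,1,\ve_1})+\eta(\D_1,A(f_1)_{-\alpha,1,\ve_1})\bigr)\ch(\ind(\D_2))$, whose first factor is by definition $\eta(f_1)$. Chaining these equalities gives $\eta(f)=\eta(f_1)\,\ch(\ind(\D_2))$ in the asserted quotient.

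The point that requires the most care --- and essentially the only one --- is the bookkeeping of the quotient spaces. The identities coming from Proposition~\ref{eqeta} and the Corollary hold in $\Oi\Ai/\ov{[\Oi\Ai,\Oi\Ai]_s+\di\Oi\Ai}$, whereas Proposition~\ref{prodeta} and the statement live in the further quotient of $\Oi^{\ideal}\Ai$ by the closure of $[\Oi^{\ideal}\Ai,\Oi^{\ideal}\Ai]+(\di_1\Oi\Bi)\Oi\Ca_\infty+\Oi\Bi\di_2\Oi\Ca_\infty$. I would check that this is at least as coarse as the previous quotient: the commutator ideal $\ideal$ vanishes by construction, graded commutators map into the commutator subspace of the quotient, and $\di\Oi^{\ideal}\Ai$ --- which on the dense subalgebra $\Oi\Bi\cdot\Oi\Ca_\infty$ consists of the forms $\di_1\omega+\di_2\omega$ --- lies in the closure of $(\di_1\Oi\Bi)\Oi\Ca_\infty+\Oi\Bi\di_2\Oi\Ca_\infty$; hence the chain of equalities descends consistently. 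One should also note that $\eta(f_1)\,\ch(\ind(\D_2))$ is well defined in this quotient, since changing the representative of $\eta(f_1)$ by a graded commutator or a $\di_1$-exact form, or the closed representative of $\ch(\ind(\D_2))$ by a $\di_2$-exact form, alters the product only by elements killed in the target. No new analysis is needed: the substantive input has already been supplied by Proposition~\ref{product} (vanishing spectral flow along the interpolating path) and by the superconnection computation behind Proposition~\ref{prodeta}.
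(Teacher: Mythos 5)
Your argument is exactly the chain the paper has in mind: the theorem is derived by combining Proposition~\ref{eqeta}, the corollary to Proposition~\ref{product}, and Proposition~\ref{prodeta}, and you fill in the quotient-space bookkeeping that the paper leaves implicit. This matches the paper's proof (which is literally the line ``From the corollary and Prop.~\ref{prodeta} and \ref{eqeta} we conclude:''); no discrepancies.
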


We also prove a product formula for the higher $\eta$-invariants from \S\ref{high}. The proof is independent of the previous results. It was announced in the introduction of \cite{waprod} and is given here since it uses arguments from above. We use the notation from before, however the manifold $N$ does not play any role in the following.

Assume that the ranges of the closure of $d:\Omega^{m-1}(M,\F_M)\to \Omega_{(2)}^m(M, \F_M),~ m=(\dim M+1)/2$ and of $d_1:\Omega^{m_1-1}(M_1,\F_{M_1})\to \Omega_{(2)}^{m_1}(M_1, \F_{M_1}),~ m_1=(\dim M_1+1)/2$ are closed. Then the higher $\eta$-invariants $\eta(M_1,g_{M_1})$ and $\eta(M,g_M)$ are well-defined, see \S \ref{high}. 

\begin{prop}
\label{prodhigh}
In $\Oi^{\ideal}\Ai$ modulo the closure of $[\Oi^{\ideal}\Ai,\Oi^{\ideal}\Ai] + (\di_1\Oi\Bi) \hat \Omega_*\Ca_{\infty}+ \Oi\Bi \di_2 \hat \Omega_*\Ca_{\infty}$ it holds that
$$\eta(M,g_M)=\eta(M_1,g_{M_1}) \ch(\ind (\D_{M_2})) \ .$$
\end{prop}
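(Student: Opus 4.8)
The plan is to rerun the superconnection argument from the proof of Proposition~\ref{prodeta}, now with the symmetric trivialising operator of $M_1$ in place of $A(f_1)_{\alpha,1,\ve}$, and then to identify the resulting perturbation of $\D_M$ with $\iu\inv_M\tau_M$ up to a homotopy with vanishing spectral flow. Throughout I work only with the single manifold $M=M_1\times M_2$ (the manifold $N$ plays no role), with the product metric, using the relations of \S\ref{prodform} specialised to $M$; in particular $\D_M=\tau_{M_2}\D_{M_1}+\tau_M\gamma_{M_1}\D_{M_2}$, $\D_M^2=\D_{M_1}^2+\D_{M_2}^2$, and the construction $A\mapsto\ov A$ of (\ref{boundprod}).

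First I would compute $\ov{A_1}$ for $A_1:=\iu\inv_{M_1}\tau_{M_1}$, the operator with $\eta(M_1,g_{M_1})=\eta(\D_{M_1},A_1)$. Since $\inv_{M_1}$ interchanges the parts of $W_{M_1}$ of degree $<m_1$ and $\ge m_1$ it anticommutes with $\tau_{M_1}$, and it commutes with the degree-preserving $\gamma_{M_1}$; together with $\gamma_{M_1}\tau_{M_1}=-\tau_{M_1}\gamma_{M_1}$ this gives $\gamma_{M_1}\tau_{M_1}A_1\tau_{M_1}\gamma_{M_1}=A_1$, hence $\ov{A_1}=\tfrac12(1+\tau_{M_2})A_1-\tfrac12(1-\tau_{M_2})A_1=\tau_{M_2}A_1=\iu\inv_{M_1}\tau_M$. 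By the invertibility argument of \S\ref{prodform} following (\ref{boundprod}) (using that $\D_{M_1}+A_1$ is invertible), $\D_M+\iu\inv_{M_1}\tau_M$ is invertible. Now choose an adapted approximation $A$ of $A_1$ with $\D_{M_1}+A$ invertible; then $\ov A$ is adapted to $\D_M$, $\eta(\D_{M_1},A)=\eta(\D_{M_1},A_1)=\eta(M_1,g_{M_1})$, and $\eta(\D_M,\ov A)=\eta(\D_M,\iu\inv_{M_1}\tau_M)$. With the superconnections $\spcb^{\B}_t=P_1\di_1P_1+\sqrt t\,\sigma\tau_{M_2}(\D_{M_1}+\chi(t)A)$, $\spcb^{\Ca}_t=P_2\di_2P_2+\sqrt t\,\sigma\D_{M_2}$, and $\spc_t=P\di P+\sqrt t\,\sigma(\D_M+\chi(t)\ov A)$, the argument of the proof of Proposition~\ref{prodeta} applies verbatim (the $\Tr_{\sigma}$ of the $\gamma_2$-odd cross term vanishes, and one invokes \cite[Prop.~10.1]{wazyl}) and yields, modulo the stated ideal,
$$\eta(\D_M,\iu\inv_{M_1}\tau_M)=\eta(M_1,g_{M_1})\,\ch(\ind\D_{M_2}) \ .$$

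It remains to prove $\eta(\D_M,\iu\inv_{M_1}\tau_M)=\eta(\D_M,\iu\inv_M\tau_M)$, the right-hand side being $\eta(M,g_M)$ by definition. Each $B\in\{\iu\inv_{M_1}\tau_M,\ \iu\inv_M\tau_M\}$ is bounded, selfadjoint, and anticommutes with $\D_M$ (because $\inv_{M_1}$ and $\inv_M$ anticommute with $\D_M$ while $\tau_M$ commutes with it), so each $B$ preserves the complemented Hilbert submodule $\Kappa:=\Ker\D_M$ and restricts there to an invertible operator. The homotopy $B_s:=P_\Kappa BP_\Kappa+(1-s)(1-P_\Kappa)B(1-P_\Kappa)$ keeps $\D_M+B_s$ invertible: it is block diagonal for $\Kappa\oplus\Kappa^\perp$, equal to $B|_\Kappa$ on $\Kappa$, and on $\Kappa^\perp$ its square is $\D_M^2|_{\Kappa^\perp}+(1-s)^2(B|_{\Kappa^\perp})^2\ge\D_M^2|_{\Kappa^\perp}$, which is bounded below. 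Hence its spectral flow vanishes, and since the metric is fixed the local term in the Atiyah--Patodi--Singer index theorem for Dirac operators over $C^*$-algebras vanishes as well (as in \S\ref{rhodef}), so $\eta(\D_M,B)=\eta(\D_M,P_\Kappa BP_\Kappa)$. One is thereby reduced to comparing $\iu J_0\tau_M|_\Kappa$ and $\iu J_1\tau_M|_\Kappa$ for the two symmetric involutions $J_0:=\inv_{M_1}|_\Kappa$, $J_1:=\inv_M|_\Kappa$, both anticommuting with $\tau_M|_\Kappa$. Because $\D_M^2=\D_{M_1}^2+\D_{M_2}^2$ the space $\Kappa=(\Ker\D_{M_1})\gt(\Ker\D_{M_2})$ decomposes by bidegree, and since $\dim M_1$ is odd $\tau_M=\tau_{M_1}\gt\tau_{M_2}$ pairs each bidegree component with a distinct one; on each resulting block $\tau_M$ reads $\left(\begin{smallmatrix}0&1\\1&0\end{smallmatrix}\right)$ in a suitable basis and $J_0,J_1\in\{\pm\left(\begin{smallmatrix}1&0\\0&-1\end{smallmatrix}\right)\}$. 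On a block where $J_0\ne J_1$ join them by $s\mapsto\cos(\pi s)\left(\begin{smallmatrix}1&0\\0&-1\end{smallmatrix}\right)+\sin(\pi s)\left(\begin{smallmatrix}0&-\iu\\\iu&0\end{smallmatrix}\right)$, a path of symmetric involutions anticommuting with $\tau_M$; along the induced path the operator $\D_M+\iu J_s\tau_M|_\Kappa$ (with $\iu J_s\tau_M|_\Kappa$ extended by $0$ on $\Kappa^\perp$) stays invertible, so once more the spectral flow vanishes and $\eta(\D_M,\iu\inv_{M_1}\tau_M)=\eta(\D_M,\iu\inv_M\tau_M)$. Combining this with the displayed equality proves the proposition.

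The hard part is this last comparison. The naive straight-line homotopy from $\iu\inv_{M_1}\tau_M$ to $\iu\inv_M\tau_M$ is not through invertible perturbations --- on a harmonic block where the two degree-gradings $J_0$ and $J_1$ carry opposite signs their average vanishes at the midpoint --- so one really must use the pairing of harmonic forms by $\tau_M$ (available because $\dim M_1$ is odd) to cut $\Kappa$ into two-dimensional-type blocks and rotate through the complex involution above. Everything else is a direct transcription of the arguments of \S\ref{prodform} and \S\ref{rhodef}, together with the bookkeeping, routine in the framework of \cite{wazyl}, needed to replace the various paths of bounded perturbations by nearby adapted ones without changing the endpoint $\eta$-forms.
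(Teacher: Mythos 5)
Your first step — computing $\ov{A_1}=\tau_{M_2}A_1=\iu\inv_{M_1}\tau_M$ and running the superconnection argument of Prop.~\ref{prodeta} to get $\eta(\D_M,\iu\inv_{M_1}\tau_M)=\eta(M_1,g_{M_1})\ch(\ind\D_{M_2})$ — is exactly the first half of the paper's proof. But you then miss the observation that finishes the paper's argument in one line, and replace it with a spectral-flow computation that has a real gap.

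The paper's second step: having computed $\ov A=\iu\inv_{M_1}\tau_{M_1}\tau_{M_2}$, it checks that $\ov A$ \emph{anticommutes with} $\inv_M$ (because $\inv_M$ commutes with $\inv_{M_1}$ and anticommutes with $\tau_M$). Therefore $\ov A$ is itself a symmetric trivializing operator with respect to $\inv_M$ in the sense of \S\ref{high}, and the already-established fact that $\eta(M,g_M)$ is independent of the choice of symmetric trivializing operator immediately gives $\eta(\D_M,\ov A)=\eta(M,g_M)$. No detour through $\Ker\D_M$ is needed; the whole comparison is absorbed into the cited uniqueness statement from \cite{waprod}, \cite{wazyl}, which is proven in that framework by an Atiyah--Patodi--Singer cylinder argument and applies to arbitrary bounded perturbations of this type, with no spectral-gap hypothesis.

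Your replacement argument has a genuine gap: you write $B_s=P_\Kappa BP_\Kappa+(1-s)(1-P_\Kappa)B(1-P_\Kappa)$ and assert that $\Kappa=\Ker\D_M$ is a complemented submodule and that $\D_M^2|_{\Kappa^\perp}$ is bounded below. Neither follows from the hypotheses. The proposition only assumes that $d$ has closed range in the single middle degree $m-1\to m$ (and similarly for $M_1$); over a $C^*$-algebra this gives a spectral gap for $\D_M|_{V_M}$, but it says nothing about the Laplacian in other degrees, so $0$ need not be isolated in the spectrum of $\D_M$, $P_\Kappa$ need not exist, and $\D_M^2|_{\Kappa^\perp}$ need not be bounded below. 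In fact the invertibility of $\D_M+\iu\inv_M\tau_M$ comes from $(\D_M+\iu\inv_M\tau_M)^2=\D_M^2+\inv_M^2$, which is $\ge 1$ on all of $W_M$ independently of any gap for $\D_M$ — this is precisely why the paper's framework avoids $\Ker\D_M$ altogether. Your bidegree/rotation analysis on $\Kappa$ is pleasant, but it is built on a decomposition you do not have. (Your caveat about adapted approximations is a second, smaller issue, which you correctly flag as routine; the complementedness problem is the substantive one.) To fix the proof you should either verify the paper's anticommutation $\{\ov A,\inv_M\}=0$ and cite the uniqueness of the symmetric trivializing $\eta$-form, or redo your comparison using the spectral projection $\phi_\ve(\D_M)$ for some $\ve>0$ in place of $P_\Kappa$, which always exists — but this would take you essentially back to the mechanism that the uniqueness statement already packages.
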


\begin{proof}
Set $A=\iu \inv_{M_1} \tau_{M_1}$ and, as before, $$\ov A=\frac 12(1+\tau_{M_2}) A-\frac 12(1-\tau_{M_2})\gamma_{M_1}\tau_{M_1}  A\tau_{M_1}\gamma_{M_1} \ .$$
We have by a proof analogous to the proof of Prop. \ref{prodeta} that
$$\eta(\D_{M},\ov A)=\eta(\D_{M_1},A)\ch(\ind(\D_{M_2})) \ .$$

Since $\gamma_{M_1}$ and $\tau_{M_1}$ anticommute with $A$, we have that $\ov A=\tau_{M_2} A=\iu\inv_{M_1} \tau_{M_1} \tau_{M_2}$. Thus $\ov A$ anticommutes with $\inv_M$. It follows that $\ov A$ is a symmetric trivializing operator with respect to $\inv_M$.

Since $\eta(M,g_M)$ does not depend on the choice of the symmetric trivializing operator, it holds that $\eta(M,g_M)=\eta(\D_{M},\ov A)$.
\end{proof}

\section{Additivity}

Let $L$, $M$, $N$ be odd-dimensional closed oriented Riemannian manifolds and let $f:M \to N$ and $g:L \to M$ be oriented homotopy equivalences.

With $\F_M$, $\F_N$ as in \S \ref{setting} we define $\F_L=g^*\F_M$.

\begin{prop}
\label{propadd}
It holds that $$\eta(f \circ g)+\eta(\id_M)=\eta(f)+\eta(g) \ .$$
\end{prop}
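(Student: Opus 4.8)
The plan is to realise all four $\eta$-forms as $\eta$-forms of bounded self-adjoint perturbations of one fixed signature operator, and then to join the two perturbations relevant for the two sides of the identity by a path of invertible ones; the Atiyah--Patodi--Singer index theorem for Dirac operators over $C^*$-algebras will then force the $\eta$-forms to agree, exactly as in the argument of \S\ref{rhodef} showing that $\eta(f)$ is independent of the choices. Concretely, I would set $W:=N\cup M^{op}\cup M\cup L^{op}$, with its signature operator $\D_W$, the flat bundle $\F_W$ assembled from $\F_N$, $\F_M=f^*\F_N$ and $\F_L=g^*\F_M=(f\circ g)^*\F_N$, and the Hilbert $\A$-module $\Hi_W:=\Omega_{(2)}^*(N,\F_N)\oplus\Omega_{(2)}^*(M,\F_M)\oplus\Omega_{(2)}^*(M,\F_M)\oplus\Omega_{(2)}^*(L,\F_L)$; one chooses mutually compatible submersions, forms $v$, correction operators $\Y$, functions $\phi_\ve$, projections $P$ and parameters $\alpha,\ve$ for all four maps, taking a tubular--neighbourhood submersion for $\id_M$ so that its $T(\,\cdot\,)$ equals the identity. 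Since the noncommutative $\eta$-form is additive under disjoint unions of closed manifolds — the rescaled superconnection and the trace $\Tr_\sigma$ being block diagonal — regrouping the four summands of $\Hi_W$ in the two available ways gives
\begin{align*}
\eta(f\circ g)+\eta(\id_M)&=\eta\bigl(\D_W,\ A(f\circ g)\oplus A(\id_M)\bigr) \ ,\\
\eta(f)+\eta(g)&=\eta\bigl(\D_W,\ A(f)\oplus A(g)\bigr) \ ,
\end{align*}
with $A(f\circ g)$ supported on the $N$- and $L^{op}$-summands and $A(\id_M)$ on the two $M$-summands in the first line, and $A(f)$ supported on the $N$- and first $M$-summand and $A(g)$ on the second $M$- and the $L^{op}$-summand in the second line. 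Thus it suffices to join $A_0:=A(f)\oplus A(g)$ to $A_1:=A(f\circ g)\oplus A(\id_M)$ by a strongly continuous path $(A_x)_{x\in[0,1]}$ of bounded self-adjoint operators on $\Hi_W$ with $D_x:=\D_W+A_x$ invertible for every $x$ and $D_x\colon\Hi_W^1\to\Hi_W$ norm-continuous in $x$.

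Granting such a path, $(D_x)_{x\in[0,1]}$ is an invertible regular self-adjoint operator on the Hilbert $C([0,1],\A)$-module $C([0,1],\Hi_W)$, so its spectral flow vanishes and $\ind(\ra_x-D_x)=0$ in $K_*(\A)$. Applying the Atiyah--Patodi--Singer index theorem for Dirac operators over $C^*$-algebras to $\ra_x-D_x$ on $[0,1]\times W$ gives
$$0=\ch\bigl(\ind(\ra_x-D_x)\bigr)=(2\pi\iu)^{-\frac{n+1}{2}}\int_{[0,1]\times W}L([0,1]\times W)\,\ch(\F_W)+\eta(\D_W,A_0)-\eta(\D_W,A_1) \ ,$$
and the integral vanishes because neither the metric nor the flat connection varies over the $[0,1]$-factor, so $L([0,1]\times W)\ch(\F_W)$ has no $dx$-component. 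Hence $\eta(\D_W,A_0)=\eta(\D_W,A_1)$, which is the assertion.

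The construction of the path is the heart of the matter, and here I would use the composition identity of the Hilsum--Skandalis formalism established in the proof of Lemma \ref{iso}: if $p$, $q$ are submersions associated to $f$, $g$, then $p\circ(\id\times q)$ is a submersion associated to $f\circ g$ and $T_{w\wedge v}\bigl(p\circ(\id\times q)\bigr)=T_w(q)\,T_v(p)$. Grouping the four summands of $\Hi_W$ as a pair of ``source'' summands ($\Omega^*(N,\F_N)$ and one copy of $\Omega^*(M,\F_M)$) and a pair of ``target'' summands (the other copy of $\Omega^*(M,\F_M)$ and $\Omega^*(L,\F_L)$), the Hilsum--Skandalis data underlying $A_0$ is the block-diagonal pair $(T_v(p),T_w(q))$, whereas the data underlying $A_1$ is, after the permutation interchanging the two target summands, the block-diagonal pair $(T_w(q)T_v(p),\id)$. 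One then interpolates between $\operatorname{diag}(T_w(q)T_v(p),\id)$ and $\operatorname{diag}(T_v(p),T_w(q))$ by the standard $GL_2$ Whitehead rotation — a path of invertible block $2\times 2$ transformations conjugating the combined Hilsum--Skandalis data — noting that at every stage of the rotation the data is that of a chain homotopy equivalence, so the associated operators $\hat\D_{\alpha,1}$, hence $\D_W+A_x$, stay invertible by the Hilsum--Skandalis invertibility statement (the proof of \cite[Prop.~1.9]{hs}) used in \S\ref{signop}. The spectral cutoffs $\phi_\ve$ do not compose on the nose (the composite $\T_w(q)\T_v(p)$ carries a spurious middle factor $\phi_\ve(\D_M)^2$ compared with the cutoff of $T_w(q)T_v(p)$), but this is remedied exactly as in \S\ref{prodform} by inserting the interpolation operators $\T^{ip}$, $\Y^{ip}$; composing the Whitehead rotation with these interpolations and with the paths already available from the first two sections joining different choices of $p$, $v$, $\Y$, $\phi_\ve$, $\alpha$, $\ve$ produces the desired path from $A_0$ to $A_1$.

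The main obstacle will be this last step: one has to carry the algebra of \S\ref{signop} (the operators $\delta_\alpha$, $\Li_{\alpha,\beta}$, $\Si_{\alpha,\beta}$, $\U_{\alpha,\beta}$, $\tilde\D_{\alpha,\beta}$) through for the combined four-summand data along the whole rotation, check that the perturbations stay bounded, self-adjoint and adjointable with the required continuity on $\Hi_W$ and $\Hi_W^1$, and verify invertibility of $\D_W+A_x$ uniformly in $x$ — a bookkeeping task parallel to, but somewhat heavier than, the interpolation arguments of \S\ref{prodform}.
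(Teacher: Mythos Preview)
Your approach is essentially the paper's: the same four-summand Hilbert module, an explicit unitary rotation $U(t)$ on the two middle $M$-summands interpolating between the identity and the swap, and $T^\oplus(t)=(1\oplus T_w(q))\,U(t)\,(T_v(p)\oplus 1)$, so that $T^\oplus(0)=T_v(p)\oplus T_w(q)$ while $T^\oplus(1)$ is (after the swap) the pair $(T_{w\wedge v}(r),1)$ with $r=p\circ(\id\times q)$. The paper then writes down $Y^\oplus(t)$ explicitly in terms of $Y(p)$, $Y(q)$ and $U(t)$, runs the \S\ref{signop} machinery for every $t$, and invokes the \S\ref{rhodef} argument to get $t$-independence of the $\eta$-form.

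Where the paper is lighter than you expect is the spectral cutoff. Rather than cut off $T_v(p)$ and $T_w(q)$ separately and then repair the mismatch $\T_w(q)\T_v(p)\neq\T_{w\wedge v}(r)$, the paper cuts off the combined operator once:
\[
\T^\oplus(t)_\ve:=\phi_\ve(\D_M\oplus\D_L)\,T^\oplus(t)\,\phi_\ve(\D_N\oplus\D_M)\ .
\]
At $t=0$ this is literally $\T_v(p)\oplus\T_w(q)$; at $t=1$ it is \emph{some} admissible cutoff of data for the pair $(f\circ g,\id_M)$, and since $\eta(\cdot)$ has already been shown to be independent of all such choices, the endpoint $\eta$-values are $\eta(f)+\eta(g)$ and $\eta(f\circ g)+\eta(\id_M)$ on the nose. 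No \S\ref{prodform} interpolation is needed, and the ``heavy bookkeeping'' you anticipate collapses to a short explicit formula for $Y^\oplus(t)$. (A small terminological point: the rotation is sandwiched between the two factors, not a conjugation of the data.)
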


\begin{proof}
Let $p:I^k \times M \to N$, $q:I^l \times L \to M$ be submersions and let $v \in \Omega^k_c(I^k)$, $w\in \Omega^l_c(I^l)$ as in \S \ref{setting}. We set $\tilde \Hi_1:=\Omega^*_{(2)}(N,\F_N)\oplus \Omega^*_{(2)}(M,\F_M)$, $\tilde\Hi_2:= \Omega^*_{(2)}(M^{op},\F_M) \oplus \Omega^*_{(2)}(L^{op},\F_L)$ and $\tilde\Hi:=\tilde\Hi_1 \oplus \tilde\Hi_2$. On $\tilde\Hi$ we have the signature operator 
 $$\D=\D_N\oplus \D_M \oplus (-\D_M) \oplus (-\D_L) \ .$$ 

From \S \ref{setting} we get operators $T_v(p) \oplus 1:\tilde \Hi_1 \to \Omega^*_{(2)}(M^{op},\F_M) \oplus \Omega^*_{(2)}(M,\F_M)$ and $1 \oplus T_w(q):\Omega^*_{(2)}(M^{op},\F_M) \oplus \Omega^*_{(2)}(M,\F_M) \to \tilde \Hi_2$. Define the path of unitaries $$U(t)=\left(\begin{array}{cc} e^{i\pi t}\cos(\frac \pi 2 t) & \sin(\frac \pi 2 t) \\ -e^{i\pi t}\sin(\frac \pi 2 t) & \cos(\frac \pi 2 t)\end{array}\right), t \in [0,1]$$ on $\Omega^*_{(2)}(M^{op},\F_M) \oplus \Omega^*_{(2)}(M,\F_M)$. Then $U(0)=1$, and $U(1)$ interchanges the two summands.

Set
$$T^{\oplus}(t)=(1 \oplus T_w(q))U(t)(T_v(p) \oplus 1):\tilde\Hi_1 \to \tilde\Hi_2 \ .$$
Note that 
$$T^{\oplus}(0)=T_v(p)\oplus T_w(q)$$ 
and 
$$T^{\oplus}(1)=\left(\begin{array}{cc} 0 & 1\\ T_w(q)T_v(p) & 0 \end{array}\right) \ .$$

Define $r=p \circ(\id \times q)):I^{k+l}\times L \to N$. Then, by the calculation in the proof of Lemma \ref{iso},
$$T_w(q)T_v(p)=T_{w\wedge v}(r) \ .$$

We choose $Y(p)$ and $Y(q)$ as in Lemma \S \ref{lemY} for $T_v(p)$ and $T_w(q)$, respectively.
 
Using that 
$$U(t)^{\aj}=\left(\begin{array}{cc} -\tau_M & 0\\ 0 & \tau_M \end{array}\right)U(t)^*\left(\begin{array}{cc} -\tau_M & 0\\ 0 & \tau_M \end{array}\right)=JU(t)^*J$$
with $$J=\left(\begin{array}{cc} -1 & 0\\ 0 & 1 \end{array}\right) \ ,$$
one gets 
\begin{align*}
\lefteqn{1+(T^{\oplus}(t))^{\aj}T^{\oplus}(t)}\\
&=1+(T_v(p)^{\aj} \oplus 1)JU(t)^*J(1 \oplus (d_M Y(q)+Y(q)d_M-1))U(t)(T_v(p) \oplus 1)\\
&=1+(d_N \oplus d_M)(T_v(p)^{\aj} \oplus 1)JU(t)^*J(0 \oplus Y(q))U(t)(T_v(p) \oplus 1)\\
&\quad +(T_v(p)^{\aj} \oplus 1)JU(t)^*J(0 \oplus Y(q))U(t)(T_v(p) \oplus 1)(d_N \oplus d_M)\\
&\quad +(T_v(p)^{\aj}T_v(p) \oplus (-1))\\
&=(d_N \oplus d_M)Y^{\oplus}(t) + Y^{\oplus}(t) (d_N \oplus d_M)
\end{align*}
with $$Y^{\oplus}(t)=(T_v(p)^{\aj} \oplus 1)U(t)^{\aj}(0 \oplus Y(q))U(t)(T_v(p) \oplus 1)+(Y(p)\oplus 0) \ .$$
Thus we have defined $Y^{\oplus}(t)$ such that $T^{\oplus}(t)$, $Y^{\oplus}(t)$ fulfill an analogue of Lemma \ref{lemY}. 

For $\ve\in (0,\infty)$ we set, with $\phi_{\ve}$ as before,
$$\T^{\oplus}(t)_{\ve}:=\phi_{\ve}(\D_M \oplus \D_L) T^{\oplus}(t) \phi_{\ve}(\D_N \oplus \D_M) \ .$$

We get an analogue of Lemma \ref{lemYeps} for the operator $\T^{\oplus}(t)_{\ve}$. Then, as in \S \ref{signop} one obtains selfadjoint bounded operators $A^{\oplus}(t)_{\alpha,\beta,\ve}$ on $\tilde \Hi$ such that $\D+A^{\oplus}(t)_{\alpha,1,\ve}$ is invertible for all $t \in [0,1]$ for $|\alpha|\neq 0$ small enough. By the definition in \S \ref{rhodef}
$$\frac 12\bigl(\eta(\D, A^{\oplus}(0)_{\alpha,1,\ve})+ \eta(\D, A^{\oplus}(0)_{-\alpha,1,\ve})\bigr)=\eta(f)+\eta(g)$$
and
$$\frac 12\bigl(\eta(\D, A^{\oplus}(1)_{\alpha,1,\ve})+ \eta(\D, A^{\oplus}(1)_{-\alpha,1,\ve})\bigr)=\eta(f\circ g)+\eta(\id_M) \ .$$
Furthermore, $\eta(\D, A^{\oplus}(t)_{\pm \alpha,1,\ve})$ does not depend on the parameter $t$. This shows the assertion.
\end{proof}

\section{$\rho$-forms and the structure set}
\label{structure}

In this section we show that the form $\rho(f)$ from \S \ref{rhodef} is well-defined on the surgery structure set. The main conceptional input in the proof is a generalization of the framework of \cite{hs} to manifolds with cylindrical ends.

Let $M$, $N$ be even-dimensional oriented Riemannian manifolds with cylindrical ends $Z_M^+$ and $Z_N^+$, respectively, and let $\tilde N \to N$ be a Galois covering with deck transformation group $\Gamma$. We denote the cross-sections of the cylindrical ends by $\ra M$ and $\ra N$ and fix an isometry of the cylindrical ends with $\bbbr^+ \times \ra M$ and $\bbbr^+ \times \ra N$, respectively. 

We define the cylinders $Z_M=\bbbr \times \ra M$ and $Z_N=\bbbr \times \ra N$ and denote the first coordinate by $x$.

Let $f:M \to N$ be a smooth map of degree one which is of the product form $\id_{\bbbr} \times f_{\ra}$ on the cylindrical end. (When carrying over the constructions from \S \ref{setting} this assumption is made tacitly whenever it applies.) 

We write $Z=Z_N \cup Z_M^{op}$ and $Z^+=Z_N^+ \cup (Z_M^+)^{op}$. 

The flat bundles $\F_N$ , $\F_M$, $\F$ are defined as in \S \ref{setting}. On the cylindrical ends they are pullbacks of bundles $\F_{\ra N}$, $\F_{\ra M}$, $\F_{\ra}$, respectively. We define $\F_Z$ as the pullpack of $\F_{\ra}$ to $Z$.

We carry over the constructions in \S \ref{setting} to the manifolds with cylindric ends $M$ and $N$. Thus we get $Q$, $\D$, $\tau$ and spaces $\Hi$, $\Hi^1$. If we apply the constructions to the boundary $\ra M$, $\ra N$, we add an index $\ra$. We have $Q_{\ra}$, $\D_{\ra}$, $\tau_{\ra}$ and the spaces $\Hi_{\ra}$, $\Hi_{\ra}^1$.
(We assume $B$ to be a point.)

We may choose a submersion $p:I^k\times M \to N$ as in \S \ref{setting} with the additional condition that $p|_{I^k \times Z_M^+}$ is of the product form $\id_{\bbbr} \times p_{\ra}$ for a submersion $p_{\ra}:I^k \times \ra M \to \ra N$. 

Then the operator $T_v(p)$ is defined as in \S \ref{setting}. We may choose the operator $Y$ from Lemma \ref{lemY} such that its restriction to the cylindrical end is of the form $\gamma_1 \ten Y^{\ra}$.

From \S \ref{signop} we get operators $\tilde \tau_{\alpha,\beta,\ve}^{\ra}, \tilde \delta_{\alpha,\beta,\ve}^{\ra}$, such that 
$$\tilde \D_{\alpha,\beta,\ve}^{\ra}=(-\iu)(\tilde \tau_{\alpha,\beta,\ve}^{\ra}\tilde \delta_{\alpha,\beta,\ve}^{\ra}+\tilde \delta_{\alpha,\beta,\ve}^{\ra}\tilde \tau_{\alpha,\beta,\ve}^{\ra})$$ 
is  an $\ve$-spectrally concentrated perturbation of the signature operator $\D_{\ra}$ on  $\Hi_{\ra}$ for $\ve \in (0,\infty)$ and $(\alpha,\beta) \in \ins$.
Here again we assume $\alpha_0>0$ small enough such that the following constructions work.

Now we consider the construction of \S \ref{signop} for the manifolds $M$, $N$.
If not specified we assume $\ve=\infty$ and omit it from the notation.

As in \S \ref{signop}, with $\T_v(p):=T_v(p)$ and $\Y:=Y$, we define the operators $\Li_{\alpha,\beta}$, $\delta_{\alpha}$, $\tilde \tau_{\alpha,\beta}$, $\tilde \delta_{\alpha,\beta}$ and
\begin{align}
\label{hsop}
\tilde \D_{\alpha,\beta}=\tilde \delta_{\alpha,\beta}-\tilde \tau_{\alpha,\beta} \tilde \delta_{\alpha,\beta}\tilde \tau_{\alpha,\beta} \ .
\end{align}
Recall that this operator anticommutes with $\tilde \tau_{\alpha,\beta}$. 

In the following, for simplicity we omit the suffixes $\alpha,\beta$.

On the cylindrical end it holds that $\tau=\iu\gamma_1\tau_1\tau_{\ra}$ with $\tau_1=\iu c(dx)$.

Using the definitions from before Lemma \ref{changeinvol}, we can decompose $\tilde \tau^{\ra}=t^{o}+ \gamma_{\ra}t^{e}$. 
Then by an analogue of Lemma \ref{changeinvol} on the cylindrical end 
\begin{align}
\label{eqtau}
\tilde \tau&=-\iu \tau_1(\gamma_1 t^{o}+\gamma t^e) \ .
\end{align} 
(Here we follow the same convention as in \S \ref{prodform} concerning tensor products; thus for example $t^o=1\ten t^o$ on $\Omega_{(2)}^*(\bbbr)\ten \Hi_{\ra}$ in an ungraded sense. This accounts for the extra factor $\gamma_1$ in the formulas here compared with those in Lemma \ref{changeinvol}.)

We  also decompose $\tilde \delta^{\ra}=d^{o}+ \gamma_{\ra} d^{e}$ and define $\tilde \delta^{cs}:=\tilde \delta-dx\, \ra_x$. Here the suffix $cs$ stands for ``cross section''. 
Then
\begin{align}
\label{eqcs}
\tilde \delta^{cs}=(-\iu)(\gamma_1 d^o + \gamma d^e) \ .
\end{align}

In particular, on the cylindrical end $\tilde \tau$ and $\tilde \delta$ are invariant under translation and extend to operators on $\Omega^*_{(2)}(Z,\F_Z)$, which we denote by $\tilde \tau^Z$ and $\tilde \delta^Z$, respectively. We define 
$$\tilde \D^Z=\tilde \delta^Z-\tilde \tau^Z \tilde \delta^Z \tilde \tau^Z \ .$$

For $s\in \Omega_c^*(Z^+,\F)$ it follows that $\tilde \D s\in \Omega_c^*(Z^+,\F)$  and $\tilde \D s=\tilde \D^Zs$.

Note that $\tilde \tau_Z$ and $\tilde \delta^{cs}$ are also well-defined on $\C(\ra N \cup \ra M , \Lambda^* T^*Z \ten\F_Z)$ because no $x$ differentiation is involved.

We go on to study the situation on the cylinder in more detail. 

Eq. (\ref{eqtau}) implies that $c(dx)\tilde \tau^Z=-\tilde \tau^Z c(dx)$ and that for $\omega \in \Omega^*(\ra N \cup \ra M, \F_{\ra})$
$$\tilde \tau^Z (dx \wedge \omega)=\iu\tau_1 dx \wedge \tilde \tau^{\ra}\omega=\tilde \tau^{\ra}\omega \ ,$$
 $$\tilde \tau^Z (\omega)=-\iu\tau_1\gamma_1 \tilde \tau^{\ra}(\omega)=dx \wedge \tilde \tau^{\ra}\omega \ .$$

It holds that
$$\tilde \D^Z=c(dx)\left(\ra_x - \tilde \tau^Z \iu c(dx)(\tilde \tau^Z \iu\tilde\delta^{cs}- \iu\delta^{cs}\tilde \tau^Z)\right) \ .$$

Let $$\tilde \Phi: \Omega^*(\ra N \cup \ra M, \F_{\ra}) \to \C(\ra N \cup \ra M , \Lambda^* T^*Z \ten\F_Z) \ ,$$
$$\omega \mapsto \frac{1}{\sqrt 2}\bigl(dx \wedge \omega + \tilde \tau^Z(dx \wedge \omega)\bigr)=\frac{1}{\sqrt 2}(dx \wedge \omega +\tilde \tau^{\ra}\omega) \ .$$

The operator $\tilde \Phi$ is an isomorphism onto the positive eigenspace of $\tilde \tau^Z$.

The operator $c(dx)\tilde \D^Z+\ra_x$ is even with respect to $\tilde \tau^Z$ and well-defined on $\C(\ra N \cup \ra M , \Lambda^* T^*Z \ten\F_Z)$. By definition, the boundary operator of $\tilde \D$ is the restriction of $c(dx)\tilde \D^Z+\ra_x$ to the positive eigenspace of $\tilde \tau^Z$ on $\C(\ra N \cup \ra M , \Lambda^* T^*Z \ten\F_Z)$. It equals $\iu c(dx)(\tilde \tau^Z \iu\tilde\delta^{cs}- \iu\tilde \delta^{cs}\tilde \tau^Z)$.

We evaluate the boundary operator further:
Using that for $\omega \in \Omega^*(\ra N \cup \ra M, \F_{\ra})$, by eq. (\ref{eqcs}),
$$\iu\tilde \delta^{cs}(dx \wedge \omega)=- dx \wedge \tilde \delta^{\ra}\omega$$
$$\iu\tilde \delta^{cs}\omega=\tilde \delta^{\ra}\omega \ ,$$
we get that
\begin{align*}
\lefteqn{\tilde \Phi^{-1}\iu c(dx)(\tilde \tau^Z \iu \tilde \delta^{cs} - \iu\tilde \delta^{cs} \tilde \tau^Z)\tilde \Phi(\omega)}\\
&=\frac{1}{\sqrt 2}\tilde \Phi^{-1}\iu c(dx)(\tilde \tau^Z\iu \tilde \delta^{cs} - \iu \tilde \delta^{cs}\tilde \tau^Z)\bigl(dx \wedge \omega + \tilde \tau^Z(dx \wedge \omega))\\
&=\frac{1}{\sqrt 2}\tilde \Phi^{-1}\iu c(dx)(-\tilde \tau^Z dx \wedge \tilde \delta^{\ra} \omega  + \tilde \tau^Z\tilde \delta^{\ra}\tilde \tau^{\ra}\omega  - \tilde \delta^{\ra}\tilde \tau^{\ra}\omega + dx \wedge \tilde \delta^{\ra} \omega )\\
&=\frac{1}{\sqrt 2}\tilde \Phi^{-1}\iu c(dx)(-\tilde \tau^{\ra}\tilde \delta^{\ra} \omega  + dx \wedge \tilde \tau^{\ra}\tilde \delta^{\ra}\tilde \tau^{\ra}\omega  - \tilde \delta^{\ra}\tilde \tau^{\ra}\omega + dx \wedge \tilde \delta^{\ra} \omega )\\
&=\frac{1}{\sqrt 2}\tilde \Phi^{-1}\iu (-dx \wedge \tilde \tau^{\ra}\tilde \delta^{\ra} \omega - \tilde \tau^{\ra}\tilde \delta^{\ra}\tilde \tau^{\ra}\omega  - dx \wedge \tilde \delta^{\ra}\tilde \tau^{\ra}\omega - \tilde \delta^{\ra} \omega)\\
&=\iu(-\tilde \tau^{\ra}\tilde \delta^{\ra} - \tilde \delta^{\ra}\tilde \tau^{\ra})\omega=\tilde \D^{\ra}\omega \ .
\end{align*}  

We conclude that $$(\tilde \D^Z)^+=c(dx)\left(\ra_x-\tilde \Phi \tilde \D^{\ra} \tilde \Phi^{-1}\right) \ .$$
Here, as usual, $(\tilde \D^Z)^+:\frac 12(1+\tilde \tau^Z)\Omega^*_{(2)}(Z,\F_Z) \to \frac 12(1-\tilde \tau^Z)\Omega^*_{(2)}(Z,\F_Z)$ is the restriction of $\tilde \D^Z$ to the positive eigenspace of $\tilde \tau^Z$. Its adjoint is denoted by  $(\tilde \D^Z)^-$.

Thus on the cylindrical end  $$\tilde \D^+=c(dx)\left(\ra_x-\tilde \Phi \tilde \D^{\ra} \tilde \Phi^{-1}\right) \ .$$
 
If $f_{\ra}$ is a homotopy equivalence, then the operator $\tilde \D^{\ra}_{\alpha_0,1}$ is invertible. By standard arguments this implies that $\tilde \D_{\alpha_0,1}^+$ is a Fredholm operator. This shows the first part of the following proposition:

\begin{prop}
\label{invert}
If $f_{\ra}$ is a homotopy equivalence, then $\tilde \D_{\alpha_0,1}^+$ is a Fredholm operator. 

If $f$ is a homotopy equivalence, then $\tilde \D_{\alpha_0,1}^+$ is invertible.
\end{prop}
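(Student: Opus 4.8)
The first assertion is exactly the one established in the paragraph preceding the statement, so the plan is to prove the second one: assuming $f$ a homotopy equivalence, that $\tilde\D_{\alpha_0,1}^+$ is invertible. First I would reduce invertibility to the vanishing of a kernel. Since $\tilde\D_{\alpha_0,1}^+$ is already known to be Fredholm it has closed range, and since $\tilde\D_{\alpha_0,1}$ is selfadjoint and anticommutes with the involution $\tilde\tau_{\alpha_0,1}$, its kernel splits as $\Ker\tilde\D_{\alpha_0,1}^+\oplus\Ker\tilde\D_{\alpha_0,1}^-$ with $\Ker\tilde\D_{\alpha_0,1}^-\cong\Coker\tilde\D_{\alpha_0,1}^+$; hence it suffices to show $\Ker\tilde\D_{\alpha_0,1}=0$. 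Conjugating by the bounded, invertible, domain-preserving operator $\U_{\alpha_0,1}$ — which is translation invariant on the cylindrical end, being built from the product-type operators $T_v(p)$ and $Y$ — this amounts to $\Ker\hat\D_{\alpha_0,1}=0$, where $\hat\D_{\alpha_0,1}=\delta_{\alpha_0}-\Si_{\alpha_0,1}\delta_{\alpha_0}\Si_{\alpha_0,1}$.

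Next I would set $\delta^\flat:=-\Si_{\alpha_0,1}\delta_{\alpha_0}\Si_{\alpha_0,1}$, so that $(\delta^\flat)^2=0=\delta_{\alpha_0}^2$ and $\hat\D_{\alpha_0,1}=\delta_{\alpha_0}+\delta^\flat$. Using that $\delta_{\alpha_0}$ is skew-hermitian for $C_{\alpha_0,1}$ and that $\Si_{\alpha_0,1}$ is compatible with $C_{\alpha_0,1}$, a short computation identifies $\delta^\flat$ with the adjoint of $\delta_{\alpha_0}$ for the scalar product $\langle\cdot,\cdot\rangle_{\alpha_0,1}$ (an inner product equivalent to the standard one); hence $\hat\D_{\alpha_0,1}^2=\delta_{\alpha_0}\delta^\flat+\delta^\flat\delta_{\alpha_0}$ is a Hodge Laplacian and $\im\delta_{\alpha_0}\perp\im\delta^\flat$ for $\langle\cdot,\cdot\rangle_{\alpha_0,1}$. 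Given $\psi\in\Ker\hat\D_{\alpha_0,1}$, invertibility of the boundary operator $\tilde\D_{\alpha_0,1}^{\ra}$ together with the identity $\tilde\D^+=c(dx)(\ra_x-\tilde\Phi\tilde\D^{\ra}\tilde\Phi^{-1})$ on $Z^+$ and functional calculus for the regular selfadjoint operator $\tilde\D^{\ra}$ force (the $\U_{\alpha_0,1}$-transform of) $\psi$ to decay exponentially along the cylindrical end, so $\psi\in\dom\hat\D_{\alpha_0,1}=\Hi^1\subseteq\dom\delta_{\alpha_0}\cap\dom\delta^\flat$ with no contribution from infinity. Then $0=\hat\D_{\alpha_0,1}\psi=\delta_{\alpha_0}\psi+\delta^\flat\psi$, and the orthogonality of the two summands yields $\delta_{\alpha_0}\psi=0$ and $\delta^\flat\psi=0$.

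Finally I would run the Hilsum--Skandalis argument behind \cite[Prop.~1.9]{hs} on the manifolds with cylindrical ends. As explained at the start of this section, the operators $T_v(p)$, $Y$ and the primitives $s_p$ of \S\ref{setting} carry over here as operators that are bounded on $\Omega^*_{(2)}$ and on $\Omega^*_{H^1}$, preserve $\dom d$, and have product form on the end; by the cylindrical-end analogue of Lemma~\ref{iso}, $T_v(p)$ is a quasi-isomorphism because $f$ is a homotopy equivalence, and assembling $Y$, a homotopy inverse, and the homotopies between the underlying submersions exactly as in \cite{hs} produces a bounded operator $\Theta$ on $\Hi$ with $\Theta(\dom\delta_{\alpha_0})\subseteq\dom\delta_{\alpha_0}$ and $\delta_{\alpha_0}\Theta+\Theta\delta_{\alpha_0}=1$. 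Then $\delta_{\alpha_0}\psi=0$ gives $\psi=\delta_{\alpha_0}(\Theta\psi)$ with $\Theta\psi\in\dom\delta_{\alpha_0}$, so
\[
\langle\psi,\psi\rangle_{\alpha_0,1}=\langle\delta_{\alpha_0}\Theta\psi,\psi\rangle_{\alpha_0,1}=\langle\Theta\psi,\delta^\flat\psi\rangle_{\alpha_0,1}=0,
\]
hence $\psi=0$; transporting back, $\Ker\tilde\D_{\alpha_0,1}=0$, which together with the closed range of the Fredholm operator $\tilde\D_{\alpha_0,1}^+$ gives its invertibility. The hard part will be this last step: re-establishing the identities of \S\ref{setting} — boundedness and product-compatibility of $T_v(p)$, $Y$ and $s_p$ — uniformly on the cylindrical end, and checking that the resulting contracting homotopy $\Theta$ of $\delta_{\alpha_0}$ is bounded and domain-preserving; a secondary technical point is carrying out the exponential-decay and elliptic-regularity arguments of the second paragraph in the Hilbert $C^*$-module framework, with functional calculus for $\tilde\D^{\ra}$ in place of the usual spectral decomposition.
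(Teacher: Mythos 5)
Your plan diverges from the paper's in a way that introduces a genuine gap at the very first step.

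You reduce invertibility to \emph{(a)} the Fredholm property and \emph{(b)} vanishing of the kernel of $\tilde\D_{\alpha_0,1}$, asserting that a Fredholm operator on a Hilbert $C^*$-module has closed range and that a Fredholm operator with trivial kernel and cokernel is invertible. Neither implication holds in the Hilbert $\A$-module setting. ``Fredholm'' here means invertible modulo compact module maps; this does \emph{not} force the range to be closed nor the kernel to be complemented, and an injective selfadjoint regular Fredholm operator need not have a bounded inverse (its module spectrum may still accumulate at $0$). Precisely for this reason the paper does not argue via triviality of the kernel at all. Instead it proves the small lemma placed just before the proposition: if $D$ is regular with $D^2=0$ and $\Ker D = \im D$, then $\im D$ is a closed \emph{complemented} submodule with complement $\Ker D^*$, and with respect to $H = \Ker D \oplus \Ker D^*$ the operator $D+D^*$ is block off-diagonal with invertible blocks, hence invertible. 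Acyclicity of $\delta_{\alpha_0}$ thus gives invertibility of $\tilde\D_{\alpha_0,1}=\tilde\delta_{\alpha_0,1}+\tilde\delta_{\alpha_0,1}^*$ directly, bypassing the question of whether Fredholmness localizes the spectrum away from $0$. The Fredholm statement (first part) is not used as input for the second part.

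The later stages of your argument are in fact the right ingredients, only packaged suboptimally. What your contracting homotopy $\Theta$ with $\delta_{\alpha_0}\Theta + \Theta\delta_{\alpha_0} = 1$ really establishes is exactly $\Ker\delta_{\alpha_0}=\im\delta_{\alpha_0}$, which is what the paper deduces by observing that the proof of Lemma~\ref{iso} carries over to cylindrical ends (both are the Hilsum--Skandalis algebraic lemma for a degree-one homotopy equivalence). If you feed that acyclicity into the paper's module lemma you are done, with no need for the Hodge decomposition of $\Ker\hat\D_{\alpha_0,1}$, no orthogonality computation, and no regularity or decay analysis. As a secondary remark, the exponential-decay step in your second paragraph is not what is actually needed even for the Hodge route: the issue there is that to compute $\langle\psi,\hat\D^2\psi\rangle_{\alpha_0,1}$ you need $\delta_{\alpha_0}\psi$ and $\delta^\flat\psi$ to lie again in the relevant domains, which is a matter of elliptic regularity for the first-order operator $\hat\D_{\alpha_0,1}$ (the perturbation being smoothing), not of pointwise decay; and once everything is in $\Hi^1$ and its higher analogues, the $L^2$ integration by parts on a manifold with cylindrical ends produces no boundary terms without any decay hypothesis. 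But the cleaner fix is to drop the ``trivial kernel plus Fredholm'' framing entirely and invoke the module lemma as the paper does.
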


For the proof of the second assertion we need the following technical lemma, which replaces \cite[Prop. 1.9]{hs}:

\begin{lem}
Assume that $D$ is a regular operator on a Hilbert module $H$ with $D^2=0$. If $\Ker D=\im D$, then the operator $D+D^*$ is invertible. In particular, it is selfadjoint and regular. 
\end{lem}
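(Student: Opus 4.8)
The plan is to run an abstract Hodge-decomposition argument. Write $T = D + D^*$ on its natural domain $\dom D \cap \dom D^*$; there $T$ is visibly symmetric, so it suffices to exhibit a bounded two-sided inverse $G$ of $T$. Once $T$ is bijective onto $H$ with bounded inverse it is surjective, hence selfadjoint (a symmetric surjective operator is selfadjoint), and $G = T^{-1}$ satisfies $G^* = G$ by the symmetry of $T$, so $G$ is bounded and adjointable; an operator whose inverse is bounded and adjointable is regular, which yields the last sentence of the lemma.

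First I would extract the structure. Since $D^2 = 0$ we have $\im D \subseteq \Ker D$, and as $\Ker D$ is automatically closed the hypothesis $\Ker D = \im D$ forces $\Ran D = \im D$ to be closed. By the standard theory of regular operators with closed range on Hilbert $C^*$-modules, $\Ran D$ is then orthogonally complemented with complement $\Ker D^*$, and $\Ran D^*$ is closed and complemented with complement $\Ker D$. Feeding $\Ker D = \Ran D$ into this gives $\Ker D^* = (\Ran D)^\perp = (\Ker D)^\perp = \overline{\Ran D^*} = \Ran D^*$, so $\Ker D^* = \Ran D^*$ as well, and $H = \Ran D \oplus \Ran D^*$ is an orthogonal direct sum with both summands inside $\dom D \cap \dom D^*$ (because $\Ran D = \Ker D \subseteq \dom D$ and $\Ran D^* = \Ker D^* \subseteq \dom D^*$). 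Injectivity of $T$ is then immediate: $Tx = 0$ gives $Dx = -D^*x$ with $Dx \in \Ran D$ and $D^*x \in \Ran D^* = (\Ran D)^\perp$, hence $Dx = D^*x = 0$ and $x \in \Ker D \cap \Ker D^* = \Ran D \cap \Ran D^* = \{0\}$.

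To build $G$, let $P$ be the orthogonal projection onto $\Ran D = \Ker D$, so $1-P$ projects onto $\Ran D^* = \Ker D^*$; being the projections onto $\Ker D$ and $\Ker D^*$, these preserve $\dom D$ and $\dom D^*$ respectively, and hence both preserve $\dom D \cap \dom D^*$. The restriction $D\colon \dom D \cap \Ran D^* \to \Ran D$ is injective (its source meets $\Ker D$ only in $0$) and surjective ($\Ran D$ is closed and $\dom D = \Ker D \oplus (\dom D \cap \Ran D^*)$), so by the open mapping theorem, applied to $D$ regarded as a bounded surjection from $\dom D \cap \Ran D^*$ with the graph norm onto $\Ran D$, it has a bounded inverse; composing with $P$ gives a bounded operator $G_1\colon H \to \dom D \cap \dom D^*$ with $D G_1 = P$, $\im G_1 \subseteq \Ran D^* = \Ker D^*$, and $G_1$ vanishing on $\Ran D^*$. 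Symmetrically one gets $G_2$ with $D^* G_2 = 1-P$, $\im G_2 \subseteq \Ran D = \Ker D$, and $G_2$ vanishing on $\Ran D$. With $G = G_1 + G_2$, using $D^* G_1 = 0$ and $D G_2 = 0$, one checks $TG = D G_1 + D^* G_2 = P + (1-P) = \id$, and $GT = \id$ on $\dom D \cap \dom D^*$ by writing $x = Px + (1-P)x$, noting $Dx = D(1-P)x$ and $D^*x = D^* Px$, and applying $G_1$, $G_2$. The one genuine obstacle is the Hilbert-module bookkeeping in these two paragraphs — that $\Ran D$, and hence $\Ran D^*$, is orthogonally complemented (true for regular operators with closed range, false for general closed submodules) and that the associated projections preserve the relevant domains, together with the application of the open mapping theorem to honest Banach spaces. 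Everything else, including the passage to selfadjointness and regularity, is formal.
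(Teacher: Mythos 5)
Your argument is essentially the paper's: both decompose $H=\Ker D\oplus\Ker D^*$ using that $\Ker D=\im D$ is closed (hence orthogonally complemented, by the regular-operator structure), observe that $D+D^*$ is block off-diagonal for this decomposition, and invert the off-diagonal entries; you merely spell out the open-mapping-theorem step and the explicit Green's operator that the paper leaves implicit. One parenthetical overstatement worth flagging, though it is not used later: $\Ker D=\Ran D$ need not be contained in $\dom D^*$ (nor $\Ker D^*$ in $\dom D$) — the summands of the decomposition do not lie in $\dom D\cap\dom D^*$ in general, and the projections preserve $\dom D\cap\dom D^*$ only via the separate observations that $P$ and $1-P$ each preserve both $\dom D$ and $\dom D^*$, not by the reason you give.
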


\begin{proof}
Since $\im D(1+D^*D)^{-1/2}=\im D$ is closed and $D(1+D^*D)^{-1/2} \in B(H)$, the module $\im D$ is complemented. Its complement is $\Ker D^*$.

With respect to the decomposition $H=\Ker D \oplus \Ker D^*$ it holds that

$$D+D^*=\left(\begin{array}{cc} 0 & D \\ D^* & 0 \end{array}\right)\ .$$
Here $D:\Ker D^* \to \Ker D$ and $D^*:\Ker D \to \Ker D^*$ are invertible. It follows that $D+D^*$ is invertible. 
\end{proof}

\begin{proof}[Proof of the Proposition.]
The argument is as in the proof of \cite[2.1 Lemme]{hs}: 

One checks that the proof of Lemma \ref{iso} still goes through for manifolds with cylindrical ends. It implies that $\Ker \delta_{\alpha_0}=\im \delta_{\alpha_0}$, and hence also $\Ker \tilde \delta_{\alpha_0,1}=\im \tilde \delta_{\alpha_0,1}$. Then the assertion follows from the previous Lemma.
\end{proof}

Let $\alpha,\beta \in \C(\bbbr)$ be functions as in the proof of Prop. \ref{product}.

Let $\ve_0 \in (0,\infty)$.
For $x \in \bbbr$ we define $\ve(x)=\psi(x)^{-1}\ve_0 \in [\ve_0,\infty]$, where $\psi \in \C(\bbbr)$ is an increasing function with $\psi(x)=0$ for $x\le 0.4$ and $\psi(x)=1$ for $x \ge 0.6$. Thus $\ve(x)=\infty$ for $x \le 0.4$.

\begin{prop}
Let $f_{\ra}$ be a homotopy equivalence. 

The index of the operator $\ra_x - D_x$ with $D_x:=\tilde \D^{\ra}_{\alpha(x),\beta(x),\ve(x)}$ vanishes.
\end{prop}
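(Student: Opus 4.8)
The plan is to identify $\ind(\ra_x - D_x)$ with a spectral flow and then to kill that spectral flow by deforming the path $(D_x)$ through a family of boundary operators, in the same spirit as the proof of Prop.~\ref{product}.

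First I would record that the family $(D_x)_{x\in\bbbr}$ is eventually constant: by the choice of $\alpha,\beta$ and $\psi$ one has $D_x=\tilde\D^{\ra}_{\alpha_0,1,\infty}$ for $x\le 0.1$ and $D_x=\tilde\D^{\ra}_{\alpha_0,1,\ve_0}$ for $x\ge 0.9$, so we may restrict attention to $x\in[0,1]$. Since $\alpha_0\neq 0$ and $f_{\ra}$ is a homotopy equivalence, the Hilsum--Skandalis argument recalled in \S\ref{signop} (the proof of \cite[Prop.~1.9]{hs}) shows that $\tilde\D^{\ra}_{\alpha_0,1,\ve}$ is invertible for every $\ve\in[\ve_0,\infty]$; in particular both limiting operators are invertible. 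As in the remarks preceding \cite[Lemma 2.5]{aw}, $D_x:\Hi^1_{\ra}\to\Hi_{\ra}$ depends norm-continuously on $x$, so $\ra_x-D_x$ is a Fredholm operator and $\ind(\ra_x-D_x)$ vanishes if and only if $\spfl((D_x)_{x\in[0,1]})$ does, exactly as in the index computation in \S\ref{rhodef}. It therefore suffices to prove that this spectral flow vanishes.

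Next I would build a homotopy of paths $(D_{t,x})_{(t,x)\in[0,2]\times[0,1]}$ as in the proof of Prop.~\ref{product}: choose $\tilde\alpha,\tilde\beta:[0,2]\times\bbbr\to\bbbr$ which for $t\in[0,1]$ deform $\beta(x)$ linearly to the constant $1$ while leaving $\alpha(x)$ and $\ve(x)$ unchanged, and for $t\in[1,2]$ deform $\alpha(x)$ linearly to the constant $\alpha_0$ while keeping $\beta\equiv 1$ and $\ve(x)$. Since $\beta(x)=1$ on the set where $\alpha(x)\neq 0$, one has $(\tilde\alpha(t,x),\tilde\beta(t,x))\in\ins$ throughout, so $D_{t,x}:=\tilde\D^{\ra}_{\tilde\alpha(t,x),\tilde\beta(t,x),\ve(x)}$ is defined; by the continuity of $\tilde\D^{\ra}$ in all its arguments (including the endpoint $\ve=\infty$) recorded in \S\ref{signop}, this family is a regular selfadjoint operator on $C([0,2]\times[0,1],\Hi_{\ra})$ with domain $C([0,2]\times[0,1],\Hi^1_{\ra})$, and $D_{0,x}=D_x$. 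The crucial book-keeping point is that $\alpha(0)=\alpha(1)=\alpha_0$ and $\beta(0)=\beta(1)=1$, so that $D_{t,0}=\tilde\D^{\ra}_{\alpha_0,1,\infty}$ and $D_{t,1}=\tilde\D^{\ra}_{\alpha_0,1,\ve_0}$ are invertible for every $t\in[0,2]$; hence the homotopy invariance of the spectral flow \cite[\S 4]{wancsf} applies and yields $\spfl((D_{0,x})_{x\in[0,1]})=\spfl((D_{2,x})_{x\in[0,1]})$.

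Finally, $D_{2,x}=\tilde\D^{\ra}_{\alpha_0,1,\ve(x)}$ is invertible for every $x\in[0,1]$, again because $\alpha_0\neq 0$ and $f_{\ra}$ is a homotopy equivalence, so the right hand side vanishes; this gives $\spfl((D_x)_{x\in[0,1]})=0$ and hence $\ind(\ra_x-D_x)=0$. The main obstacle is not the spectral-flow machinery, which is already available from \S\ref{signop} and Prop.~\ref{product}, but the uniform invertibility input: one must know that $\tilde\D^{\ra}_{\alpha_0,1,\ve}$ stays invertible over the whole range $\ve\in[\ve_0,\infty]$ --- this is where homotopy-equivalence of $f_{\ra}$ is genuinely used, through \cite[Prop.~1.9]{hs} --- and that the deformation of $\alpha,\beta$ keeps the endpoints $x=0,1$ invertible, which is precisely why $\alpha$ and $\beta$ were chosen constant near those points.
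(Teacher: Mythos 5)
Your proposal is correct and follows essentially the same route as the paper: reduce the index to $-\spfl((D_x)_{x\in[0,1]})$, homotop the path via the $\tilde\alpha,\tilde\beta$ of Prop.~\ref{product} (keeping $\ve(x)$ fixed, so endpoints at $x=0,1$ remain the invertible operators $\tilde\D^{\ra}_{\alpha_0,1,\infty}$ and $\tilde\D^{\ra}_{\alpha_0,1,\ve_0}$), apply homotopy invariance of the spectral flow, and observe that $D_{2,x}=\tilde\D^{\ra}_{\alpha_0,1,\ve(x)}$ is invertible throughout. The one point the paper flags that you pass over silently is that for $t\neq 0$ the homotoped operators $D_{t,x}$ are in general \emph{not} norm-continuous in $x$ (at points where $\psi(x)=0$, $\psi'(x)>0$), which is exactly why the argument must be phrased in terms of spectral flow of a regular family rather than in terms of $\ind(\ra_x - D_{t,x})$; your argument does use spectral flow and so is sound, but you should not assert norm continuity of $\tilde\D^{\ra}$ ``in all its arguments'' for the two-parameter family.
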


\begin{proof}
First note that the domain of $D_x$ is $\Hi^1_{\ra}$ and thus does not depend on $x$. Furthermore, $D_x:\Hi^1_{\ra}\to \Hi_{\ra}$ depends on $x$ in a continuous way in the norm topology. Here we use that $\ve$ is locally constant on $\supp \alpha \cup \supp \beta$. For $x \notin [0,1]$ the operator $D_x$ is invertible.
The index $\ind(\ra_x - D_x)$ is well-defined and equals $-\spfl((D_x)_{x \in [0,1]})$. 

In the following we homotop the path $(D_x)_{x \in [0,1]}$ to a path of invertible operators through paths with invertible endpoints. The strategy is as in the proof of Prop. \ref{product}. Let $\tilde \alpha$, $\tilde \beta$ be the functions defined there.

We set $D_{t,x}:=\tilde \D^{\ra}_{\tilde\alpha(t,x),\tilde \beta(t,x),\ve(x)}$. The operators $D_{t,0}$, $D_{t,1}$ are invertible for all $t \in [0,2]$.

Furthermore, the family $D_{t,x}$ defines a regular and selfadjoint operator on $C([0,2]\times [0,1],\Hi_{\ra})$ with domain $C([0,2]\times [0,1],\Hi^1_{\ra})$. Note that for $t \neq 0$ the operator $D_{t,x}$ does not depend continuously on $x$ in the operator norm topology at $x$ with $\psi(x)=0,~ \psi'(x)>0$. We use the spectral flow and not the index for our argument since for the index we would need to know that $\ra_x-D_{t,x}$ is Fredholm, which does not follow here from the general results in \cite{aw}.

The homotopy invariance of the spectral flow yields that
$$\spfl((D_{0,x})_{x \in [0,1]})= \spfl((D_{2,x})_{x \in [0,1]}) \ .$$
The family $(D_{2,x})_{x \in [0,1]}$ defines an invertible operator on $C([0,1],\Hi_{\ra})$. Thus the right hand side vanishes.
\end{proof}

Let $\chi: N \cup M \to [0,1]$ be a smooth positive function such  that $\chi(x,y)=1$ for $(x,y) \in Z^+$ with $x \ge 1$ and $\chi(x,y)=0$ for $(x,y) \in Z^+$ with $x \le \frac 12$. We also assume that $\chi|_{(N \cup M) \setminus Z^+}=0$.

We define a selfadjoint operator $\D(A(f_{\ra})_{\alpha_0,1,\ve_0})$ that is odd with respect to the grading operator $\tau=\tilde \tau_{0,0}$ by setting
$$\D(A(f_{\ra})_{\alpha_0,1,\ve_0})^+:=\D^+-c(dx) \chi \tilde \Phi_{0,0} A(f_{\ra})_{\alpha_0,1,\ve_0}\tilde \Phi_{0,0}^{-1} \ .$$
See \cite{lpdir,wazyl} for this type of regularization.

From this definition one gets the adjoint $\D(A(f_{\ra})_{\alpha_0,1,\ve_0})^-$ and thus also $\D(A(f_{\ra})_{\alpha_0,1,\ve_0})$.
If $f_{\ra}$ is a homotopy equivalence, the boundary operator $\D^{\ra} +A(f_{\ra})_{\alpha_0,1,\ve_0}$ is invertible, and thus the operator $\D(A(f_{\ra})_{\alpha_0,1,\ve_0})$ is Fredholm. 

\begin{theorem}
\label{indsign}
Assume that $f_{\ra}$ is a homotopy equivalence. Then $\D(A(f_{\ra})_{\alpha_0,1,\ve_0})^+$ is Fredholm.

If $f$ is a homotopy equivalence, then the index of $\D(A(f_{\ra})_{\alpha_0,1,\ve_0})^+$ vanishes.
\end{theorem}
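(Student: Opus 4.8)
The first assertion has already been observed above: when $f_{\ra}$ is a homotopy equivalence the boundary operator $\D^{\ra}+A(f_{\ra})_{\alpha_0,1,\ve_0}=\tilde\D^{\ra}_{\alpha_0,1,\ve_0}$ is invertible, and hence $\D(A(f_{\ra})_{\alpha_0,1,\ve_0})^+$ is Fredholm.

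For the second assertion, assume in addition that $f$ is a homotopy equivalence, so that by Prop. \ref{invert} the operator $\tilde\D^+_{\alpha_0,1}$ on $N\cup M^{op}$ is invertible; in particular $\ind\tilde\D^+_{\alpha_0,1}=0$. The plan is to prove $\ind\D(A(f_{\ra})_{\alpha_0,1,\ve_0})^+=\ind\tilde\D^+_{\alpha_0,1}$. Both operators are selfadjoint Fredholm perturbations of the signature operator on $N\cup M^{op}$ which, on the cylindrical end, are (after the identification by $\tilde\Phi$ and passage between the gradings $\tau$ and $\tilde\tau_{\alpha,\beta}$) conjugate to translation-invariant operators $c(dx)(\ra_x-B)$ with invertible $B$: here $B=\tilde\D^{\ra}_{\alpha_0,1,\ve_0}$ for $\D(A(f_{\ra})_{\alpha_0,1,\ve_0})^+$, while the computation before Prop. \ref{invert} gives $B=\tilde\D^{\ra}_{\alpha_0,1}=\tilde\D^{\ra}_{\alpha_0,1,\infty}$ for $\tilde\D^+_{\alpha_0,1}$. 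First I would deform $\ve_0$ to $\infty$: the boundary operator $\tilde\D^{\ra}_{\alpha_0,1,\ve}$ stays invertible, so $\ind\D(A(f_{\ra})_{\alpha_0,1,\ve_0})^+$ does not depend on $\ve_0$ and equals $\ind\D(A(f_{\ra})_{\alpha_0,1,\infty})^+$. The remaining discrepancy between $\D(A(f_{\ra})_{\alpha_0,1,\infty})$ and $\tilde\D_{\alpha_0,1}$ — a perturbation in the interior of $N\cup M^{op}$ together with the difference of the two regularizations on the cylindrical end — is exactly accounted for by the path $x\mapsto\tilde\D^{\ra}_{\alpha(x),\beta(x),\ve(x)}$ of the previous proposition, which runs from $\tilde\D^{\ra}_{\alpha_0,1,\infty}$ (for $x\le 0$) to $\tilde\D^{\ra}_{\alpha_0,1,\ve_0}$ (for $x\ge 1$) and whose suspension $\ra_x-\tilde\D^{\ra}_{\alpha(x),\beta(x),\ve(x)}$ has index zero.

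To implement this I would glue onto the cylindrical end of $N\cup M^{op}$ a copy of $\bbbr\times\ra$ carrying the translation-interpolating operator $c(dx)\bigl(\ra_x-\tilde\D^{\ra}_{\alpha(x),\beta(x),\ve(x)}\bigr)$ — inserting the identifications $\tilde\Phi_{\alpha,\beta}$ and the conjugations aligning the gradings and chirality operators along the seam, as permitted by the formulas derived before Prop. \ref{invert} — obtaining a Fredholm operator $\hat D$ on $N\cup M^{op}$ with boundary operator $\tilde\D^{\ra}_{\alpha_0,1,\ve_0}$. On the one hand $\hat D$ differs from $\D(A(f_{\ra})_{\alpha_0,1,\ve_0})^+$ only by a perturbation supported in the interior, which leaves the invertible boundary operator fixed, so the straight-line homotopy between them remains Fredholm and $\ind\hat D=\ind\D(A(f_{\ra})_{\alpha_0,1,\ve_0})^+$. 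On the other hand, cutting along the seam, the additivity of the index under gluing along a hypersurface with invertible cross-section operator — which here I would deduce by applying the Atiyah--Patodi--Singer index theorem for Dirac operators over $C^*$-algebras (Theorem 9.4 of \cite{wazyl}) to $\hat D$, to $\tilde\D^+_{\alpha_0,1}$ and to the inserted cylinder separately, so that the local Hirzebruch integrands over the complementary pieces add up while the $\eta$-forms of the matching boundary pieces cancel — gives $\ind\hat D=\ind\tilde\D^+_{\alpha_0,1}+\ind\bigl(\ra_x-\tilde\D^{\ra}_{\alpha(x),\beta(x),\ve(x)}\bigr)=0+0=0$. Alternatively the whole comparison can be phrased, exactly as in the proof of Prop. \ref{product}, as the vanishing of the spectral flow of a suitable path of operators on $N\cup M^{op}$ whose boundary operators trace out $\tilde\D^{\ra}_{\alpha(x),\beta(x),\ve(x)}$, invoking the homotopy invariance of the spectral flow of \cite{wancsf}. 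The main obstacle is precisely this bookkeeping in the gluing step: one must insert the identifications $\tilde\Phi_{\alpha,\beta}$ and the conjugations between $\tau$ and $\tilde\tau_{\alpha,\beta}$ so that the interior Hilsum--Skandalis perturbation, the localized Atiyah--Patodi--Singer perturbation near $\ra$, and the interpolating cylinder assemble into one continuous family with invertible boundary operator; once the gradings are aligned the vanishing of the index is a formal consequence of Prop. \ref{invert} and the preceding proposition.
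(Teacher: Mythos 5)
Your proposal follows the same conceptual skeleton as the paper's proof: establish Fredholmness from invertibility of the boundary operator, construct an interpolating operator coinciding with the Hilsum--Skandalis perturbation $\tilde\D_{\alpha_0,1}$ in the interior and with the localized Atiyah--Patodi--Singer regularization far out on the end, split the index by cutting along a hypersurface with invertible cross-section operator, and feed in Prop.\ \ref{invert} and the preceding proposition. The weak point is the justification you propose for the gluing step. You want to deduce the additivity $\ind \hat D = \ind \tilde\D^+_{\alpha_0,1} + \ind(\ra_x - \tilde\D^{\ra}_{\alpha(x),\beta(x),\ve(x)})$ by applying the Atiyah--Patodi--Singer index theorem over $C^*$-algebras to the three operators and cancelling $\eta$-forms. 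But that theorem computes $\ch(\ind(\,\cdot\,))$ in $\Oi\Ai/\ov{[\Oi\Ai,\Oi\Ai]_s+\di\Oi\Ai}$, so the cancellation of local terms and $\eta$-forms only yields equality of \emph{Chern characters} of indices, not of the indices themselves in $K_*(\A)$. Since $\ch$ is not injective, this does not prove the theorem as stated, and the theorem genuinely needs the $K$-theoretic statement for the corollary $\sigma(M,\F_M)=\sigma(N,\F_N)$ that follows. The paper instead invokes a variant of Bunke's relative index theorem \cite{bu}, which gives the additivity directly in $K_*(\A)$; this is what the preceding paragraph of the paper recalls, and it is not a consequence of the APS formula. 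Your parenthetical ``alternatively \dots\ the vanishing of the spectral flow'' does gesture at a $K$-theoretic route (the paper uses $KK$-homotopy for the final step $D^{ip}\rightsquigarrow \D(A(f_{\ra})_{\alpha_0,1,\ve_0})$), but as written it is a vague alternative rather than the proof.

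A secondary concern is your opening deformation ``first deform $\ve_0$ to $\infty$''. The operator $\T_{v,\ve}(p)$ only converges to $T_v(p)$ strongly as $\ve\to\infty$, not in norm, and on a manifold with cylindrical ends the Fredholmness of a one-parameter family of the form $\ra_x-D_{t,x}$ is not automatic from the general results of \cite{aw}; the paper explicitly notes this and works with spectral flow rather than index for exactly this reason. The paper avoids the detour by building $D^{ip}$ with the cutoff $\ve(x)$ already built into the cylindrical end at fixed finite $\ve_0$, and then homotoping $D^{ip}$ to $\D(A(f_{\ra})_{\alpha_0,1,\ve_0})$ through a family $(D^{ip}_t)_{t\in[0,2]}$ for which it carefully constructs a parametrix to get $KK$-continuity. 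Your $\ve_0\to\infty$ step is therefore an unnecessary detour that hides a continuity issue; it is better to keep $\ve_0$ finite throughout, exactly as the paper does.
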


Since in the proof and further on we will repeatedly use variants of the relative index theorem in \cite{bu}, we quickly recall its idea:

Assume that a closed oriented Riemannian $R$ decomposes as $R=R_1 \cup_X R_2$ for two manifolds $R_1, R_2$ with common boundary $X$ and assume that $D:\C(R,E) \to \C(R,F)$ is a first order elliptic differential operator which is of the form $u(\ra_x - D_X)$ in a tubular neighbourhood of $X$, with $D_X$ a differential operator on $\C(X,E|_X)$ and $u:E|_X \to F|_X$ a unitary isomorphism. If $D_X$ is invertible, then the induced operators $D_1,~D_2$ on $R_1,~R_2$ are Fredholm (after attaching cylindric ends) and $\ind(D)=\ind(D_1)+\ind(D_2)$. 

The theorem was proven by Bunke for Dirac operators associated to $C^*$-vector bundles over certain complete manifolds cut by a compact hypersurface. The proof extends in a straightforward way to more general Fredholm operators which have the structure $u(\ra_x-D_X)$ near the compact hypersurface $X$. Note that if $R_1$ is a cylinder and $D_1$ translation invariant, then $\ind(D_1)=0$.

\begin{proof}
It remains to prove the second claim. Let $D^{ip}$ be the selfadjoint regular Fredholm operator on $\Hi$ which is odd with respect to the grading operator $\tilde \tau_{\alpha_0,1}$ and which equals $\tilde \D_{\alpha_0,1}$ on $(N \cup M) \setminus Z^+$
and on the cylindrical end $Z^+$ fulfills
$$(D^{ip})^+=c(dx)\left(\ra_x-\tilde \Phi_{\alpha_0,1}\tilde \D^{\ra}_{\alpha(x),\beta(x),\ve(x)}\tilde \Phi_{\alpha_0,1}^{-1}\right) \ .$$

By a variant of the relative index theorem in \cite{bu} 
$$\ind((D^{ip})^+)=\ind((\tilde \D_{\alpha_0,1})^+)+ \ind(\ra_x- \tilde \D^{\ra}_{\alpha(x),\beta(x),\ve(x)}) \ .$$

Then the previous two propositions imply that $\ind((D^{ip})^+)=0$.

Now we define a path $(D_t^{ip})_{t\in [0,2]}$ of selfadjoint regular Fredholm operators on $\Hi$ with $D_0^{ip}=D^{ip}$ and $D_2^{ip}=\D(A(f_{\ra})_{\alpha_0,1,\ve_0})$. 

Let $\ov \alpha,\ov \beta: [0,2]\times [0,\infty) \to  \bbbr$ be given by
\begin{align*}
\ov\alpha(t,x)&=\left\{\begin{array}{ll} \alpha(x) & t=0, x \in [0,\infty);~ t\in (0,2], x >0.5\\
(1-t)\alpha(x) & t\in (0,1], x \le 0.5\\
0 &  t=(1,2], x \le 0.5 \ , 
\end{array} \right.\\
\ov \beta(t,x) &=\left\{\begin{array}{ll} \beta(x) & t \in [0,1], x \in [0,\infty);~ t \in (1,2], x >0.5\\
(1-t)\beta(x) & t\in (1,2], x \le 0.5 \ . 
\end{array}\right.
\end{align*}

The functions are continuous since $\alpha(0.5)=\beta(0.5)=0$. Note that $\ov{\beta}|_{\supp\ov \alpha}=1$.

The path of operators $(D_t^{ip})_{t\in [0,2]}$ is defined by requiring that $D_t^{ip}$ is odd with respect to $\tilde\tau_{\ov\alpha(t,0),\ov\beta(t,0)}$ and that on $(N \cup M) \setminus Z^+$
$$D^{ip}_t=\tilde \D_{\ov\alpha(t,0),\ov\beta(t,0)}$$ 
and on $Z^+$
$$(D^{ip}_t)^+=c(dx)\left(\ra_x-\tilde\Phi_{\ov\alpha(t,0),\ov\beta(t,0)}\tilde \D^{\ra}_{\ov\alpha(t,x),\ov\beta(t,x),\ve(x)}\tilde\Phi_{\ov\alpha(t,0),\ov\beta(t,0)}^{-1}\right) \ .$$
The operator $D^{ip}_t$ has domain $\Hi^1$ and depends in a continuous way on $t$ as a bounded operator from $\Hi^1$ to $\Hi$. Note that on the cylindrical end with $x>2$ it does not depend on $t$. Thus we can construct a local parametrix there which is independent of $t$. A local parametric on the cylindrical piece with $x \in [0,3]$ may be constructed as in \cite{aw}. One may use a doubling construction to get a local parametrix on the remaining compact part: The doubled operator has compact resolvents and thus is Fredholm.

Hence $(D^{ip}_t)_{t \in [0,2]}$ is Fredholm as an operator from $C([0,2],\Hi^1)$ to $C([0,2],\Hi)$. It follows that the odd operator $(D^{ip}_t)_{t \in [0,2]}$ on the $\bbbz/2$-graded space $C([0,2],\Hi)$ with grading operator $(\tilde\tau_{\ov\alpha(t,0),\ov\beta(t,0)})_{t\in [0,2]}$ defines an element in $KK_0(\bbbc,C([0,2],\A))$, see \cite[\S 2.1]{wancsf}. The homotopy invariance of $KK$-theory implies that
$$0=\ind((D^{ip}_0)^+)=\ind((D^{ip}_2)^+) =\ind(\D(A(f_{\ra})_{\alpha_0,1,\ve_0}^+) \ .$$ 
\end{proof}

All results also hold with $-\alpha_0$ instead of $\alpha_0$.

Before returning to $\rho$-invariants we apply the result to higher signatures and  $L^2$-signatures for manifolds with boundary. (We tacitly use that Atiyah--Patodi--Singer index problems for manifolds with boundary can be translated into index problems for manifolds with cylindrical ends.) 

Consider the situation in \S \ref{high}. The definition of higher signature classes for manifolds with cylindrical ends is due to Leichtnam and Piazza \cite{lpsign,lpdir}; see \cite{waprod} for the variant used here. The higher signature class $\sigma(M,\F_M)$ of $M$ is defined if the range of the closure of $d:\Omega^{m-1}(\ra M,\F_M)\to \Omega_{(2)}^m(\ra M, \F_M),~m=\dim M/2$ is closed. It equals $\ind(\D_M(A))$, where $A$ is a bounded selfadjoint operator vanishing on $V_{\ra M}\subset \Omega_{(2)}^*(\ra M,\F_M)$ and anticommuting with the operator $\inv_{\ra M}$. Here the operator $\D_M(A)$ is constructed from $\D_M$ and $A$ in analogy to the construction of $\D(A(f_{\ra})_{\alpha_0,1,\ve_0})$ from $\D$ and $A(f_{\ra})_{\alpha_0,1,\ve_0}$ given above. 

The homotopy invariance of a conical signature class defined in the same situation but using a different regularization was proven in \cite{llp}. Both signature classes agree, see \cite[Prop. 9.1]{waprod}. The following corollary gives a direct proof of the homotopy invariance of the cylindrical signature class.

\begin{cor}
Let $f:M \to N$, $\F_M$, $\F_N$ be as in the beginning of this section. 

Assume that the range of the closure of $d:\Omega^{m-1}(\ra N \cup \ra M,\F)\to \Omega_{(2)}^m(\ra N\cup \ra M, \F)$ with $m=\dim M/2$ is closed. 

If $f$ is a homotopy equivalence, then $\sigma(M,\F_M)=\sigma(N,\F_N)$.
\end{cor}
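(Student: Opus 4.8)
The plan is to recognise the operator $\D(A(f_{\ra})_{\alpha_0,1,\ve_0})$ of Theorem \ref{indsign}, whose index we already know to vanish, as one of the operators computing the higher signature class of $N\cup M^{op}$, and then to read off the asserted equality. Note first that $f_{\ra}$ is a homotopy equivalence, being the restriction of the homotopy equivalence $f$ to the cross section. I would then fix $\ve_0\in(0,\infty)$ small enough that $\phi_{4\ve_0}(\D_{\ra})$ vanishes on $V_{\ra}$, equivalently $\Ran\phi_{4\ve_0}(\D_{\ra})\subset W_{\ra}$, where $V_{\ra}$, $W_{\ra}$, $\inv_{\ra}$ are the subspaces and involution of \S\ref{high} formed for $\ra N\cup\ra M$; this is possible precisely because the closed range hypothesis says that $\D_{\ra}$ is invertible on $V_{\ra}$. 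Since $A(f_{\ra})_{\alpha_0,1,\ve_0}$ is $\ve_0$-spectrally concentrated, it equals $\phi_{4\ve_0}(\D_{\ra})A(f_{\ra})_{\alpha_0,1,\ve_0}\phi_{4\ve_0}(\D_{\ra})$, hence vanishes on $V_{\ra}$ and takes values in $W_{\ra}$.

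Now the argument in the proof of Proposition \ref{prophigh} applies verbatim on the boundary: the operators $\Li_{\alpha_0,1,\ve_0}$ and $\delta_{\alpha_0,\ve_0}$ built from $f_{\ra}$ decompose as $1\oplus(\cdot)$ and $d_{\ra}\oplus(\cdot)$ with respect to $V_{\ra}\oplus W_{\ra}$, both commute with $\inv_{\ra}$, while $\tau_{\ra}$ anticommutes with $\inv_{\ra}$ and $(\tau_{\ra}\Li_{\alpha_0,1,\ve_0})^2$, hence also $|\tau_{\ra}\Li_{\alpha_0,1,\ve_0}|$, commutes with $\inv_{\ra}$. Therefore $A(f_{\ra})_{\alpha_0,1,\ve_0}$ is selfadjoint, vanishes on $V_{\ra}$ and anticommutes with $\inv_{\ra}$, i.e.\ it is a symmetric trivializing operator; and since $f_{\ra}$ is a homotopy equivalence the boundary operator $\D_{\ra}+A(f_{\ra})_{\alpha_0,1,\ve_0}$ is invertible, so $\D(A(f_{\ra})_{\alpha_0,1,\ve_0})$ is Fredholm and its index equals, by definition, the higher signature class $\sigma(N\cup M^{op},\F)$.

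Finally I would identify this with $\sigma(N,\F_N)-\sigma(M,\F_M)$. Because the higher signature class is independent of the chosen symmetric trivializing operator (this is part of the definition recalled above; see \cite{lpsign,lpdir,waprod}), one may replace $A(f_{\ra})_{\alpha_0,1,\ve_0}$ by a block-diagonal operator $A_N\oplus A_M$ with $A_N$, $A_M$ symmetric trivializing operators on $\ra N$, $\ra M$; then $\D(A_N\oplus A_M)$ is the direct sum of the regularised signature operator of $N$ twisted by $\F_N$ and that of $M^{op}$ twisted by $\F_M$, so its index is $\sigma(N,\F_N)+\sigma(M^{op},\F_M)=\sigma(N,\F_N)-\sigma(M,\F_M)$, the last equality because reversing the orientation flips the chirality operator and hence negates the signature class. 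Combining this with the second assertion of Theorem \ref{indsign}, which uses that $f$ itself is a homotopy equivalence, the index vanishes and $\sigma(N,\F_N)=\sigma(M,\F_M)$. The only step needing care is the identification in the first two paragraphs, namely that the spectral concentration of $A(f_{\ra})_{\alpha_0,1,\ve_0}$ forces it to vanish on $V_{\ra}$ and that it genuinely anticommutes with $\inv_{\ra}$; everything else is an application of facts already available — independence of $\sigma$ of the trivializing operator, additivity under disjoint union with reversed orientation, and Theorem \ref{indsign}.
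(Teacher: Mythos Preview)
Your proof is correct and follows essentially the same route as the paper's: reduce to showing that $A(f_{\ra})_{\alpha_0,1,\ve_0}$ is a symmetric trivializing operator on the boundary (the paper just cites ``the proof of Prop.~\ref{prophigh}'' for this, whereas you spell it out), so that $\ind(\D(A(f_{\ra})_{\alpha_0,1,\ve_0})^+)=\sigma(N\cup M^{op},\F)=\sigma(N,\F_N)-\sigma(M,\F_M)$, and then invoke Theorem~\ref{indsign}. One small remark: you should write $\D(A(f_{\ra})_{\alpha_0,1,\ve_0})^+$ rather than the full odd operator when speaking of the index, and your claim that $f_{\ra}$ is a homotopy equivalence because $f$ is one deserves a word of justification (it is standard for proper homotopy equivalences with product structure on the ends, but is being used implicitly here as in the paper).
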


\begin{proof}
It holds that $\sigma(N,\F_N)-\sigma(M,\F_M)=\sigma(N \cup M^{op},\F)$.

By the proof of Prop. \ref{prophigh} for $\ve_0>0$ small enough 
$$\sigma(N \cup M^{op},\F)=\ind(\D(A(f_{\ra})_{\alpha_0,1,\ve_0})^+) \ .$$
Now the assertion follows from the theorem.
\end{proof}

In a similar way one obtains an analytic proof of the homotopy invariance of the signature of a manifold with boundary associated to a trace $\nu$ as in \S \ref{L2}. It can be defined as
$$\sign_{\nu}(M):=\nu(1)\int_M L(M)- \eta_{\nu}(\D_{\ra M})\ .$$
A topological proof of its homotopy invariance for $\nu=\nu_{e}$ follows from the results in \cite{ls}. 

\begin{cor}
With the definitions from the beginning of this section it holds that $\sign_{\nu}(M)=\sign_{\nu}(N)$ if $f:M\to N$ is a homotopy equivalence.
\end{cor}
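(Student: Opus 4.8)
The plan is to run exactly the same argument as in the previous corollary, but pairing with the trace $\nu$ instead of working with the higher class directly. Specifically, recall that by definition $\sign_{\nu}(M) = \nu(1)\int_M L(M) - \eta_{\nu}(\D_{\ra M})$, so that
$$\sign_{\nu}(N) - \sign_{\nu}(M) = \nu(1)\int_{N \cup M^{op}} L(N \cup M^{op}) - \bigl(\eta_{\nu}(\D_{\ra N}) - \eta_{\nu}(\D_{\ra M})\bigr).$$
The right-hand side is precisely $\sign_{\nu}(N \cup M^{op})$, the signature of the manifold-with-boundary $N \cup M^{op}$ (with cylindrical ends) associated to $\nu$. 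So it suffices to show $\sign_{\nu}(N \cup M^{op}) = 0$ when $f$ is a homotopy equivalence.

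First I would identify $\sign_{\nu}(N \cup M^{op})$ with the $\nu$-trace of an index. Using the symmetric trivializing operator $\ov A = \tau_{\ra M_2}\,\iu\inv_{\ra M}\tau_{\ra M}$-type construction from the proof of Prop.~\ref{prophigh} (applied to the boundary $\ra N \cup \ra M$), one has that for $\ve_0 > 0$ small the operator $A(f_{\ra})_{\alpha_0,1,\ve_0}$ is a symmetric trivializing operator, and hence $\D(A(f_{\ra})_{\alpha_0,1,\ve_0})^+$ computes the signature class of $N \cup M^{op}$. Passing to the $\nu$-completion and applying the Atiyah--Patodi--Singer index theorem for the von Neumann algebra $\N$ (as in \S\ref{L2}), the $L^2$-index $\Tr_{\nu}\bigl(\ind \D(A(f_{\ra})_{\alpha_0,1,\ve_0})^+\bigr)$ equals the local term $\nu(1)\int_{N \cup M^{op}} L$ minus the $\eta_{\nu}$-contribution of the boundary operator $\D^{\ra} + A(f_{\ra})_{\alpha_0,1,\ve_0}$; but by Prop.~\ref{L2prop}-type reasoning (or directly, since the perturbation is spectrally concentrated and $\nu$-trace-class differences integrate to the delocalized contribution which here is trivial) the $\eta_{\nu}$ of the perturbed boundary operator equals $\eta_{\nu}(\D_{\ra N}) - \eta_{\nu}(\D_{\ra M})$. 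Thus $\Tr_{\nu}\bigl(\ind \D(A(f_{\ra})_{\alpha_0,1,\ve_0})^+\bigr) = \sign_{\nu}(N \cup M^{op})$.

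Then I would invoke Theorem~\ref{indsign}: since $f$ is a homotopy equivalence, $\ind \D(A(f_{\ra})_{\alpha_0,1,\ve_0})^+ = 0$ in $K_0(\A)$, so its image under the $\nu$-trace is $0$, giving $\sign_{\nu}(N \cup M^{op}) = 0$ and hence $\sign_{\nu}(M) = \sign_{\nu}(N)$.

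The main obstacle is the bookkeeping connecting the $\eta_{\nu}$-term appearing in the APS formula for the perturbed operator $\D(A(f_{\ra})_{\alpha_0,1,\ve_0})^+$ with the unperturbed $\eta_{\nu}(\D_{\ra N}) - \eta_{\nu}(\D_{\ra M})$; one must check that the $\ve_0$-spectrally concentrated perturbation $A(f_{\ra})_{\alpha_0,1,\ve_0}$ contributes nothing new to the $\nu$-regularized $\eta$ beyond what the spectral sections / symmetric trivializing picture already accounts for. This is handled by the same approximation argument used in the proof of Prop.~\ref{L2prop} (the analogue of \cite[Theorem~10.1]{ps}), together with the observation that the perturbed boundary operator is connected to $\D_{\ra N} \oplus (-\D_{\ra M})$ through invertible operators of the form $U_t \tilde\D U_t$ with $U_t - 1$ spectrally concentrated, so that no spectral flow and no change of $\eta_{\nu}$ occurs. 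Once this identification is in place, the conclusion is immediate from Theorem~\ref{indsign}, exactly as in the previous corollary.
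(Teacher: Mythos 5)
Your overall strategy---reduce to showing that the $\nu$-signature of $N\cup M^{op}$ vanishes, identify it as the $\nu$-trace of the perturbed APS index via the index theorem over $C^*$-algebras, and invoke Theorem~\ref{indsign}---is the same as the paper's. But there are two substantive issues in the middle step.

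First, you route the identification of the boundary $\eta$-contribution through the symmetric trivializing operator framework of Prop.~\ref{prophigh}, claiming that $A(f_{\ra})_{\alpha_0,1,\ve_0}$ is a symmetric trivializing operator with respect to $\inv$ so that the index ``computes the signature class of $N\cup M^{op}$''. That framework requires the closed range hypothesis on $d$ in middle degree from \S\ref{high}, which is not assumed in this corollary (the $L^2$-$\eta$-invariant $\eta_{\nu}$ exists unconditionally; the paper deliberately does \emph{not} impose that hypothesis here). You do hedge with ``or directly, by Prop.~\ref{L2prop}-type reasoning'', which is the correct route, but as written your primary route invokes machinery that is not available.

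Second, and more importantly, Prop.~\ref{L2prop} establishes $\nu(\eta(f_{\ra}))=\eta_{\nu}(\D_{\ra N})-\eta_{\nu}(\D_{\ra M})$ only for the symmetrized form $\eta(f_{\ra})=\tfrac 12\bigl(\eta(\D^{\ra},A(f_{\ra})_{\alpha_0,1,\ve_0})+\eta(\D^{\ra},A(f_{\ra})_{-\alpha_0,1,\ve_0})\bigr)$; the proof in the paper even records that $\eta(\D,A^{PS}_{\alpha,\ve})$ depends on the sign of $\alpha$, so the two summands need not agree. Your proposal works only with $+\alpha_0$, and the claim that ``the $\eta_{\nu}$ of the perturbed boundary operator equals $\eta_{\nu}(\D_{\ra N})-\eta_{\nu}(\D_{\ra M})$'' does not follow from Prop.~\ref{L2prop} for a single sign. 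The fix is exactly what the paper does: apply Theorem~\ref{indsign} to both $\pm\alpha_0$, average the two vanishing $\nu$-indices, apply the APS index theorem to the average, and then invoke Prop.~\ref{L2prop} for the symmetrized $\eta(f_{\ra})$.
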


\begin{proof}
By the previous theorem and the Atiyah--Patodi--Singer index theorem over $C^*$-algebras 
\begin{align*}
0&=\frac 12\Bigl(\nu\ind(\D(A(f_{\ra})_{\alpha_0,1,\ve_0})^+)+\nu\ind(\D(A(f_{\ra})_{-\alpha_0,1,\ve_0})^+)\Bigr)\\
&=\nu(1)\int_N L(N)-\nu(1)\int_M L(M)-\nu (\eta(f_{\ra}))\ .
\end{align*}
By Prop. \ref{L2prop}
$$\nu(\eta(f_{\ra}))=\eta_{\nu}(\D_{\ra N})-\eta_{\nu}(\D_{\ra M}) \ .$$
Thus $$\nu(1)\int_N L(N)-\nu(1)\int_M L(M)-\nu (\eta(f_{\ra}))=\sign_{\nu}(N)-\sign_{\nu}(M) \ .$$
\end{proof}

Now we turn to the structure set:
Let $N$ be an odd-dimensional closed oriented connected manifold. The surgery structure set ${\mathcal S}(N)$ consists of pairs $(M,f)$, where $M$ is a closed oriented manifold and $f:M \to N$ is an orientation preserving homotopy equivalence. Two pairs $(M_0,f_0)$, $(M_1,f_1)$ are equivalent if there is an oriented manifold $W$ with boundary $\ra W=M_1 \cup M_0^{op}$ and an orientation preserving homotopy equivalence $F:W \to [0,1] \times N$ of manifolds with boundaries such that $F|_{M_i}=f_i,~i=0,1$. 

Let $\tilde N \to N$ be a Galois covering with deck transformation group $\Gamma$.

We let $\A$, $\A_{\infty}$ be as in \S \ref{rhodef} and set $\F_N:=\tilde N \times_{\Gamma} \A$. With these data we get for each pair $(M,f)$ a noncommutative $\rho$-form
$\rho(f)$.

\begin{cor}
\label{corrhoS}
The $\rho$-form yields a well-defined map $$\rho^{{\mathcal S}}:{\mathcal S}(N) \to \Oi\Ai/\ov{[\Oi\Ai,\Oi\Ai]_s + \di\Oi\Ai+\Oi^{<e>}\Ai}$$
$$[(M,f)]\mapsto \rho(f) \ .$$
\end{cor}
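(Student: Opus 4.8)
The plan is to show that $\rho(f)$ depends only on the equivalence class of $(M,f)$ in $\mathcal S(N)$, i.e.\ that if $(M_0,f_0)$ and $(M_1,f_1)$ are equivalent via a manifold with boundary $W$ and a homotopy equivalence $F:W \to [0,1]\times N$, then $\rho(f_0)=\rho(f_1)$. First I would translate the cobordism $W$ with boundary into a manifold with cylindrical ends, attaching half-cylinders $\mathbb R^+\times M_i$ to the two boundary components, and similarly replace $[0,1]\times N$ by $\mathbb R \times N$; the map $F$ is arranged to be of product form on the ends, so that $f_\partial$ (the restriction of $F$ to a cross-section) is a homotopy equivalence $M_1\cup M_0^{op}\to N$ — indeed it is precisely $f_1\cup f_0^{op}$ up to the appropriate identification. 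This is exactly the setup of the previous section, with the odd-dimensional cross-section playing the role of ``$\partial$''.

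The key analytic input is Theorem \ref{indsign}: since $F$ is a homotopy equivalence (not just $f_\partial$), the index of $\D(A(f_\partial)_{\alpha_0,1,\ve_0})^+$ vanishes in $K_0(\A)$, and likewise with $-\alpha_0$. I would then apply the Atiyah--Patodi--Singer index theorem for Dirac operators over $C^*$-algebras (Theorem 9.4 of \cite{wazyl}) to the operator $\D(A(f_\partial)_{\pm\alpha_0,1,\ve_0})$ on the manifold with cylindrical ends $W$ (or rather its cross-sectional collapse). This expresses $0 = \ch(\ind(\cdots))$ as a sum of a local (interior) contribution $(2\pi\iu)^{-(n+1)/2}\int_W L(W)\ch(\F_W)$ plus the $\eta$-form of the boundary operator, which by construction is $\eta(\D,A(f_\partial)_{\pm\alpha_0,1,\ve_0})$ — and averaging over $\pm\alpha_0$ gives $\eta(f_\partial)$, hence (after passing to the quotient) $\rho(f_\partial)$. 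Since $W$ is a genuine compact manifold with boundary, the local term $\int_W L(W)\ch(\F_W)$ is a form of the form $\di(\cdots)$ plus something supported away from the delocalized part; more precisely, by the same reasoning used in \S\ref{rhodef} for the metric-independence argument, the local contribution lies in $\Oi^{<e>}\Ai$ modulo the closure of supercommutators and exact forms. Therefore $\rho(f_\partial)=0$ in the relevant quotient.

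It then remains to identify $\rho(f_\partial)$, where $f_\partial: M_1\cup M_0^{op}\to N$, with $\rho(f_1)-\rho(f_0)$. For this I would invoke the additivity result, Proposition \ref{propadd}, together with the behaviour of $\eta$ under orientation reversal: $\eta(f^{op})=-\eta(f)$ for the reversed-orientation map, which follows from the fact that reversing orientation on $M$ replaces $\D_M$ by $-\D_M$ and $\tau_M$ by $-\tau_M$, hence flips the sign of the $\eta$-form. Combined with the fact that $\eta$ for the ``disjoint union'' map $N\cup M^{op}$ is built from the direct sum operator $\D_N\oplus(-\D_M)$ exactly as in \S\ref{setting}, one gets $\eta(f_\partial) = \eta(f_1)-\eta(f_0)$ (the term $\eta(\id)$-type corrections from Proposition \ref{propadd} contribute only $<e>$-localized pieces, or cancel, since $\id_N$ contributes nothing to $\rho$). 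Passing to the quotient defining the $\rho$-form then yields $\rho(f_1)=\rho(f_0)$, which is the desired well-definedness.

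The main obstacle I expect is the bookkeeping in the second and third steps: making precise that the APS index theorem over $C^*$-algebras applies to the regularized operator $\D(A(f_\partial)_{\alpha_0,1,\ve_0})$ on the cobordism $W$ (one must check $A(f_\partial)$ is an adapted perturbation in the sense of \cite{wazyl} and that the relative-index / gluing arguments behind Theorem \ref{indsign} go through on a general $W$ rather than a product), and carefully tracking which pieces of the local Hirzebruch--Chern integrand genuinely land in $\Oi^{<e>}\Ai$ modulo exact forms and supercommutators. The orientation-reversal sign and the reconciliation of $\eta(f_\partial)$ with $\eta(f_1)-\eta(f_0)$ via Proposition \ref{propadd} is essentially formal once the conventions of \S\ref{setting} are unwound, but it must be done consistently with the sign conventions fixed there.
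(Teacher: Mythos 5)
Your proposal follows the paper's proof in its essential structure: pass to cylindrical ends, apply Theorem~\ref{indsign} to the homotopy equivalence $F\colon W \to [0,1]\times N$, invoke the Atiyah--Patodi--Singer index theorem over $C^*$-algebras (Theorem~9.4 of~\cite{wazyl}), note that the local contribution lies in $\Oi^{<e>}\Ai$, and conclude that the boundary $\eta$-contribution vanishes in the quotient. That is exactly the argument.

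The one place where your reasoning drifts is in identifying the boundary term, where you reach for the wrong tool. Proposition~\ref{propadd} is about additivity of $\eta$ under \emph{composition} of homotopy equivalences, $\eta(f\circ g)+\eta(\id_M)=\eta(f)+\eta(g)$, and it plays no role here. The boundary of $[0,1]\times N$ is $N\cup N^{op}$, so the cross-section of $F$ is the disjoint-union map $f_1\cup f_0^{op}\colon M_1\cup M_0^{op}\to N\cup N^{op}$, not a map to $N$ as you wrote. Since the entire Hilsum--Skandalis construction (the operator $T_v(p)$, the perturbation $A$, the superconnection) splits as a direct sum over connected components of source and target, the $\eta$-form is additive over disjoint unions; and reversing the orientation of \emph{both} source and target flips the sign of the perturbed signature operator, so $\eta(f_0^{op})=-\eta(f_0)$. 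Hence $\eta(f_1\cup f_0^{op})=\eta(f_1)-\eta(f_0)$ directly, with no $\eta(\id)$-type correction terms to argue away. Your parenthetical hedge ("contribute only $<e>$-localized pieces, or cancel") signals that you sensed something was off; the resolution is simply that Proposition~\ref{propadd} never enters.
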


\begin{proof}
Let $(M_0,f_0)$, $(M_1,f_1)$ be two equivalent pairs.
We apply the theorem to the homotopy equivalence $F:W \to [0,1]\times N$. One calculates the Chern character of the index in $\Oi\Ai/\ov{[\Oi\Ai,\Oi\Ai]_s + \di\Oi\Ai+\Oi^{<e>}\Ai}$ via the Atiyah--Patodi--Singer index theorem over $C^*$-algebras. The local term vanishes since we divide out the forms $\Oi^{<e>}\Ai$ localized at the identity. The contribution from the boundary components is $\rho(f_1)-\rho(f_0)$. Since the index vanishes, it also vanishes.
\end{proof}

\section{Compatibility with the $L$-group action}
\label{compat}

Assume that $n=\dim N \ge 4$ and $\Gamma=\pi_1(N)$.
Now we study how the action of $L_{n+1}(\bbbz\Gamma)$ on $\surg(N)$ combines with $\rho^{{\mathcal S}}$. 
We refer to the literature (for example \cite[\S 5.3]{lu}) for its definition and only recall the relevant features here. 

Let $[(M,f)]\in \surg(N)$.
 
An element $a \in L_{n+1}(\bbbz\Gamma)$ can be represented by a normal map of manifolds with boundary $F:W \to [0,1] \times M$ of degree one between a manifold $W$ with two boundary components $\ra_0 W, ~\ra_1 W$ and $[0,1]\times M$ such that $\ra_0 F:=F|_{\ra_0 W}:\ra_0 W \to \{0\} \times M$ is a diffeomorphism and $\ra_1 F:=F|_{\ra_1 W}:\ra_1 W \to \{1\}\times M$ is a homotopy equivalence. (We do not need the normality condition in the following and therefore omit the bundle data.) Then $$a [(M,f)]=[(\ra_1 W,f \circ \ra_1 F)] \ .$$   
By Prop. \ref{propadd} 
\begin{align*}
\rho^{{\mathcal S}}(a[(M,f)])-\rho^{{\mathcal S}}([(M,f)])&=\rho^{{\mathcal S}}([(\ra_1 W,f \circ \ra_1 F)])-\rho^{{\mathcal S}}([(M,f)])\\
&=\rho(f \circ \ra_1 F)-\rho(f)\\
&=\rho(\ra_1 F)-\rho(\id_M) \\
&=\rho(\ra_1 F)-\rho(\ra_0 F) \ .
\end{align*} 

By Theorem \ref{indsign} the operator $\D(A(F|_{\ra W})_{\pm \alpha_0,1,\ve_0}$ is Fredholm since $F|_{\ra W}$ is a homotopy equivalence. Thus, the definition of the map $\sign^L$ in the following theorem makes sense.

\begin{theorem}
\label{commute}
Let $n \ge 4$.
The map $$\sign^L\colon L_{n+1}(\bbbz\Gamma) \to K_*(\A)\ten \bbbz[\frac 12]\ ,$$ 
$$[F:W \to [0,1] \times M] \mapsto \frac 12\Bigl(\ind (\D(A(F|_{\ra W})_{\alpha_0,1,\ve_0})^+)+\ind (\D(A(F|_{\ra W})_{-\alpha_0,1,\ve_0})^+ \Bigr)$$

is well-defined. In $\Oi\Ai/\ov{[\Oi\Ai,\Oi\Ai]_s + \di\Oi\Ai+\Oi^{<e>}\Ai}$ it holds that

$$\rho^{{\mathcal S}}(a[(M,f)]) - \rho^{{\mathcal S}}([(M,f)]) =-\ch \sign^L(a) \ .$$  

Thus, the diagram 
$$\xymatrix{
L_{n+1}(\bbbz \Gamma) \ar[d]^{-\sign^L} \ar[r] & {\mathcal S}(N)\ar[d]^{\rho^{\mathcal S}}\\
K_*(\A)\ten \bbbz[\frac 12] \ar[r]^-{\ch} &\Oi\Ai/\ov{[\Oi\Ai,\Oi\Ai]_s + \di\Oi\Ai+\Oi^{<e>}\Ai}}$$
commutes.
\end{theorem}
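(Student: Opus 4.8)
The plan is to establish the two assertions — well-definedness of $\sign^L$ and the commutativity formula — essentially in parallel, since both rest on the same two tools developed above: the additivity formula (Prop.~\ref{propadd}) and the vanishing/Fredholmness result of Theorem~\ref{indsign}, combined with the Atiyah--Patodi--Singer index theorem for Dirac operators over $C^*$-algebras (Theorem~9.4 of \cite{wazyl}).

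First I would dispose of the commutativity formula, since the bulk of the reduction is already carried out in the displayed computation preceding the theorem statement. Given $a \in L_{n+1}(\bbbz\Gamma)$ represented by $F\colon W \to [0,1]\times M$ with $\ra_0 F$ a diffeomorphism and $\ra_1 F$ a homotopy equivalence, the preamble reduces
$$\rho^{{\mathcal S}}(a[(M,f)]) - \rho^{{\mathcal S}}([(M,f)]) = \rho(\ra_1 F) - \rho(\ra_0 F)$$
using Prop.~\ref{propadd} and $\rho(\id_M)=0$ (which one checks directly: $\id_M$ admits the trivial perturbation, so its $\eta$-form, and hence $\rho$-form, vanishes; alternatively it follows from Prop.~\ref{propadd} applied with $f=g=\id_M$). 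Now I would apply Theorem~\ref{indsign} to the homotopy equivalence $F|_{\ra W}\colon \ra W \to \ra([0,1]\times M)= M\sqcup M^{op}$, viewing $[0,1]\times N$-data as a manifold with cylindrical ends: the operator $\D(A(F|_{\ra W})_{\pm\alpha_0,1,\ve_0})^+$ is Fredholm because $F|_{\ra W}$ restricts to homotopy equivalences on each boundary component. Averaging over $\pm\alpha_0$ and applying the APS index theorem over $C^*$-algebras to this operator, the local (Hirzebruch $L$-class) term vanishes modulo $\Oi^{<e>}\Ai$ exactly as in Cor.~\ref{corrhoS}, and the boundary contribution is $\eta(\ra_1 F)-\eta(\ra_0 F)$. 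Passing to $\rho$-forms gives $\ch\bigl(\frac12(\ind(\D(A)_{\alpha_0})+\ind(\D(A)_{-\alpha_0}))\bigr) = \rho(\ra_1 F)-\rho(\ra_0 F)$ up to the sign dictated by the orientation of the boundary, yielding $\rho^{{\mathcal S}}(a[(M,f)]) - \rho^{{\mathcal S}}([(M,f)]) = -\ch\sign^L(a)$.

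Next, for well-definedness of $\sign^L$ I would show the averaged index depends only on the class $a \in L_{n+1}(\bbbz\Gamma)$, not on the representative $F$. Two representatives $F_0, F_1$ differ by an $L$-theoretic equivalence: there is a manifold-with-corners bordism $G\colon V \to [0,1]\times[0,1]\times M$ restricting appropriately to the $F_i$ on the top/bottom faces and to diffeomorphisms/homotopy equivalences on the side faces. One assembles the closed-up (cylindrical-end) odd-signature operators for $F_0$ and $F_1^{op}$ together with the interpolating family coming from $G$, and applies a variant of the relative index theorem of \cite{bu} — of the sort already invoked in the proof of Theorem~\ref{indsign} — to conclude $\ind(\D(A(F_0|_{\ra W}))^+) = \ind(\D(A(F_1|_{\ra W}))^+)$ in $K_*(\A)$, hence equality after averaging and tensoring with $\bbbz[\tfrac12]$. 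The factor $\tfrac12$ and the passage to $\bbbz[\tfrac12]$ is what lets the two sign-choices $\pm\alpha_0$ be symmetrized; I would remark that the choices of submersion $p$, form $v$, operator $\Y$, function $\phi_{\ve_0}$ and of $\ve_0,\alpha_0$ themselves do not affect the index, by the spectral-flow/invertibility arguments of \S\ref{rhodef} carried over to the cylindrical-end setting (any two choices are joined by a path of invertible boundary operators, so the Fredholm index is constant along the corresponding path of operators on the manifold with cylindrical ends).

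The main obstacle will be the well-definedness step: making the bordism-invariance argument for $\sign^L$ rigorous requires carefully setting up the Hilsum--Skandalis formalism on the corner bordism $V$ and checking that the relative index theorem of \cite{bu} applies to the (non-Dirac) perturbed signature operators $\D(A)^+$ with their cylindrical-end structure near the cutting hypersurface — exactly the point flagged after the statement of Theorem~\ref{indsign} that "the proof extends in a straightforward way to more general Fredholm operators which have the structure $u(\ra_x - D_X)$ near the compact hypersurface." Verifying that the boundary operators along all faces of the corner are invertible (so that all pieces are genuinely Fredholm) and that the gluing is compatible with the $\tilde\tau$-gradings is the technical heart; once this is in place, the commutativity formula follows essentially formally from the APS index theorem as sketched above. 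The commutativity diagram is then an immediate restatement.
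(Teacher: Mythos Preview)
Your treatment of the commutativity formula is essentially the paper's: the preamble reduces the difference of $\rho$-forms to $\rho(\ra_1 F)-\rho(\ra_0 F)$, and then the Atiyah--Patodi--Singer index theorem applied to the Fredholm operator $\D(A(F|_{\ra W})_{\pm\alpha_0,1,\ve_0})^+$ (Fredholm by Theorem~\ref{indsign}) identifies this with $-\ch\sign^L(a)$ modulo $\Oi^{<e>}\Ai$. One minor point: the paper does not use $\rho(\id_M)=0$ but rather $\rho(\id_M)=\rho(\ra_0 F)$, which follows since $\ra_0 F$ is a diffeomorphism.

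The well-definedness argument, however, has a genuine gap. Invoking a corner bordism and the relative index theorem of \cite{bu} is not enough: the relative index theorem only lets you cut an operator along an invertible hypersurface and add the two indices. It does not by itself tell you that two bordant Fredholm operators have equal index. The missing ingredient is \emph{cobordism invariance} of the index, and to use it you must build the Hilsum--Skandalis operator one dimension up. Concretely, the paper passes to Wall's description $L^1_{n+1}(B\Gamma)$, forms $F=F_0\cup F_1^{op}$, and performs surgery in the interior to obtain a homotopy equivalence $F''$; the trace of these surgeries is a normal map of triads $G\colon (Y,\ra_+Y,\ra_-Y)\to (X,\ra_+X,\ra_-X)$ with $\ra_+G=F\cup (F'')^{op}$ and $\ra_-G$ a product homotopy equivalence. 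On the $(n+2)$-dimensional manifold one then constructs the \emph{odd}-dimensional operator $\tilde\D^G_{\alpha_0,1}=(-\iu)(\tilde\delta^G+\tilde\tau^G\tilde\delta^G\tilde\tau^G)$, computes its boundary operator on the cylindrical end to be (up to the isomorphism $\tilde\Phi$) the operator $\iu\tilde\tau^{\ra}\tilde\D^{\ra}$ with grading $\tilde\tau^{\ra}$, and only then applies cobordism invariance (citing \cite{hils}) to conclude $\ind((\tilde\D^{G|_{\ra Y}}_{\alpha_0,1})^+)=0$. The relative index theorem then splits this into the $\ra_+$- and $\ra_-$-contributions, the latter vanishing by Prop.~\ref{invert}.

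Your proposal skips both the construction of the odd-dimensional operator on the bounding manifold and the cobordism-invariance step entirely; without them there is no mechanism forcing the indices for $F_0$ and $F_1$ to agree. The boundary-operator computation in the odd case (parallel to but distinct from the even-dimensional one in \S\ref{structure}) is precisely what makes cobordism invariance applicable, and is the technical heart of the argument.
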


Note that the diagram is slightly sloppy: If we interpret the first line as the map $L_{n+1}(\bbbz \Gamma) \to {\mathcal S}(N), ~a \mapsto a[(M,f)]$ for some $(M,f) \in {\mathcal S}(N)$, then the vertical map on the right hand side is $\rho^{{\mathcal S}}-\rho^{{\mathcal S}}([(M,f)])$ in order to make the diagram commute. (Same for $(M,f)=(N,\id_N)$ since we do not know whether $\rho^{{\mathcal S}}([(N,\id_N)])$ vanishes.)

\begin{proof}
We only have to show that $\sign^L$ is well-defined.
The compatibility formula follows from the previous calculations and the Atiyah--Patodi--Singer index theorem over $C^*$-algebras.

We consider a more general situation, which will be relevant also for our applications: Let $V$, $W$ be $n+1$-dimensional oriented connected manifolds with boundary and assume given a reference map $V \to B\Gamma$. Furthermore, let $F\colon W \to V$ be a normal map of degree one between oriented manifolds with boundary such that $F|_{\ra W}:\ra W \to \ra V$ is a homotopy equivalence. By \cite[Ch. 9]{wall} we get an element $[F\colon W \to V]$ in a group $L^1_{n+1}(B\Gamma)$ isomorphic to $L_{n+1}(\bbbz\Gamma)$. (To be precise, Wall considered the simple version of $L$-theory, however the result translates easily to the $h$-decorated version used here.) 

The reference map $V\to B\Gamma$ induces $\Gamma$-principal bundles on $V$, $W$, and thus we can define 
$$\sign_+^L(F\colon W \to V):=\ind (\D(A(F|_{\ra W})_{\alpha_0,1,\ve_0})^+) \in K_*(\A)\ .$$  (Tacitly, we choose metrics and glue cylindrical ends to the boundaries whenever necessary.)

In the following we consider representatives $(F_i\colon W_i \to V_i),~i=0,1$ such that $[F_0\colon W_0 \to V_0]=[F_1\colon W_1 \to V_1]$ in $L^1_{n+1}(B\Gamma)$. We show that 
$$\sign_+^L(F_0\colon W_0 \to V_0)=\sign_+^L(F_1\colon W_1 \to V_1) \ .$$ 
Since the proof carries over to $\sign_-^L(F\colon W \to V):=\ind (\D(A(F|_{\ra W})_{-\alpha_0,1,\ve_0})^+)$,
this will establish the assertion.

As before Prop. \ref{invert} we define an operator $\tilde \D_{\alpha_0,1}^{F_i}$ associated to $F_i$. By the proposition it is Fredholm since $F_i|_{\ra W_i}$ is a homotopy equivalence. The index of $\tilde \D_{\alpha_0,1}^{F_i}$ may be nonzero since in general $F_i$ is not a homotopy equivalence. The calculations in the previous section imply that $\ind (\D(A(F_i|_{\ra W_i})_{\alpha_0,1,\ve_0})^+)=\ind ((\tilde \D_{\alpha_0,1}^{F_i})^+)$. In the following we will use the latter expression as a definition of $\sign_+^L(F_i\colon W_i \to V_i)$.

By the arguments in the proof of Theorem 9.4 in \cite{wall} one may attach $1$- and $2$-handles in the interior of $W:=W_0 \cup W_1^{op}$ and $V:=V_0 \cup V_1^{op}$ and extend the map $F:=F_0\cup F_1$ and the reference maps such that for the resulting normal map $(F'\colon  W' \to V')$ the spaces $V', W'$ are connected and the reference map $V' \to B\Gamma$ induces an isomorphism on fundamental groups. Since $[F'\colon  W' \to V']=0$  in $L^1_{n+1}(B\Gamma)$,  one may obtain a homotopy equivalence $(F''\colon W'' \to V')$ by further surgery steps in the interior of $W'$, see for example \cite[Theorem 4.47(3)]{lu}. Here one needs that $n \ge 4$.

Recall that one defines an oriented manifold tried as a triple $(Y,\ra_+ Y, \ra_- Y)$, where $Y$ is an oriented manifold with boundary and $\ra Y$ decomposes as the union of oriented manifolds with boundary $\ra_+Y$ and $\ra_-Y$ with $\ra \ra_+Y = \ra \ra_-Y$.

Since $(F''\colon W'' \to V')$ is constructed from $(F\colon W \to V)$ by surgery leaving the boundary fixed and thus is in the same normal bordism class (see \cite[Theorem 3.59]{lu} and the discussion in \cite[\S 4.7.1]{lu}), there is a normal map of degree one between oriented manifold triads $G\colon (Y,\ra_+ Y, \ra_- Y) \to (X,\ra_+X,\ra_- X)$ and a reference map $X \to B\Gamma$ such that $(G|_{\ra_+ Y}\colon \ra_+ Y \to  \ra_+ X)=(F\cup F''\colon  W \cup (W'')^{op} \to V\cup (V')^{op})$ and the reference maps for $\ra X_+$ and $V \cup (V')^{op}$ agree, too. Namely, $X, ~Y$ are the traces of the surgeries. Furthermore, $\ra_- X=[0,1] \times \ra V,~\ra_- Y=[0,1] \times \ra W$ and $G|_{\ra_- Y}\colon \ra_- Y \to \ra_- X$ equals $\id \times F|_{\ra W}$ and thus is a homotopy equivalence. 

Now the general strategy of the proof is analogous to the one in the positive scalar curvature case, see \cite[\S 1.4]{bu}. It uses the cobordism invariance of the index and the relative index theorem to show that $\sign_+^L(G|_{\ra_+ Y}\colon \ra_+ Y\to \ra_+ X)=0$ and thus $$\sign_+^L(F\colon W \to V)=\sign_+^L(F''\colon W'' \to V') \ .$$ 
Prop. \ref{invert} implies that $\sign_+^L(F''\colon W'' \to V')=0$ since $F''$ is a homotopy equivalence.
By
$$\sign_+^L(F\colon W\to V)=\sign_+^L(F_0\colon W_0\to V_0)-\sign_+^L(F_1\colon W_1\to V_1)$$ this implies that $\sign_+^L$ is well-defined.

We check the details.
Since $(G\colon Y \to X)$ is a degree one map between manifolds with boundary, we can define the operator 
$$\tilde \D^G_{\alpha_0,1}:=(-\iu)(\tilde \delta_{\alpha_0,1}^G + \tilde \tau_{\alpha_0,1}^G\tilde \delta_{\alpha_0,1}^G \tilde \tau_{\alpha_0,1}^G) \ .$$
This is the analogue of the operator in eq. (\ref{hsop}) for odd dimensions.  The following discussion is closely related to the discussion in the classical case in \cite[\S 5.2]{waprod}.

Note that in general the operator $\tilde \D^G_{\alpha_0,1}$ is not Fredholm since $G|_{\ra Y}$ need not be a homotopy equivalence. 

The operator commutes with the involution $\tilde \tau_{\alpha_0,1}^G$. In the following we restrict to the positive eigenspace of $\tilde \tau_{\alpha_0,1}^G$ and determine the boundary operator by adapting the calculations from the previous section. (We omit the indices.) 

On the cylindrical end $\tau=\tau_1\tau_{\ra}$ with $\tau_1=\iu c(dx)$.

We decompose $\tilde \tau^{\ra}=t^{o}+ \gamma_{\ra}t^{e}$. By an analogue of Lemma \ref{changeinvol} on the cylindrical end $\tilde \tau=\tau_1(t^{o}+\gamma_{\ra} t^e)=\tau_1 \tilde \tau^{\ra}$. 

As before, we also decompose $\tilde \delta^{\ra}=d^{o}+ \gamma_{\ra} d^{e}$ and define $\tilde \delta^{cs}:=\tilde \delta-\iu dx\, \ra_x=\iu(\gamma_1 d^o + \gamma d^e)=\iu \gamma_1 \tilde \delta^{\ra}$.

On the cylindrical end
\begin{align*}
\tilde \D^G &=c(dx)\left(\ra_x -  (-\iu) c(dx)(\tilde\delta^{cs}+ \tilde \tau\tilde \delta^{cs}\tilde \tau)\right)\\
&=c(dx)\left(\ra_x -   c(dx)\gamma_1(\tilde \delta^{\ra} -  \tilde \tau^{\ra} \tilde \delta^{\ra} \tilde \tau^{\ra})\right) \ .
\end{align*}

Recall from the previous section that for a form $\omega$ on the boundary
$$\tilde \Phi(\omega)=\frac{1}{\sqrt 2}(dx \wedge \omega + \tilde \tau (dx\wedge \omega))=\frac{1}{\sqrt 2}(dx \wedge \omega - \iu\tilde \tau_{\ra} \omega) \ .$$ 

Thus 
\begin{align*}
\lefteqn{\tilde \Phi^{-1} c(dx)\gamma_1(\tilde \delta^{\ra} -  \tilde \tau^{\ra} \tilde \delta^{\ra} \tilde \tau^{\ra})\tilde \Phi(\omega)}\\
&=\frac{1}{\sqrt 2}\tilde \Phi^{-1} c(dx)\gamma_1(\tilde \delta^{\ra} -  \tilde \tau^{\ra} \tilde \delta^{\ra} \tilde \tau^{\ra})(dx \wedge \omega -\iu\tilde \tau^{\ra}\omega) \\
&=\frac{1}{\sqrt 2}\tilde \Phi^{-1}\bigl((\tilde \delta^{\ra} -  \tilde \tau^{\ra} \tilde \delta^{\ra} \tilde \tau^{\ra})\omega-\iu dx \wedge (\tilde \delta^{\ra}\tilde \tau^{\ra} -  \tilde \tau^{\ra} \tilde \delta^{\ra})\omega\bigr)\\
&=-\iu(\tilde \delta^{\ra}\tilde \tau^{\ra} -  \tilde \tau^{\ra} \tilde \delta^{\ra})\omega =\iu\tilde \tau^{\ra}\tilde \D^{\ra} \ .
\end{align*}

Here $\tilde \D^{\ra}:=\tilde \delta^{\ra} -  \tilde \tau^{\ra} \tilde \delta^{\ra} \tilde \tau^{\ra}=\tilde \D_{\alpha_0,1}^{G|_{\ra Y}}$.

Thus on the cylindrical end and restricted to the positive eigenspace of $\tilde \tau$
$$\tilde \D^G =c(dx)\left(\ra_x -   \tilde \Phi \iu\tilde \tau^{\ra}\tilde \D^{\ra} \tilde \Phi^{-1}\right) \ .$$

It follows that $\iu\tilde \tau^{\ra}\tilde \D^{\ra}$ is the boundary operator of $\tilde \D^G$ (up to the isomorphism $\tilde \Phi$). Its grading operator is given by $\iu \tilde \Phi^{-1}c(dx)\tilde \Phi=\tilde\tau^{\ra}$.

By cobordism invariance the index of $(\iu\tilde \tau^{\ra}\tilde \D^{\ra})^+$ vanishes. (See \cite{hils} for a very general proof of the cobordism invariance.)  Thus also the index of $(\tilde \D^{\ra})^+=(\tilde \D_{\alpha_0,1}^{G|_{\ra Y}})^+$ vanishes.

By (an analogue of) the relative index theorem from \cite{bu}
$$\ind ((\tilde \D_{\alpha_0,1}^{G|_{\ra Y}})^+)=\ind ((\tilde \D_{\alpha_0,1}^{G|_{\ra_+Y}})^+) + \ind ((\tilde \D_{\alpha_0,1}^{G|_{\ra_-Y}})^+) \ .$$
Since $G|_{\ra_-Y}$ is a homotopy equivalence, it holds that $\ind ((\tilde \D_{\alpha_0,1}^{G|_{\ra_- Y}})^+)=0$.

Thus 
$$\sign_+^L(G|_{\ra_+ Y}\colon \ra_+ Y\to \ra_+ X)=\ind ((\tilde \D_{\alpha_0,1}^{G|_{\ra_+Y}})^+)=0 \ .$$

This concludes the proof of the well-definedness.
\end{proof}

\section{Applications}
\label{appl}

In the following we give some typical applications which serve to illustrate how the product formula (Theorem \ref{prodtheo}) can be used to get information on the structure set of products. These are mainly interesting for groups with torsion; 
the results in \cite{ps} imply that the map $\rho^{\mathcal S}$ is trivial for torsionfree groups for which the maximal Baum--Connes conjecture holds.

Let $N_1$ and $N_2$ be oriented closed connected manifolds of odd and even dimension, respectively. We set $\Gamma_i= \pi_1(N_i)$ and use the universal coverings. In the following the notation is as in \S \ref{prodform} (with $N_2=M_2$).

The main difficulty is to find an appropriate projective system $(\A_i)_{i \in \bbbn_0}$. The following two situations are particularly simple:
\begin{enumerate} 
\item $\B$ is finite-dimensional.
\item $\B=C(T^k)$ and the image of the map $\bbbc\Gamma_1 \to C(T^k)$ is in $\C(T^k)$.
\end{enumerate}
Here one may replace $\B$ by $\Ca$ and $\Gamma_1$ by $\Gamma_2$.

We consider the case where (1) or (2) holds for $\B$. We choose a projective system $(\Ca_i)_{i \in \bbbn_0}$ as in \S \ref{rhodef}. 
If (1) holds we set $\A_i:=\B \ten \Ca_i$. Since $\B$ is finite-dimensional, the tensor product here is algebraic, and $\A_i$ is closed under holomorphic functional calculus in $\A$.

In the case of (2) we set $\A_i=\cap_{j+l=i} C^j(T^k,\Ca_l)$. 

The following proposition is a straightforward application of the product formula and of Cor. \ref{corrhoS}. Here $\B=M_m(\bbbc)$. Thus we are in situation (1).

\begin{prop}
Let $h_1,h_2: \Gamma_1 \to U(m)$ be homomorphisms and $\rho_{h_1,h_2}$ the induced Atiyah--Patodi--Singer $\rho$-invariant.
Let $[(M_a,f_a)], [(M_b,f_b)] \in \surg(N_1)$ with $\rho_{h_1,h_2}(M_a) \neq \rho_{h_1,h_2}(M_b)$. Furthermore, assume that $\ch(\ind(\D_{N_2})) \neq 0$. Then  
the elements  $[(M_a \times N_2,f_a \times \id)], [(M_b \times N_2,f_b \times \id)] \in \surg(N_1 \times N_2)$ are not equal.  
\end{prop}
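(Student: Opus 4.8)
The plan is to argue by contradiction and to separate the two tensor factors by a cyclic pairing: the product formula reduces the $\rho$-form of $f_a\times\id$ to $\rho(f_a)\,\ch(\ind\D_{N_2})$, the factor $\rho(f_a)$ is detected on $N_1$ by $\rho_{h_1,h_2}$, and the factor $\ch(\ind\D_{N_2})$ is nonzero by hypothesis.

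So suppose $[(M_a\times N_2,f_a\times\id)]=[(M_b\times N_2,f_b\times\id)]$ in $\surg(N_1\times N_2)$. I would use the universal covering of $N_1\times N_2$, so that $\Gamma=\Gamma_1\times\Gamma_2$, and choose, as in \S\ref{prodform}, $\B:=M_m(\bbbc)\oplus M_m(\bbbc)$ with the homomorphism $\bbbc\Gamma_1\to\B$, $g\mapsto h_1(g)\oplus h_2(g)$, and $\Ca:=C^*\Gamma_2$ equipped with a projective system $(\Ca_i)$ as in \S\ref{rhodef}; since $\B$ is finite dimensional we are in situation (1), so $\A=\B\ten\Ca$, $\A_i=\B\ten\Ca_i$, $\Ai=\B\ten\Ca_{\infty}$, and this is an admissible coefficient algebra. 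By Cor.\ \ref{corrhoS} (whose proof applies to any admissible choice) we get $\rho(f_a\times\id)=\rho(f_b\times\id)$ in $\Oi\Ai/\ov{[\Oi\Ai,\Oi\Ai]_s+\di\Oi\Ai+\Oi^{<e>}\Ai}$. Theorem \ref{prodtheo}, applied with $M_1:=M_c$, common factor $M_2:=N_2$, $f_1:=f_c$ (for $c=a,b$), then gives $\rho(f_c\times\id)=\rho(f_c)\,\ch(\ind\D_{N_2})$, where $\rho(f_c)$ is the $\rho$-form over $\B$ of the homotopy equivalence $f_c\colon M_c\to N_1$; subtracting,
$$\bigl(\rho(f_a)-\rho(f_b)\bigr)\,\ch(\ind\D_{N_2})=0$$
modulo the closure of $[\Oi\Ai,\Oi\Ai]_s+\di\Oi\Ai+\Oi^{<e>}\Ai+(\di_1\Oi\Bi)\Oi\Ca_{\infty}+\Oi\Bi\,\di_2\Oi\Ca_{\infty}$. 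The product on the left is well defined there because $\Oi^{<e>}\Bi\cdot\Oi\Ca_{\infty}\subseteq\Oi^{<e>}\Ai$ (the $\Gamma_1$-constraint is inherited) while the $\di_1$- and graded-commutator relations over $\B$ are absorbed into the listed subspace.

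Next I would pair with the cyclic cocycle $\mu\#\phi$ on $\Ai=\B\ten\Ca_{\infty}$, where $\mu$ is the degree-zero trace $\tr\oplus(-\tr)$ on $\B$ and $\phi$ is a closed graded trace on $\Oi\Ca_{\infty}$ with $\langle\phi,\ch(\ind\D_{N_2})\rangle\neq 0$ — such $\phi$ exists exactly because $\ch(\ind\D_{N_2})\neq 0$. This cocycle annihilates the graded commutators and the $\di$-exact forms by its defining properties, the $\di_1$- resp.\ $\di_2$-terms for degree reasons resp.\ by closedness of $\phi$, and $\Oi^{<e>}\Ai$ because on a generator $\pi(g_0)\di\pi(g_1)\cdots\di\pi(g_k)$ with $\pi(g_0\cdots g_k)=\pi(e)$ the underlying group element of the $\B$-component lies in $\ker h_1\cap\ker h_2$, where $\mu$ vanishes. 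Hence $\mu\#\phi$ descends to the quotient in question, and evaluating it on the displayed identity gives
$$\mu\bigl((\rho(f_a)-\rho(f_b))_0\bigr)\,\langle\phi,\ch(\ind\D_{N_2})\rangle=0 .$$
On the other hand, Prop.\ \ref{L2prop}, applied over $\B$ with the positive normal traces $\tr\oplus 0$ and $0\oplus\tr$ (which pull back to $\nu_{h_1}$ resp.\ $\nu_{h_2}$ on $\bbbc\Gamma_1$, and to the corresponding $L^2$-$\eta$-invariants of the signature operators), yields $\mu(\rho(f_c)_0)=\rho_{h_1,h_2}(N_1)-\rho_{h_1,h_2}(M_c)$, so that $\mu\bigl((\rho(f_a)-\rho(f_b))_0\bigr)=\rho_{h_1,h_2}(M_b)-\rho_{h_1,h_2}(M_a)\neq 0$. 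Together with $\langle\phi,\ch(\ind\D_{N_2})\rangle\neq 0$ this contradicts the previous equation, so $[(M_a\times N_2,f_a\times\id)]\neq[(M_b\times N_2,f_b\times\id)]$.

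The step I expect to need the most care is precisely this bookkeeping: verifying that $\mu\#\phi$ passes to each of the quotients that occurs — above all its vanishing on $\Oi^{<e>}\Ai$, which is exactly why one must use the \emph{difference} of the two unitary representations rather than a single $M_m(\bbbc)$ — and confirming that the product $\rho(f_c)\,\ch(\ind\D_{N_2})$ formed in the $\B$-theory genuinely represents $\rho(f_c\times\id)$ over $\A$. Everything else is a direct combination of Theorem \ref{prodtheo}, Prop.\ \ref{L2prop} and Cor.\ \ref{corrhoS}.
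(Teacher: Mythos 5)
Your proposal is correct and is essentially the argument the paper has in mind — the paper gives only a two--sentence indication ("straightforward application of the product formula and Cor.~\ref{corrhoS}\dots here $\B=M_m(\bbbc)$"), and you have filled in the details in the right way. In fact your choice $\B=M_m(\bbbc)\oplus M_m(\bbbc)$ with $g\mapsto h_1(g)\oplus h_2(g)$ and $\mu=\tr\oplus(-\tr)$ is a genuine improvement on the paper's terse hint: with $\B=M_m(\bbbc)$ alone, the degree-zero part of the $\rho$--quotient is $\B/\ov{[\B,\B]+\bbbc\cdot 1}=0$ (since $[\B,\B]=\ker\tr$ and $1\notin\ker\tr$), so no trace can detect anything there. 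Only the difference of two copies, as you use, gives a trace that annihilates $1$ and hence descends past $\Oi^{<e>}\Bi$; your observation to this effect is exactly the nontrivial point the paper's remark suppresses. The one sentence I would ask you to tighten is the claim $\Oi^{<e>}\Bi\cdot\Oi\Ca_{\infty}\subseteq\Oi^{<e>}\Ai$: it is false as stated, since $1\in\Oi^{<e>}\Bi$ but $\Oi\Ca_{\infty}\not\subseteq\Oi^{<e>}\Ai$ (e.g.\ $\pi_2(b_0)$ with $\pi_2(b_0)\neq 1$). This does not damage the argument, because you never actually need the product $\rho(f_c)\ch(\ind\D_{N_2})$ to be a well-defined class: it suffices to run the chain of equalities at the level of the $\eta$--forms (representatives), deduce that $(\eta(f_a)-\eta(f_b))\ch(\ind\D_{N_2})$ lies in the closure of $[\Oi\Ai,\Oi\Ai]_s+\di\Oi\Ai+\Oi^{<e>}\Ai+(\di_1\Oi\Bi)\Oi\Ca_{\infty}+\Oi\Bi\,\di_2\Oi\Ca_{\infty}$, and then observe that $\mu\#\phi$ vanishes on that whole subspace — for which your verification on generators of $\Oi^{<e>}\Ai$ (the degree-zero $\B$--factor is $\pi_1(a_0\cdots a_k)=1$, on which $\mu$ vanishes) is the correct argument. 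Likewise your use of Prop.~\ref{L2prop} is first an identity for $\mu(\eta(f_c)_0)$, and only then, because $\mu$ annihilates $\Oi^{<e>}\Bi$ in degree zero, an identity for $\mu(\rho(f_c)_0)$. The rest — existence of $\phi$ by Hahn--Banach from $\ch(\ind\D_{N_2})\neq 0$, the factorisation of the pairing, and the final contradiction — is correct.
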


A criterion for the nonvanishing of $\ch(\ind(\D_{N_2}))$ for an appropriate system $(\Ca_i)_{i \in \bbbn_0}$ can be deduced from results by Connes and Moscovici:
Recall that for any (alternating $\Gamma_2$-invariant) group cocycle $\tau$ on $\Gamma_2$ there is an associated reduced cyclic cocycle $c_{\tau}$ on $\bbbc\Gamma_2$ . Let $\Ca=C^*_r\Gamma_2$ and let $(\Ca_i)_{i \in \bbbn_0}$ be the projective system associated to the Connes--Moscovici algebra (\cite{cm}, see for example \cite[pp. 337f.]{waindfor} for details as needed here). If $c_{\tau}$ extends to a continuous cyclic cocycle on $\Ca_i$, then the higher signature of $N_2$ associated to $\tau$ equals $c_{\tau} \ch(\ind(\D_{N_2}))$ by the higher index theorem of Connes and Moscovici. Thus in this case the nonvanishing of the higher signature of $N_2$ associated to $\tau$ implies the nonvanishing of $\ch(\ind(\D_{N_2}))$. By the results of \cite{cm}, for $\Gamma_2$ Gromov hyperbolic the existence of a nonvanishing higher signature implies the nonvanishing of $\ch(\ind(\D_{N_2}))$.

We get a similar proposition for the $L^2$-$\rho$-invariant, however we have to impose additional technical conditions on the groups. Recall from \cite[pp. 384f.]{cm}: 
A group $\Gamma$ endowed with a word length $| \cdot |$ 
\begin{itemize}
\item
has property Rapid Decay (RD) if there are $k \in \bbbn, C>0$ such that for $a =\sum_g a_g g\in \bbbc\Gamma$
$$\|a\|^2_{C_r^*\Gamma} \le C\sum_{g\in \Gamma} (1+|g|)^{2k}|a_g|^2\ .$$
\item
has property Polynomial Cohomology (PC) if for any complex group cohomology class there is a representative of polynomial growth.
\end{itemize}

Gromov hyperbolic groups have property (RD) and (PC). Groups with property (RD) have been extensively studied recently, see for example \cite{cr} and references therein. Less is known about property (PC). See for example \cite{m} for recent results.

\begin{prop}
Assume that $[(M_a,f_a)], [(M_b,f_b)] \in \surg(N_1)$ with $\rho_{(2)}(M_a) \neq \rho_{(2)}(M_b)$ and that one of the following conditions holds:

\begin{enumerate}
\item  $N_2=T^k$.

\item $\Gamma_1$, $\Gamma_2$ have property (RD) and $\Gamma_2$ has in addition property (PC). Furthermore, $N_2$ has a non-vanishing higher signature. 
\end{enumerate}

Then the elements  $[(M_a \times N_2,f_a \times \id)], [(M_b \times N_2,f_b \times \id)] \in \surg(N_1 \times N_2)$ are not equal. 
\end{prop}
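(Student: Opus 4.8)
The plan is to show that the two structure set elements have different $\rho^{\mathcal S}$-images by combining the product formula with the well-definedness of $\rho^{\mathcal S}$ (Corollary \ref{corrhoS}), and then extract enough numerical information by pairing with a suitable cyclic cocycle. The key point is that for the two cases in the statement we must produce a projective system $(\A_i)_{i \in \bbbn_0}$ for $\A = \B \ten \Ca$ with the properties required in \S\ref{rhodef}, and a cyclic cocycle $c$ on $\Ai$ such that $c$, applied to the $\rho$-forms, detects the $L^2$-$\rho$-invariant of $M_1$ times a nonzero multiple of $\ch(\ind(\D_{N_2}))$.

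First I would set $\B = C^*_r\Gamma_1$ (or its relevant completion) with the trace $\nu = \nu_e - \nu_1$ used in the definition of $\rho_{(2)}$; in case (1) take $N_2 = T^k$, so $\Gamma_2 = \bbbz^k$, $\Ca = C(T^k)$ and we are in situation (2) of the preceding discussion with the roles of $\B$ and $\Ca$ interchanged, i.e. set $\A_i = \cap_{j+l=i} C^j(T^k, \B_l)$ where $(\B_l)_l$ is a projective system for $\B$ (which exists in case (1) by property (RD) of $\Gamma_1$, via the Connes--Moscovici/Jolissaint construction; here one uses that $\nu_e - \nu_1$ extends continuously). In case (2) both $\Gamma_1$ and $\Gamma_2$ have property (RD), so one forms $\A_i$ from the Connes--Moscovici algebras of $\Gamma_1$ and $\Gamma_2$ (tensored and truncated as in \S\ref{prodform}), and property (PC) of $\Gamma_2$ guarantees that the group cocycle $\tau$ on $\Gamma_2$ realizing the nonvanishing higher signature of $N_2$ has a polynomial-growth representative, hence $c_\tau$ extends to a continuous reduced cyclic cocycle on $\Ca_i$ for $i$ large.

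Next I would apply Theorem \ref{prodtheo} with $f = f_a \times \id$ and $f = f_b \times \id$ to get, in the appropriate quotient,
$$\rho(f_a \times \id) = \rho(f_a)\,\ch(\ind(\D_{N_2})), \qquad \rho(f_b \times \id) = \rho(f_b)\,\ch(\ind(\D_{N_2})) \ .$$
Now pair both sides with the cyclic cocycle on $\Ai$ obtained as the (exterior) product of the trace $\nu$ on $\B$ with the cocycle $c_\tau$ on $\Ca$ (in case (1) with the fundamental class of $T^k$, which is the relevant higher signature cocycle there). By Prop.~\ref{L2prop} the degree-zero part of $\nu(\rho(f_a)) - \nu(\rho(f_b))$ is exactly $\rho_{(2)}(M_a) - \rho_{(2)}(M_b) \neq 0$ (the delocalized part survives because we have divided out $\Oi^{<e>}\Ai$, and $\nu = \nu_e - \nu_1$ kills the localized-at-identity contribution appropriately). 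Multiplying by $\ch(\ind(\D_{N_2}))$ and pairing with $c_\tau$ gives a nonzero multiple of $\rho_{(2)}(M_a) - \rho_{(2)}(M_b)$ by the higher index theorem of Connes--Moscovici, which expresses the higher signature of $N_2$ as $c_\tau \ch(\ind(\D_{N_2}))$, nonzero by hypothesis. Hence $\rho^{\mathcal S}([(M_a \times N_2, f_a \times \id)]) \neq \rho^{\mathcal S}([(M_b \times N_2, f_b \times \id)])$, and since $\rho^{\mathcal S}$ is well-defined on $\surg(N_1 \times N_2)$ by Corollary \ref{corrhoS}, the two elements are distinct.

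The main obstacle is the construction and verification of the projective system $(\A_i)$ in case (2): one must check that the truncated tensor products of the Connes--Moscovici algebras of $\Gamma_1$ and $\Gamma_2$ satisfy all five bullet conditions of \S\ref{rhodef} (in particular stability under holomorphic functional calculus of each $\A_i$ in $\A = C^*_r\Gamma_1 \ten C^*_r\Gamma_2$, which uses property (RD) for the product group and the minimal tensor product), and that the pairing of the exterior-product cyclic cocycle with the $\rho$-form is compatible with the product formula in the relevant quotient — i.e. that no lower-degree cross-terms in $\di_1, \di_2$ obstruct the pairing. The case-(1) argument is comparatively routine once the analogous care is taken with $C(T^k)$.
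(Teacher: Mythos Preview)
Your overall strategy---apply the product formula, then pair with $(\nu_e-\nu_1)\# c_\tau$---matches the paper's. The construction of the projective system, however, differs in both cases, and in case~(1) your approach has a genuine gap.

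\textbf{Case (1).} You invoke property (RD) for $\Gamma_1$ to build a projective system $(\B_l)$, but case~(1) makes \emph{no} assumption on $\Gamma_1$. The paper avoids this entirely: since $\nu_e$ and $\nu_1$ are both continuous traces on the maximal $C^*$-algebra $\B=C^*\Gamma_1$, no smoothing on the $\B$-side is needed at all. One simply takes $\A_i=C^i(T^k,\B)$ (so $\B_l=\B$ for all $l$). The cocycle $c$ on $\C(T^k)$ given by $c(h_0,\dots,h_k)=\int_{T^k}h_0\,dh_1\cdots dh_k$ extends to a $\B/\ov{[\B,\B]}$-valued map $\tilde c$ on $\C(T^k,\B)$, and composing with $\nu_e-\nu_1$ gives the required continuous cyclic cocycle on $\Ai$. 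Also note that $\nu_1$ need not factor through $C^*_r\Gamma_1$, so working with the maximal $C^*$-algebra (rather than the reduced one you chose) matters here.

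\textbf{Case (2).} Your ``truncated tensor products of the Connes--Moscovici algebras of $\Gamma_1$ and $\Gamma_2$'' is exactly the obstacle you flag, and the paper does not attempt it. Instead the paper observes that $\Gamma=\Gamma_1\times\Gamma_2$ itself has (RD), takes $\Ai$ to be the intersection of the pullback of the Connes--Moscovici algebra of $\Gamma$ with the pullback of that of $\Gamma_2$ (via the canonical maps $\A\to C^*_r\Gamma$ and $\A\to C^*_r\Gamma_2$), and then treats the two pieces of $(\nu_e-\nu_1)\# c_\tau$ separately: $\nu_1\# c_\tau=\pi_2^*c_\tau$ extends via the $\Gamma_2$-factor, while $\nu_e\# c_\tau=c_{\pi_2^*\tau}$ is the cyclic cocycle associated to the polynomial-growth group cocycle $\pi_2^*\tau$ on $\Gamma$ and hence extends via the $\Gamma$-factor. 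This sidesteps any tensor-product holomorphic-closure issue.
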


\begin{proof}
1) Note that $C^*\bbbz^k=C(T^k)$. We take $\A_i=C^i(T^k,\B)$ with $\B=C^*\Gamma_1$. Then $\Ca_i=C^i(T^k)$. The $k$-torus has a nonvanishing higher signature $c(\ch(\ind(\D_{T^k}))$ corresponding to the reduced cyclic cocycle $$c:(\C(T^k))^{\ten^{k+1}} \to \bbbc, ~(h_0, \dots ,h_k)\mapsto \int_{T^k} h_0dh_1 \dots dh_k \ .$$
The cocycle extends to a continuous map $$\tilde c:(\C(T^k,\B))^{\ten^{k+1}} \to \B/\ov{[\B,\B]}, ~(h_0, \dots ,h_k)\mapsto \int_{T^k} h_0dh_1 \dots dh_k \ .$$
It holds that $(\nu_{e}-\nu_1) \circ \tilde c=(\nu_{e}-\nu_1)\# c$. Thus $(\nu_{e}-\nu_1)\# c$ is a continuous cyclic cocycle on $\Ai$. By the product formula we have that
$$((\nu_{e}-\nu_1)\# c)(\rho^{{\mathcal S}}([(M_i\times T^k,f_i\times \id)]))=\rho_{(2)}(M_i)c(\ch(\ind(\D_{T^k})),~i=a,b \ .$$
Thus $$\rho^{{\mathcal S}}([(M_a\times T^k,f_a\times \id)]) \neq \rho^{{\mathcal S}}([(M_b\times T^k,f_b\times \id)]) \ .$$

2) First note that $\Gamma=\Gamma_1\times \Gamma_2$ also has property (RD) with respect to the length induced by the lengths of $\Gamma_1$, $\Gamma_2$ (whose choice is implicit in our assumption). 

We take $\B=C^*\Gamma_1$, $\Ca=C^*\Gamma_2$. We have canonical maps $\pi:\A \to C^*_r \Gamma = C_r^*\Gamma_1 \ten C^*_r\Gamma_2$ and $\pi_2:\A \to C_r^*\Gamma_2$. We take $\Ai$ as the intersection of the pullback of the Connes--Moscovici algebra of $\Gamma$ with the pullback of the Connes--Moscovici algebra of $\Gamma_2$. (In addition to the seminorms induced by the pullbacks we have to impose on $\Ai$ the norm of $\A$ to make the embedding $\Ai \to \A$ continuous.)

Using ideas of Jolissaint, Connes and Moscovici proved \cite[p. 385]{cm}, that for any group cocycle $\tau$ on $\Gamma_2$ of polynomial growth there is an associated reduced cyclic cocycle $c_{\tau}$ on $\bbbc\Gamma_2$ which extends to a continuous cyclic cocycle on $\Ca_{\infty}$. Here one needs property (RD). Since $\Gamma_2$ has property (PC) and a nonvanishing higher signature, there is such a $\tau$ with $c_{\tau} \ch(\ind(\D_{N_2})) \neq 0$. 
It remains to show that $(\nu_{e}-\nu_1)\# c_{\tau}$ extends to $\Ai$. 

First consider $\nu_1 \# c_{\tau}$: This is the pullback of the cyclic cocycle $c_{\tau}$ with respect to the projection $\pi_2:\bbbc\Gamma \to \bbbc \Gamma_2$. Since $c_{\tau}$ extends to the Connes--Moscovici algebra of $\Gamma_2$, the cocycle $\nu_1 \# c_{\tau}=\pi_2^*c_{\tau}$ extends to $\Ai$. 

Now consider $\nu_{e}\# c_{\tau}$: The group cocycle $\pi_2^*\tau$ on $\Gamma$ also has polynomial growth. Here $\pi_2:\Gamma \to \Gamma_2$. Hence $c_{\pi_2^*\tau}=\nu_{e} \# c_{\tau}$ extends to $\Ai$.
\end{proof}

By generalizing the methods of \cite{cw} we get the following result, which is in the spirit of results of Leichtnam and Piazza for manifolds with positive scalar curvature \cite{lpposscal}:

\begin{prop}
Assume that $N$ is a closed oriented connected odd-dimensional manifold whose fundamental group is a product $\Gamma=\Gamma_1 \times \Gamma_2$. We assume that the following conditions hold:

\begin{enumerate} 
\item $\Gamma_1$ contains a nontrivial torsion element,
\item there are $k, m \in \bbbn$ with $k \ge 2$ and $\dim N+1-m=4k$ such that $H^m(B\Gamma_2,\bbbq) \neq 0$,
\item  $\Gamma_2=\bbbz^m$ or $\Gamma_1$, $\Gamma_2$ have property (RD) and $\Gamma_2$ has in addition property (PC). 
\end{enumerate}

Then $\surg(N)$ is infinite.
\end{prop}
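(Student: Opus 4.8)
The plan is to exploit the action of $L_{n+1}(\bbbz\Gamma)$ on $\surg(N)$ together with the commuting diagram of Theorem \ref{commute}. Since $\rho^{{\mathcal S}}(a[(N,\id_N)])-\rho^{{\mathcal S}}([(N,\id_N)])=-\ch\sign^L(a)$ for every $a\in L_{n+1}(\bbbz\Gamma)$, it is enough to produce elements $a^{(j)}\in L_{n+1}(\bbbz\Gamma)$, $j\in\bbbn$, whose classes $\ch\sign^L(a^{(j)})$ become pairwise distinct after pairing with one fixed continuous cyclic cocycle on a suitable Fr\'echet completion $\Ai$ of $\bbbc\Gamma$; the elements $a^{(j)}[(N,\id_N)]$ are then pairwise distinct in $\surg(N)$. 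I would take $\A=C^*\Gamma$ (or $C^*_r\Gamma$) and choose $\Ai$ exactly as in the proof of the preceding proposition: $\A_i=C^i(T^m,C^*\Gamma_1)$ when $\Gamma_2=\bbbz^m$, and otherwise, under hypothesis (3), the intersection of the pullbacks to $\bbbc\Gamma$ of the Connes--Moscovici algebras of $\Gamma$ and of $\Gamma_2$. Note $n=\dim N=4k+m-1\ge 7$, so the results of \S\ref{compat} apply.

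First I would produce the $\Gamma_2$-factor. By hypothesis (2) there is a class $\tau\in H^m(B\Gamma_2;\bbbq)$, and by Thom's realization of rational homology classes a closed oriented $m$-dimensional manifold $N_2$ with a reference map $N_2\to B\Gamma_2$ whose higher signature associated with $\tau$ is nonzero. Exactly as in the proof of the previous proposition --- via the higher index theorem of Connes and Moscovici \cite{cm}, or directly for $N_2=T^m$ --- this produces a reduced cyclic cocycle $c_\tau$ on $\bbbc\Gamma_2$ which extends continuously to $\Ca_{\infty}$ and satisfies $c_\tau(\ch(\ind\D_{N_2}))\neq 0$. Here $m$ is even (since $n$ is odd and $n+1-m=4k$), so $\D_{N_2}$ has an index in $K_0$ and $c_\tau$ has even degree, consistently with the parities in \S\ref{prodform}.

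Next I would produce the $\Gamma_1$-factor and combine. Let $t\in\Gamma_1$ be a nontrivial torsion element of order $d\ge 2$, giving $\bbbz/d\hookrightarrow\Gamma_1$; a lens space shows a closed oriented $(4k-1)$-manifold $X_1$ with $\pi_1=\bbbz/d$ exists. By Chang and Weinberger \cite{cw} (ultimately the classical computation of $L_{4k}(\bbbz[\bbbz/d])$ together with multisignature/$\rho$-detection) there are $a_1^{(j)}\in L_{4k}(\bbbz[\bbbz/d])$, $j\in\bbbn$, such that $\sign^L(a_1^{(j)})$, paired with $\nu_e-\nu_1$, runs through infinitely many distinct values; these values are unchanged under the inclusion $\bbbz/d\hookrightarrow\Gamma_1$ since $\nu_e,\nu_1$ restrict correctly. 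Using Ranicki's product in $L$-theory and $\dim N+1=4k+m$, set $a^{(j)}$ to be the image in $L_{n+1}(\bbbz\Gamma)$ of $a_1^{(j)}\otimes\sigma^*(N_2)\in L_{4k+m}(\bbbz[(\bbbz/d)\times\Gamma_2])$ under $(\bbbz/d\hookrightarrow\Gamma_1)\times\id_{\Gamma_2}$. Because $\sign^L$ is well-defined (Theorem \ref{commute}), I may compute $\sign^L(a^{(j)})$ using the convenient realization $F_1\times\id_{N_2}$, where $F_1$ realizes $a_1^{(j)}$; a product formula for $\sign^L$ --- obtained by merging the relative-index computation of \S\ref{compat} with the superconnection arguments of \S\ref{prodform} (compare Prop. \ref{prodeta}, Theorem \ref{prodtheo}, Prop. \ref{prodhigh}) --- then gives, after pairing with the continuous cyclic cocycle $(\nu_e-\nu_1)\#c_\tau$ on $\Ai$,
$$\bigl((\nu_e-\nu_1)\#c_\tau\bigr)\bigl(\ch\sign^L(a^{(j)})\bigr)=\pm\bigl(\,(\nu_e-\nu_1)\text{-pairing of }\sign^L(a_1^{(j)})\,\bigr)\cdot c_\tau\bigl(\ch(\ind\D_{N_2})\bigr)\ .$$
The right-hand factor is a fixed nonzero number and the left factor takes infinitely many values, so the $a^{(j)}[(N,\id_N)]$ are pairwise distinct and $\surg(N)$ is infinite.

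The main obstacle is the product formula for $\sign^L$ used in the last step. Theorem \ref{prodtheo} and Prop. \ref{prodeta} are proved for $\eta$- and $\rho$-forms of homotopy equivalences of \emph{closed} Cartesian products, whereas here one needs the analogous factorization for the perturbed signature operators $\D(A(F_1\times\id_{N_2}|_{\ra})_{\alpha_0,1,\ve_0})^+$ on manifolds with boundary (cylindrical ends), i.e.\ one must combine Prop. \ref{prodhigh} with the regularized operators of \S\ref{compat}. This requires checking that $\ov{A(\cdot)}$ and the interpolation operators of \S\ref{prodform} behave correctly in the presence of a boundary, and that the relative/cobordism index arguments of \S\ref{compat} descend to the product; it is exactly the point where $N$ itself need not be a product, which is harmless precisely because $\sign^L$ depends only on the $L$-class and not on $N$. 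A secondary, purely bookkeeping point is to cite precisely the Chang--Weinberger input that the $\rho$-invariants of $\widetilde L_{4k}(\bbbz[\bbbz/d])$ (equivalently, the $\nu_e-\nu_1$-pairing of $\sign^L$) assume infinitely many values for every $d\ge 2$ and $k\ge 2$, noting that only the traces $\nu_e,\nu_1$ on $C^*\Gamma$ intervene, so no representation need be extended from $\bbbz/d$ to $\Gamma$.
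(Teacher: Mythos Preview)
Your overall strategy is right and matches the paper's, but the ``main obstacle'' you identify is one the paper simply sidesteps. You try to prove a product formula for $\sign^L$ directly, i.e.\ to factorize the index of the perturbed signature operator on a manifold with cylindrical ends. The paper never does this. Instead it observes that, by Theorem~\ref{commute} (equivalently, by the Atiyah--Patodi--Singer index theorem together with the vanishing of the local term modulo $\Oi^{<e>}\Ai$),
\[
\ch\circ\sign^L\bigl[(F\times\id_{N_2})\colon W\times N_2\to V\times N_2\bigr]=-\rho\bigl(F|_{\ra W}\times\id_{N_2}\bigr)\ .
\]
Now $F|_{\ra W}\colon\ra W\to\ra V$ is a homotopy equivalence between \emph{closed} manifolds and $N_2$ is closed, so Theorem~\ref{prodtheo} applies directly and gives
\[
\rho\bigl(F|_{\ra W}\times\id_{N_2}\bigr)=\rho(F|_{\ra W})\,\ch\bigl(\ind(\D_{N_2})\bigr)\ .
\]
No new analysis on manifolds with boundary is needed; the product formula you actually require is exactly the one already proven for $\rho$-forms of closed products. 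Your displayed identity then follows by pairing with $(\nu_e-\nu_1)\#c_\tau$ and invoking Prop.~\ref{L2prop}.

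Two smaller points. First, the paper works directly with $L_{4k}(\bbbz\Gamma_1)$: by \cite{cw} the map $\sign_{(2)}-\sign\colon L_{4k}(\bbbz\Gamma_1)\to\bbbc$ is nonzero whenever $\Gamma_1$ has torsion, so one picks $[F\colon W\to V]$ not in its kernel without passing through $\bbbz/d$ or Ranicki products. Second, having found a single $a$ with $\ch\sign^L(a)\neq 0$ already suffices for infiniteness, since $\ch\circ\sign^L$ is a homomorphism into a rational vector space and hence $\{ja\cdot[(N,\id_N)]:j\in\bbbz\}$ are pairwise distinct in $\surg(N)$; there is no need to produce the $a^{(j)}$ one at a time.
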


\begin{proof}
The definition of $\A,~\Ai$ is as in the proof of the previous proposition.

By the compatibility formula in Theorem \ref{commute} it is enough to find an element in $L_{\dim N+1}(\bbbz\Gamma)$ which is not in the kernel of the map 
$$\ch\circ \sign^L:L_{\dim N+1}(\bbbz\Gamma) \to \Oi\Ai/\ov{[\Oi\Ai,\Oi\Ai]_s + \di\Oi\Ai+\Oi^{<e>}\Ai} \ .$$

In the following we use the description of the $L$-theory groups from the proof of Theorem \ref{commute}.  
The map $\sign_{(2)}-\sign:L_{4k}(\bbbz\Gamma_1) \to \bbbc$ is nonzero \cite{cw}. Let $[F: W \to V]\in L_{4k}(\bbbz\Gamma_1)$ be an element not in the kernel of this map. Thus $\rho_{(2)}(\ra W)-\rho_{(2)}(\ra V) \neq 0$. 

Let $c \in H^m(B\Gamma_2,\bbbq)$ be nontrivial. In the following, $\Omega_*$ denotes oriented bordism. Since the map
$$\Omega_m(B\Gamma_2)\ten \bbbq \to H_m(B\Gamma_2,\bbbq) \ , ~ [f\colon M \to B\Gamma_2] \mapsto f_*[M]$$ 
is surjective (see for example \cite[p. 205]{da}), there is $[f:N_2 \to B\Gamma_2] \in \Omega_m(B\Gamma_2)$ such that $c \cap f_*[N_2] \neq 0$. Note that $c \cap f_*[N_2]$ is a higher signature of $N_2$. 
  
Then $[(F\times \id_{N_2}):W \times N_2 \to V \times N_2]$ defines an element in $L_{\dim N+1}(\bbbz\pi_1(N))$. Its image under the above map equals
\begin{align*}
\ch\circ\sign^L [(F\times \id_{N_2}):W \times N_2 \to V \times N_2] &=-\rho(F|_{\ra W}\times \id_{N_2}) \\
&=-\rho(F|_{\ra W})\ch(\ind(\D_{N_2})) \ .
\end{align*}

The last equality follows from the product formula, Prop. \ref{prodhigh}. Since $N_2$ has a nonvanishing higher signature, the term $\ch(\ind(\D_{N_2}))$ is nonzero. Furthermore, $\rho(F|_{\ra W}) \neq 0$ since $\rho_{(2)}(\ra W)-\rho_{(2)}(\ra V) \neq 0$.
\end{proof}

Our results should also yield the appropriate framework for transfering the methods in \cite[\S 2B]{pstor} from the positive scalar curvature case to surgery theory. That such a transfer should be possible was pointed out in \cite[p. 358]{pstor}. Thus, for a group $\Gamma$ with torsion one expects to detect a free abelian subgroup of higher rank in $L_{2k}(\bbbz\Gamma)$ acting freely on the structure set. The rank depends on the number of conjugacy classes of elements of finite order in $\Gamma$. Since our methods require to impose growth conditions on the conjugacy classes (as in \cite{pstor}), the result would still be less general than the predictions alluded to in the Problem on \cite[p. 317]{cw}.

\textsc{Leibniz-Archiv\\
Waterloostr. 8\\
30169 Hannover\\
Germany} 

\textsc{Email: wahlcharlotte@googlemail.com}

\end{document}